\author[A. Hammerlindl]{Andy Hammerlindl}
\address{School of Mathematical Sciences, Monash University, Victoria 3800 Australia} \urladdr{ http://users.monash.edu.au/~ahammerl/}  \email{andy.hammerlindl@monash.edu}
\title{Constructing center-stable tori}
\def\saveenum{\xdef\@savedenum{\the\c@enumi\relax}}
\def\resetenum{\global\c@enumi\@savedenum}
\newcommand{\ti}{\times}
\newcommand{\subof}{\subset}
\newcommand{\sans}{\setminus}
\newcommand{\slope}{\operatorname{slope}}
\newcommand{\graph}{\operatorname{graph}}
\newcommand{\bbR}{\mathbb{R}}
\newcommand{\bbZ}{\mathbb{Z}}
\newcommand{\bbN}{\mathbb{N}}
\newcommand{\bbT}{\mathbb{T}}
\newcommand{\bbTD}{\bbT^D}
\newcommand{\bbRd}{\bbR^d}
\newcommand{\UfU}{\overline{U} \sans f(U)}
\newcommand{\barU}{\overline{U}}
\newcommand{\bbS}{S^1}
\newcommand{\Es}{E^s}
\newcommand{\Ec}{E^c}
\newcommand{\Eu}{E^u}
\newcommand{\Ecu}{E^{cu}}
\newcommand{\Ecs}{E^{cs}}
\newcommand{\Ws}{W^s}
\newcommand{\Wu}{W^u}
\newcommand{\inv}{^{-1}}
\newcommand{\invn}{^{-n}}
\newcommand{\del}{\partial}
\newcommand{\interior}{\operatorname{int}}
\newcommand{\tf}{\tilde{f}}
\newcommand{\hF}{\hat{F}}
\newcommand{\hV}{\hat{V}}
\newcommand{\hL}{\hat{L}}
\newcommand{\ep}{\epsilon}
\newcommand{\lam}{\lambda}
\newcommand{\Lam}{\Lambda}
\newcommand{\al}{\alpha}
\newcommand{\bt}{\beta}
\newcommand{\gam}{\gamma}
\newcommand{\qandq}{\quad \text{and} \quad}
\newcommand{\dist}{\operatorname{dist}}
\newcommand{\id}{\operatorname{id}}
\newcommand{\Cone}{\mathcal{C}}
\newcommand{\Bone}{\mathcal{B}}
\newcommand{\Aone}{\mathcal{A}}
\newcommand{\iCone}{\interior \Cone}
\newcommand{\iBone}{\interior \Bone}
\newcommand{\iAone}{\interior \Aone}
\newcommand{\TxM}{T_x M}
\newcommand{\normalize}[1]{\frac{#1}{\|#1\|}}
\newcommand{\vertiii}[1]{{\left\vert\kern-0.25ex\left\vert\kern-0.25ex\left\vert #1 
\right\vert\kern-0.25ex\right\vert\kern-0.25ex\right\vert}}
\newenvironment{bigset}
{\bigl\{}
{\bigr\}}
\numberwithin{equation}{section}
\newtheorem{thm}[equation]{Theorem}
\newtheorem{cor}[equation]{Corollary}
\newtheorem{lemma}[equation]{Lemma}
\newtheorem{prop}[equation]{Proposition}
\newtheorem{addendum}[equation]{Addendum}
\theoremstyle{remark}
\newtheorem*{notation} {\textbf{Notation}}
\providecommand{\acknowledgement}{{\noindent \textbf{Acknowledgements}}\quad}
\begin{document}

\maketitle

%

\begin{abstract}
    We show that
    any weakly partially hyperbolic diffeomorphism on the 2-torus
    may be realized as the dynamics on a center-stable or center-unstable
    torus of a 3-dimensional strongly partially hyperbolic system.
    We also construct examples of center-stable and center-unstable
    tori in higher dimensions.
\end{abstract}

\section{Introduction} 

Partially hyperbolic dynamical systems have received a large amount of
attention in recent years.
These systems display a wide variety of highly chaotic behaviour
\cite{bon2011survey},
but they have enough structure to allow, in some cases, for the
dynamics to be understood and
classified \cite{chhu20XXpartially,hp20XXpartial}.

A diffeomorphism $f$ is \emph{strongly partially hyperbolic}
if there is a splitting
of the tangent bundle into three invariant subbundles
$TM = \Eu \oplus \Ec \oplus \Es$
such that the derivative $Df$ expands vectors in the unstable bundle $\Eu$,
contracts vectors in stable bundle $\Es$,
and these dominate any expansion or contraction in the center direction $\Ec$.
(See \cref{sec:ineq} for a precise, if slightly unorthodox, definition.)
The global properties of these systems are often determined
by analysing invariant foliations tangent to the subbundles of the splitting.

The bundles $\Eu$ and $\Es$ are uniquely integrable \cite{HPS}.
That is, there are foliations $\Wu$ and $\Ws$ such that any curve tangent to $\Eu$ or
to $\Es$ lies in a single leaf of the respective foliation.
For the center bundle $\Ec$, however,
the situation is more complicated.
There may not be a foliation tangent to $\Ec$,
and even if such a foliation exists, the bundle may not be uniquely
integrable.
The first discovered examples of partially hyperbolic systems
without center foliations were algebraic in nature.
In these examples,
both $f$ and the splitting can be taken as smooth,
and the center bundle is not integrable
because it does not satisfy Frobenius' condition of involutivity
\cite{W-thesis}.
Such non-involutive examples are only possible if the dimension of the center
bundle is at least two, and for a long time it was an open question
if a one-dimensional center bundle was necessarily integrable.

Rodriguez Hertz, Rodriguez Hertz, and Ures 
recently answered this question by
constructing a counterexample \cite{rhrhu2016coherent}.
They defined a partially hyperbolic system on the 3-torus
with a center bundle which is uniquely integrable everywhere
except for an invariant embedded 2-torus tangent to $\Ec \oplus \Eu$.
This discovery has shifted our view on the possible dynamics a
partially hyperbolic system can possess,
and leads to questions of how commonly invariant submanifolds
of this type
occur in general.
In this paper, we build further examples of partially hyperbolic systems
having compact submanifolds tangent either to $\Ec \oplus \Eu$ or
$\Ec \oplus \Es$, both in dimension 3 and in higher dimensions.

\medskip

In the construction in \cite{rhrhu2016coherent},
the dynamics on the 2-torus tangent to $\Ec \oplus \Eu$ is Anosov.
In fact, it is given by a hyperbolic linear map on $\bbT^2$, the cat map.
It has long been known that a
\emph{weakly partially hyperbolic} system, 
that is, a diffeomorphism $g : \bbT^2 \to \bbT^2$
with a splitting of the form $\Ec \oplus \Eu$
or $\Ec \oplus \Es$,
need not be Anosov.
Therefore, one can ask exactly which weakly partially hyperbolic systems may
be realized as the dynamics on an invariant 2-torus sitting inside a
3-dimensional strongly partially hyperbolic system.
We show, in fact, that there are no obstructions on the choice of dynamics.

\begin{thm} \label{thm:everycs}
    For any weakly partially hyperbolic diffeomorphism
    $g_0:\bbT^2 \to \bbT^2$,
    there is an embedding $i:\bbT^2 \to \bbT^3$
    and a strongly partially hyperbolic diffeomorphism
    $f:\bbT^3 \to \bbT^3$
    such that $i(\bbT^2)$ is either a center-stable or 
    center-unstable torus
    (depending on the splitting of $g_0)$
    and $i \inv \circ f \circ i = g_0$.
\end{thm}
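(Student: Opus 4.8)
The plan is to build $f$ as a perturbed skew product over a North--South circle map, realizing $g_0$ on the attracting fibre. First I would reduce to a single case. If $g_0$ carries a splitting $\Ec \oplus \Es$, replace it by $g_0\inv$, which carries a splitting $\Ec \oplus \Eu$; a diffeomorphism $f$ realizing $g_0\inv$ on a center-unstable torus has inverse $f\inv$ realizing $g_0$ on the same torus, now center-stable, since passing to the inverse interchanges $\Eu$ and $\Es$. So assume from now on that $g_0$ has a splitting $\Ec \oplus \Eu$ and aim for a center-unstable torus, whose transverse direction must then be the strong stable bundle $\Es$.

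Next I would fix an orientation-preserving diffeomorphism $\phi\selfto{\bbS}$ with exactly two fixed points, an attractor at $0$ and a repeller at $\tfrac12$, and consider maps of the form $f(x,t) = (g_t(x), \phi(t))$ on $\bbT^3 = \bbT^2 \ti \bbS$, where $g_t\selfto{\bbT^2}$ is a smooth family of weakly partially hyperbolic maps with $g_t = g_0$ for $t$ near $0$. Then $T = \bbT^2 \ti \{0\}$ is invariant with $f|_T = g_0$, so the embedding $i(x)=(x,0)$ satisfies $i\inv \circ f \circ i = g_0$. Choosing $\phi'(0)$ very small makes the vertical direction the strongest contraction at $T$, so there I set $\Es$ equal to the vertical, $\Eu = \Eu_{g_0}$, and $\Ec = \Ec_{g_0}$; thus $T$ is tangent to $\Ec \oplus \Eu$ and is center-unstable, as required. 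The difficulty is that $\bbT^3$ is closed, so the attracting torus $T$ must have a dual repeller, namely the fibre torus $T' = \bbT^2 \ti \{\tfrac12\}$, across which $\phi$ expands the vertical direction. For $f$ to be strongly partially hyperbolic near $T'$ there must be a stable bundle transverse to the vertical; since $g_0$ need not possess any contracting direction, I would arrange $g_t$ to interpolate from $g_0$ to a linear Anosov automorphism $B$ of $\bbT^2$, taking $g_t = B$ for $t$ near $\tfrac12$. Then near $T'$ the horizontal bundle $\Es_B$ supplies the stable direction while the vertical (with $\phi'(\tfrac12)$ large) becomes unstable, making $T'$ a center-stable torus.

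The main obstacle is the transition region, where the stable bundle must rotate from vertical (near $T$) to horizontal (near $T'$) while the invariant splitting $\Eu \oplus \Ec \oplus \Es$ stays continuous and strictly dominated. In a pure skew product the vertical and horizontal bundles are separately invariant, and as $\phi'(t)$ increases past the horizontal rates $\lambda_s$ and $\lambda_u$ of $B$ their rates must coincide on some fibre, destroying strict domination there. The fix, and the technical heart of the argument, is to break the product structure in the transition region by inserting off-diagonal mixing terms, so that the true invariant bundles are graphs that tilt smoothly between the vertical and horizontal directions and never cross in rate; this is the same twisting phenomenon responsible for the non-dynamical-coherence in the example of \HHU. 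I would choose $\phi$ so that the vertical stays uniformly contracting until $g_t$ has already become the Anosov map $B$, localize the mixing terms to the fibres where a bundle changes character, and verify partial hyperbolicity by exhibiting invariant cone fields and checking the rate inequalities of \cref{sec:ineq}. The remaining points---that the interpolation $g_t$ can be taken through maps with a uniformly dominated expanding bundle, and that the cone estimates close up---are where the bulk of the computation lies, but they should be routine once the rotation of the bundles is set up correctly.
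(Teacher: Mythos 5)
Your reduction to the $\Ec \oplus \Eu$ case by passing to inverses is correct (the paper leaves this step implicit), and you correctly identify that a pure skew product cannot be strongly partially hyperbolic, so that off-diagonal shearing terms are needed; those are the same mechanisms the paper uses (the Gourmelon--Potrie path of \cref{prop:isodom}, a shear, cone fields, and checking rates only on the non-wandering set). But there is a genuine gap: you never address the case where $g_0$ \emph{reverses} the orientation of its center bundle, and your design cannot handle it as written. The heart of any such construction is a non-cancellation estimate for the shear: since a smooth bump function has small derivative near the ends of its support, every wandering orbit must accumulate shear over at least two consecutive passes, and the accumulated horizontal displacement has the form $\zeta \cdot B(z) + \xi \cdot z$ with $\zeta, \xi \ge 0$ --- both coefficients nonnegative because your North--South map $\phi$ preserves orientation. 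The paper's \cref{lemma:zAz} shows such combinations can be kept uniformly far from $\Eu_B(0)$ \emph{only} when $B$ preserves the orientation of its stable bundle, and notes that this hypothesis is necessary: when the stable eigenvalue is negative, $B(z)$ and $z$ have stable components of opposite sign, the combinations can cancel, and no choice of shear vector makes the bundles rotate. This is exactly why \cref{thm:dacs} carries an orientation hypothesis and why the paper's proof of \cref{thm:everycs} gives a \emph{separate} construction in the reversing case, modifying $A \ti (-\id)$ so that the vertical dynamics reverses orientation and the shear contributions enter with alternating signs, $\zeta \cdot B(z) - \xi \cdot z$.

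A secondary underestimate: because your dual torus $T'$ is a repeller whose unstable direction is vertical, \emph{both} strong bundles must rotate through the transition region ($\Eu$ from vertical at $T'$ to horizontal at $T$, and $\Es$ from horizontal at $T'$ to vertical at $T$), so both splittings $\Eu \oplus \Ecs$ and $\Ecu \oplus \Es$ require the delicate shear analysis; moreover, rotating $\Eu$ requires the sheared vertical vectors to accumulate inside the unstable cone of the interpolation, bounded away from $\Ec$, which introduces an additional sensitivity to the orientation of the \emph{unstable} bundle of $B$. The paper's design avoids this doubling: its vertical dynamics is the identity outside a slab (so the region away from the torus has vertical center, and there is no Anosov repeller), the shear is by an integer vector --- a Dehn twist, so the map closes up to $A \ti \id$ --- and it preserves the linear unstable foliation (\cref{lemma:highu}), so $\Eu$ stays horizontal globally and only the $\Ecu \oplus \Es$ splitting needs shear estimates, where the required accumulation condition is merely that slopes tend to zero. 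These estimates, fed into \cref{thm:genchaindom}, are precisely the part you defer as ``routine.'' Finally, your stated reason for why the pure product fails (rates of $\phi'$ crossing $\lambda_s$, $\lambda_u$ on some fibre) is not the real obstruction --- pointwise rate coincidence along wandering orbits is harmless by \cref{cor:nwineq}; the obstruction is that any invariant line bundle of a product has backward limits at $T'$ forced to be horizontal or exactly vertical, which is incompatible with continuity of the splitting at both ends.
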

To be precise, a \emph{center-stable torus} is an embedded 
copy of $\bbTD$ with $D  \ge  2$
which is tangent
to $\Ecs_f := \Ec_f \oplus \Es_f$.
Similarly, a \emph{center-unstable torus}
is tangent to $\Ecu_f := \Ec_f \oplus \Eu_f$.
We also use the terms $cs$-torus and $cu$-torus as shorthand.

In the case where the derivative of $g_0$ preserves the orientation
of the center bundle, $\Ec_{g_0}$,
we may be more specific about the construction.

\begin{thm} \label{thm:dacs}
    Let $g_0:\bbT^2 \to \bbT^2$ be a weakly partially hyperbolic
    diffeomorphism which preserves the orientation of its center 
    bundle and is homotopic to a linear Anosov diffeomorphism
    $A:\bbT^2 \to \bbT^2$ and let $0 < \ep < \tfrac{1}{2}$.
    Then there is a strongly partially hyperbolic diffeomorphism
    $f:\bbT^3 \to \bbT^3$ such that
    \begin{enumerate}
        \item $f(x,t) = (A(x), t)$ for all
        $(x,t)  \in  \bbT^3$ with $|t| > \ep$,

        \item $f(x,t) = (g_0(x), t)$ for all
        $(x,t) \in \bbT^3$ with $|t| < \tfrac{\ep}{2}$, and

        \item $\bbT^2 \times 0$ is either a center-stable or center-unstable
        torus, depending on the splitting for $g_0$.
          \end{enumerate}  \end{thm}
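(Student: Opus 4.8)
The plan is to realize $f$ as a skew product over the $t$-circle that interpolates between $g_0$ near the torus and $A$ away from it. Assume first that the splitting of $g_0$ is of center-unstable type, $\Ec_{g_0}\oplus\Eu_{g_0}$; the center-stable case then follows by running the construction for $g_0\inv$ and $A\inv$ and passing to the inverse of the resulting diffeomorphism, which exchanges the roles of stable and unstable. Write $\bbT^3=\bbT^2\times\bbS$ with $t\in\bbS=\bbR/\bbZ$ the last coordinate, so that the conditions $|t|>\ep$ and $|t|<\tfrac{\ep}{2}$ refer to this circle factor.

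First I would build a smooth family $\{g_t\}_{t\in\bbS}$ of diffeomorphisms of $\bbT^2$ with $g_t=g_0$ for $|t|\le\tfrac{\ep}{2}$ and $g_t=A$ for $|t|\ge\ep$, arranged so that every $g_t$ is weakly partially hyperbolic of center-unstable type and so that the expanding bundle $\Eu_{g_t}$ and its dominated complement $\Ec_{g_t}$ vary continuously in $t$ with uniform cone estimates. Since $g_0$ is homotopic to the linear Anosov map $A$, such a path exists among diffeomorphisms; the work is to keep a uniformly dominated splitting along it. Here the hypothesis that $g_0$ preserves the orientation of $\Ec_{g_0}$ is what makes this possible: it forces the corresponding eigenvalue of $A$ to be positive, so that the center line field can be oriented consistently and $\Ec_{g_t}$ interpolates from $\Ec_{g_0}$ to the center eigendirection of $A$ without a flip. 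I would transport the uniformly expanding bundle first, inside a fixed unstable cone, and then define $\Ec_{g_t}$ as a dominated complement.

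With the family in hand, set $f(x,t)=(g_t(x),t)$. Conditions (1) and (2) are then immediate, $\bbT^2\times0$ is $f$-invariant, and $f(x,0)=(g_0(x),0)$. The derivative has the block form $Df(x,t)=\left(\begin{smallmatrix} D_xg_t & \partial_t g_t \\ 0 & 1\end{smallmatrix}\right)$, so the fiberwise bundle $\Eu_f:=\Eu_{g_t}\times 0$ is $Df$-invariant and uniformly expanded, identifying the unstable direction of $f$. The $t$-direction, by contrast, is only neutral, and because of the off-diagonal term $\partial_t g_t$ it is not itself invariant; the complementary center-stable bundle therefore tilts, and producing and splitting it is the remaining task.

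I would verify strong partial hyperbolicity in the sense of \cref{sec:ineq} by a cone-field argument. A thin unstable cone around $\Eu_f$ is forward invariant and uniformly expanded by the uniform estimates on the family, which pins down $\Eu_f$; its dominated complement is the $Df$-invariant bundle $\Ecs_f=\Ec_f\oplus\Es_f$. Inside $\Ecs_f$ the center bundle $\Ec_f$ tilts from the circle direction in the outer region $|t|>\ep$, where $f=A\times\id$ is the standard partially hyperbolic map with center tangent to the circle, to the fiber center $\Ec_{g_0}$ on the torus, where the circle direction has instead become the stable bundle $\Es_f$; this is exactly what makes $\bbT^2\times0$ a center-unstable torus. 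The main obstacle is controlling this tilt. One must choose the family $\{g_t\}$, and keep the off-diagonal term $\partial_t g_t$ small (for instance by slowing the $t$-dependence through the transition region), so that the center-stable cone contracts relative to the unstable cone throughout $\tfrac{\ep}{2}\le|t|\le\ep$ and degenerates gracefully at the torus. The genuinely delicate point is that at the torus the transverse stable direction is only neutral, so the usual stable/center comparison breaks down there; it is precisely to license this neutral transverse direction that the definition of \cref{sec:ineq} is framed as it is, and checking its inequalities across the tilt is the technical heart of the argument.
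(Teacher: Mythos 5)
Your construction $f(x,t)=(g_t(x),t)$ cannot be made strongly partially hyperbolic, and the obstruction is exactly the point you flag at the end but misdiagnose. Since every torus $\bbT^2\ti t$ is invariant and the circle coordinate never moves, $Df^n$ preserves the vertical component of every tangent vector, so $\|Df^n(v,w)\| \ge \|w\|$ for all $n$: no vector with nonzero vertical component is ever uniformly contracted, hence any candidate $\Es_f$ must be horizontal. Now take $g_0$ to be a derived-from-Anosov map of $cu$-type with a source $p$ (a legitimate input to the theorem). Then $(p,t)$ for every $|t|<\tfrac{\ep}{2}$ is a fixed point of your $f$ at which $Df$ is upper triangular with two horizontal eigenvalues of modulus at least $1$ and vertical eigenvalue $1$; there is no contracted direction at all. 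These fixed points are non-wandering, so the framework of \cref{sec:ineq} gives you nothing: \cref{cor:nwineq} only says the defining inequalities may be checked on $NW(f)$, and your invariant tori are full of non-wandering points where they fail. That definition does not ``license a neutral transverse direction''; a $cu$-torus needs its transverse direction to be genuinely stable, i.e.\ the torus must be normally attracting, which is impossible while the nearby horizontal tori are all invariant. (For the same reason no construction can satisfy item (2) with the second coordinate literally fixed; the paper's construction realizes it in the form $f(x,t)=(g_0(x),\lam t)$ for $|t|<\tfrac{\ep}{2}$, with $\lam<1$ chosen so that $\|Dg_0 v\|>2\lam$ for unit vectors $v$, which makes the vertical direction the stable bundle near the torus.)

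Repairing this requires two ideas absent from your plan. First, the circle coordinate must be contracted toward $0$: the paper sets $f(x,s)=(g(x,s)+\rho(s)\cdot z,\, h(s))$ where $h(s)=\lam s$ near $0$ and $h(s)<s$ on $(0,e)$, so $\bbT^2\ti 0$ is normally attracting. Second --- and this is the real technical heart, which your suggestion to keep $\partial_t g_t$ small points directly away from --- the bundles must rotate: far from the torus ($f=A\ti\id$) the center is vertical, while at the torus the center is horizontal and the vertical is stable. In a pure product-with-contraction, any plane containing the vertical direction is mapped to a plane containing the vertical direction, so $\Ecu_f$, which equals $\Eu_A \oplus T\bbS$ far from the torus, would accumulate on vertical-containing planes at the torus rather than on $T\bbT^2$, and the dominated splitting $\Ecu\oplus_>\Es$ could not exist (dominated splittings are continuous). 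The paper resolves this with a large integer shear $\rho(s)\cdot z$ through the band $c\le s\le d$, with $z$ chosen via \cref{lemma:zAz} --- precisely where the orientation hypothesis on the center bundle enters --- so that two iterates through the shear make every such plane nearly horizontal; the strong vertical contraction then flattens it onto the torus, and the two global dominated splittings are produced with \cref{thm:genchaindom} and upgraded to strong partial hyperbolicity with \cref{cor:nwineq}.
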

Since the construction is local in nature,
different weakly partially hyperbolic diffeomorphisms may be inserted into the
system at different places.

\begin{cor} \label{cor:manytori}
    Suppose $A:\bbT^2 \to \bbT^2$ is a hyperbolic linear automorphism
    and
    $g_i:\bbT^2 \to \bbT^2$
    for each $i  \in  \{1, \ldots , n\}$
    is a weakly partially hyperbolic diffeomorphism which is
    homotopic to $A$ and preserves the orientation
    of its center bundle.
    Let $\{t_1, \ldots , t_n\}$ be a finite subset of the circle, $\bbS$.
    Then there is a strongly partially hyperbolic diffeomorphism
    $f:\bbT^3 \to \bbT^3$ such that
    \begin{enumerate}
        \item $f(x,t_i) = (g_i(x), t_i)$ for 
        each $t_i$ and all $x  \in  \bbT^2$, and

        \item each
        $\bbT^2 \times t_i$ is either a center-stable or center-unstable
        torus, depending on the splitting for $g_i$.
    \end{enumerate}  \end{cor}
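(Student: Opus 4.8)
The plan is to run the construction of \cref{thm:dacs} once near each $t_i$, using that its modification of $(A(x),t)$ is supported in an arbitrarily thin neighborhood of the inserted torus. First I would choose $\ep$ with $0 < \ep < \tfrac{1}{2}$ small enough that the arcs $(t_i - \ep,\, t_i + \ep) \subset \bbS$ are pairwise disjoint, which is possible because $\{t_1,\dots,t_n\}$ is finite. Each $g_i$ satisfies the hypotheses of \cref{thm:dacs}: it is weakly partially hyperbolic, preserves the orientation of its center bundle, and is homotopic to the single linear map $A$. Applying that theorem and then conjugating by the rotation of the circle factor sending $0$ to $t_i$ (which fixes the background map $(A(x),t)$), I obtain for each $i$ a strongly partially hyperbolic $f_i : \bbT^3 \to \bbT^3$ with $f_i(x,t) = (A(x),t)$ for $|t - t_i| > \ep$, with $f_i(x,t) = (g_i(x),t)$ for $|t - t_i| < \tfrac{\ep}{2}$, and with $\bbT^2 \times t_i$ a $cs$- or $cu$-torus according to the splitting of $g_i$.

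Next I would glue. Define $f = f_i$ on the closed slab $\{|t - t_i| \le \ep\}$ for each $i$ and $f = (A(x),t)$ on the complement of $\bigcup_i \{|t - t_i| < \ep\}$. The slabs are pairwise disjoint, so this is unambiguous; and since $f_i = (A(x),t)$ on the open set $|t - t_i| > \ep$ while $f_i$ is smooth, the jet of $f_i$ along each boundary $|t - t_i| = \ep$ agrees to all orders with that of $(A(x),t)$, so the two prescriptions patch into a well-defined diffeomorphism $f$ of $\bbT^3$. Conclusions (1) and (2) are then immediate: at $t_i$ we have $|t - t_i| = 0 < \tfrac{\ep}{2}$, giving $f(x,t_i) = (g_i(x), t_i)$; and $f$ coincides with $f_i$ on a neighborhood of $\bbT^2 \times t_i$, so that torus is tangent to $\Ecs_f$ or $\Ecu_f$ exactly as it was for $f_i$.

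The remaining and essential point is that the glued $f$ is strongly partially hyperbolic, and this is where I expect the main difficulty. Partial hyperbolicity is certified by a $Df$-invariant dominated splitting, equivalently by an invariant cone field, and the local nature of \cref{thm:dacs} should let me take each $f_i$ to carry, outside its slab, the standard splitting of $(A(x),t)$: here $\Eu$ and $\Es$ are the expanding and contracting eigendirections of $A$ and $\Ec$ is the circle direction. Because the slabs are disjoint and $f = (A(x),t)$ between them, these local splittings glue to one continuous invariant splitting $T\bbT^3 = \Eu \oplus \Ec \oplus \Es$, and the domination inequalities hold globally because they already hold on each slab (for $f_i$) and on the intervening region (for the linear map). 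The subtlety I must confront is that this uses more than the black-box statement of \cref{thm:dacs}: I need the invariant cone field produced in its proof to coincide with the linear model near the boundary of the support. I would therefore perform the gluing at the level of the construction, rerunning the cone estimates of \cref{thm:dacs} verbatim on each disjoint slab, so that the resulting global cone field is literally that of $(A(x),t)$ away from the finitely many supports and the verification reduces to the single-insert case already handled.
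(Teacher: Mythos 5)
Your proposal is correct and is essentially the paper's own argument: the paper gives no separate proof of this corollary, only the remark that the construction of \cref{thm:dacs} is local, so the intended proof is exactly your disjoint-slab insertion with the estimates of \cref{sec:proofdacs} rerun on each slab. Your closing concern is also well placed and could be settled even more cheaply by uniqueness of dominated splittings over invariant sets: each slab and the complementary region are invariant under the glued map, so the splitting of each $f_i$ outside (and on the boundary of) its slab is forced to be the standard linear splitting of $A \ti \id$, making the glued splitting continuous and invariant, after which \cref{cor:nwineq} finishes the verification.
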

We also note that
the presence of a $cs$ or $cu$-torus affects the dynamics
only in a neighbourhood of that torus and does not place
global restrictions on the dynamics on $\bbT^3$.
For instance,
one could easily construct a system which has a $cs$ or $cu$-torus
$\bbT^2 \ti 0$ and has a robustly transitive blender
elsewhere on $\bbT^3$
\cite{bd1996persistent}.

The results as stated above rely on work announced by Gourmelon and Potrie
which shows that in the $C^1$-open set of diffeomorphisms of $\bbT^2$
with dominated splittings,
the subset of diffeomorphisms isotopic to a given hyperbolic toral
automorphism is connected.
See \cref{sec:useful} for further details about this property.

\medskip{}

The original construction of 
Rodriguez Hertz, Rodriguez Hertz, and Ures
on the 3-torus may be viewed as a skew product
with Anosov dynamics in the fibers.
In fact, the example can be given as a map of the form 
\[
    F(x,v) = (f(x), Av + h(x))
\]
where $f$ is a Morse-Smale diffeomorphism of the circle,
$A$ is the cat map on $\bbT^2$, and $h : S^1 \to \bbT^2$ is smooth.
The diffeomorphism $f$ has a sink at a point $x_0$ and the fiber $x_0 \ti \bbT^2$ over
this sink gives the embedded 2-torus tangent to $\Ecu_F$.

This description of $F$ naturally suggests a way to construct
higher-di\-men\-sion\-al examples of the same form.
We will show that, starting from any diffeomorphism $f$ of any closed manifold $M$,
one may construct a strongly partially hyperbolic diffeomorphism $F$ of $M \ti \bbTD$
using sinks of $f$ to construct center-unstable tori for $F$
and sources to construct center-stable tori.

\begin{thm} \label{thm:allsinkssimple}
    Let $f_0 : M \to M$ be a diffeomorphism
    and $X \subof M$ a finite invariant set such that
    every $x  \in  X$ is either a periodic source or sink.
    Then there is a strongly partially hyperbolic
    diffeomorphism $F : M \ti \bbTD \to M \ti \bbTD$
    of the form 
    \[
        F(x, v) = (f(x), Av + h(x))
    \]
    such that $f$ is isotopic to $f_0$ and, for each $x  \in  X$,
    the submanifold $x \ti \bbTD$ is tangent either to $\Ecs$ or $\Ecu$.
\end{thm}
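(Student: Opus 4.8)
The plan is to realize $F$ as a skew product over an isotopic copy $f$ of $f_0$, with fibrewise linear Anosov dynamics, and to choose the twisting term $h$ so that the invariant center bundle \emph{tilts out of the fibres} exactly over the points of $X$. First I would fix a linear hyperbolic automorphism $A:\bbTD \to \bbTD$ with $\dim \Es_A = \dim \Eu_A = \dim M$ (so $D = 2\dim M$, which accommodates both sinks and sources); its stable and unstable subspaces are constant, and after replacing $A$ by a power its rates are $\lam\inv < 1 < \lam$. For any smooth $h$ the derivative of $F(x,v) = (f(x),Av+h(x))$ is block lower triangular,
\[
    DF = \begin{pmatrix} Df & 0 \\ Dh & A \end{pmatrix},
\]
so the fibre subbundles $\{0\}\oplus\Es_A$ and $\{0\}\oplus\Eu_A$ are automatically $DF$-invariant with rates $\lam\inv$ and $\lam$. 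The content of the theorem is to produce a third, invariant center bundle of dimension $\dim M$ that dominates correctly and sits in the right position over $X$.

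Second, I would prepare the base map. By an isotopy supported near $X$, I would replace $f_0$ by $f$ so that near each sink $\|Df\| < \lam\inv$, near each source $\mathbf m(Df) > \lam$ (writing $\mathbf m$ for the conorm, the minimal expansion), and on the complement of a small neighbourhood $U$ of $X$ both $\|Df\|$ and $\mathbf m(Df)$ lie strictly inside the band $(\lam\inv,\lam)$. This is possible because a source or sink can be made arbitrarily strongly expanding or contracting within its isotopy class and $X$ is finite. On $M\sans U$ the intended splitting is the product one, $\Es=\{0\}\oplus\Es_A$, $\Ec=TM\oplus\{0\}$, $\Eu=\{0\}\oplus\Eu_A$, which is dominated precisely because the base rates lie in the band; here the fibres are tangent to $\Eus$, but no point of $X$ lies in this region.

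The heart of the argument is the local model near a point of $X$, and this is where I expect the main difficulty. Over a sink the base direction becomes the \emph{strongest} contraction, so the ordering of rates forces $\Es$ to rotate onto a graph over $TM$, the center bundle $\Ec$ to rotate onto the fibre direction $\Es_A$, and $\Eu$ to remain $\Eu_A$; hence the sink fibre is tangent to $\Ec\oplus\Eu=\Es_A\oplus\Eu_A$, i.e.\ to $\Ecu$. Symmetrically, over a source $\Ec$ rotates onto $\Eu_A$ and the fibre becomes tangent to $\Ecs$. The obstruction is that in the untwisted product ($h\equiv 0$) the invariant subbundles are built from the rigid constant bundles $TM\oplus\{0\}$, $\{0\}\oplus\Es_A$, $\{0\}\oplus\Eu_A$, and a continuous invariant bundle cannot interpolate between them; consequently no continuous dominated splitting can exist once the base rates cross both $\lam\inv$ and $\lam$, which is exactly the situation forced by having both a sink and a source. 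I would therefore choose $h$, supported in $U$, to \emph{couple} base and fibre and let the center bundle interpolate: concretely, I would build a nested pair of invariant cone fields --- an unstable cone $\Cone^u$ around the candidate $\Eu$ and, inside its complement, a center cone $\Cone^{cs}$ --- and verify that, for an $h$ realizing the prescribed rotation of $\dim M$-planes (from $TM$ to $\Es_A$ near sinks, from $TM$ to $\Eu_A$ near sources), $DF$ strictly preserves them with strict domination at every point of the transition region. The standard invariant-section/graph-transform argument then yields continuous invariant bundles with $\|DF|_{\Es}\| < \mathbf m(DF|_{\Ec}) \le \|DF|_{\Ec}\| < \mathbf m(DF|_{\Eu})$ and $\|DF|_{\Es}\| < 1 < \mathbf m(DF|_{\Eu})$, so that $F$ is strongly partially hyperbolic.

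Finally, I would verify the tangency claim on the fibres over $X$ directly. Each such fibre is $F$-invariant and $DF$ restricts to $A$ on it, so $\Es_A$ and $\Eu_A$ are genuine invariant subbundles there; the super-contraction (resp.\ super-expansion) of the base arranged in the second step pins the global center bundle to $\Es_A$ over a sink and to $\Eu_A$ over a source, whence $x\ti\bbTD$ is tangent to $\Ecu$ or to $\Ecs$ accordingly. Since $f$ is isotopic to $f_0$ by construction, this proves the theorem. The single delicate point is the joint choice of the strengthened base rates and the twist $h$ so that domination survives the entire rotation in the transition region around each point of $X$; once a workable local model is in hand, the global statement follows by taking $U$ to be a disjoint union of such models.
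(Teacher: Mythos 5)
Your global architecture is the same as the paper's: a skew product over a modified base, a fibre Anosov map with $\dim \Es_A = \dim \Eu_A = \dim M$, a twist $h$ supported near $X$, and a splitting that rotates from the product one into the fibre directions over $X$; you also correctly see that $h \equiv 0$ cannot work. But the step you defer --- ``an $h$ realizing the prescribed rotation'' together with strictly $DF$-invariant cone fields --- \emph{is} the theorem, and the way you set up the rates hides a real pitfall. The rotation cannot be prescribed; what must be proved is that every candidate plane (the graph of a linear map $L$ over the base direction, with $\|L\| < C$) is carried by $DF^n$ to the vertical fibre direction. In your plan the shear is supported in $U$, where the base already contracts more strongly than the fibre rate $\lam$; there the graph transform is $L \mapsto \tfrac{\lam}{a} L + \tfrac{1}{a} Dh$ with $a < \lam$, an \emph{expansion} in $L$, and iterating from a bounded $L_0$ gives $L_n = (\lam/a)^n \bigl( L_0 + (\lam - a)\inv I \bigr) + O(1)$. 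Whenever $L_0 + (\lam - a)\inv I$ is singular, the planes do \emph{not} converge to the vertical, so strict cone invariance fails for some admissible planes; and you cannot simply shrink $C$ to exclude them, because $C$ is inherited from the invariant bundles already present on the fundamental annulus and is not under your control. The paper's \cref{lemma:shearup} is built precisely to avoid this: it uses a two-stage rate structure in which the shear acts where the base still contracts at a rate $b$ with $\lam < b < 1$ --- there the graph transform $L \mapsto \tfrac{\lam}{b} L + \tfrac{1}{b} I$ is a \emph{contraction}, homogenizing every bounded plane toward $(b-\lam)\inv I$ --- and only afterwards, deeper in the basin, is the contraction strengthened to $a < \lam$, which blows these now diagonally dominant graphs up to the vertical; the ``quality''/Gershgorin bookkeeping (\cref{lemma:gershgorin,lemma:bolicity}) is what guarantees $\|L_n \inv\| \to 0$, i.e.\ genuine convergence of planes rather than mere divergence of norms. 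None of this is in your proposal, and the ordering of shear versus strengthened contraction, which you leave unspecified, is exactly where the naive choice fails.

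The second gap is in your last sentence: taking $U$ to be a disjoint union of local models does not suffice, because the invariant bundles over a point are determined by its entire orbit, and the basin of a sink can overlap the basin of a source even when the modification neighbourhoods are disjoint. After you modify $F$ near a source, the bundles $\Eu$ and $\Ecu$ over the fundamental annulus of a sink are no longer the product ones; to run the local model at that sink you must know they are still graphs over the appropriate coordinates with some uniform bound $C$ --- which is both why \cref{lemma:shearup} must handle arbitrary $C$ and why a bookkeeping device is needed. The paper handles this by an induction over the orbits in $X$ (\cref{prop:addcu}), tracking the ``graph like'' property (\cref{lemma:graphopen,lemma:verthyp}), linearizing each sink first (\cref{lemma:sinkpaste}), and invoking \cref{thm:genchaindom} at each step to extend the dominated splitting globally. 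Your proposal needs an analogous mechanism; without it, the induction implicit in ``handle each point of $X$'' does not close.
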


In dimension 3, the presence of a compact submanifold tangent to
$\Ecs$ or $\Ecu$ has strong
consequences on the global topology of the manifold.
In fact,
Rodriguez Hertz, Rodriguez Hertz, and Ures 
showed that the 3-manifold can only be one of a few possibilities
\cite{RHRHU-tori}.
The proof of \cref{thm:allsinkssimple}
uses a local argument
and the global topology of $M$ has no impact on the construction.
This suggests that in higher dimensions, compact
submanifolds tangent to $\Ecs$ and $\Ecu$ may arise naturally in many
examples of partially hyperbolic systems.

\Cref{thm:allsinkssimple} is stated for the trivial fiber bundle
$M \ti \bbTD$ only for the sake of simplicity.
As the proof is entirely local in nature,
the same technique may be used to introduce
center-stable and center-unstable tori in a system defined
on a non-trivial fiber bundle, so long as the dynamics in the fibers
is given by a linear Anosov map.
By adapting the examples in \cite{goh2015partially},
it might be possible to define a system with
a center-stable torus so that the total space is simply connected.
See also \cite{gog20XXsurgery} for further constructions,
and \cite{fg2016bundles} for conditions which imply that the fiber bundle
must be trivial.
We suspect that,
just as in the case of dimension 3,
the future study of compact center-stable submanifolds in higher dimensions
will be full of surprises.

\medskip

In order to prove the theorems listed above,
dominated splittings must be constructed in a variety of settings.
Before constructing specific examples, sections \ref{sec:ineq}
and \ref{sec:follow} first introduce a number of helpful tools in a general
setting
which give sufficient and easy-to-verify conditions for dominated splittings to
exist.
\Cref{sec:useful} establishes properties for dominated splittings in dimension 2
specifically.
\Cref{sec:proofdacs} gives the proof of \cref{thm:dacs}.
\Cref{sec:further} generalizes this construction and proves \cref{thm:everycs}.
Finally, \cref{sec:higher} handles higher-dimensional examples and proves
\cref{thm:allsinkssimple}.

\section{Splittings and inequalities} \label{sec:ineq} 

Many concepts in dynamical systems are defined by an invariant splitting
with one or more inequalities related to the splitting.
This section shows that, in many cases,
the inequalities need only be verified on the non-wandering set
of the system.
The results in this section are similar
in nature to those established in \cite{cao2003exponents}
and earlier work referenced therein.
As the proofs are short, we give them here for completeness.

Throughout this section assume $f$ is a homeomorphism of a compact metric space $M$.
Let $NW(f)$ denote its non-wandering set.

\begin{prop} \label{prop:outsideNW}
    If $U$ is a neighborhood of $NW(f)$,
    there is a uniform bound $N$ such that
    any orbit
    \begin{math}
        \{f^n(x) : n  \in  \bbZ \}  \end{math}
    has at most $N$ points lying outside of $U$.
\end{prop}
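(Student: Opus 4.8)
The plan is to reduce to a finite covering of the complement of $U$ by \emph{wandering} open sets, using the single key fact that such a set can meet any orbit at most once. First I would assume without loss of generality that $U$ is open: replacing $U$ by $\interior U$ only enlarges the complement $M \setminus U$ and so can only strengthen the conclusion. Then $K := M \setminus U$ is a closed, hence compact, subset of $M$ that is disjoint from $NW(f)$, so every point of $K$ is wandering.

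The crux of the argument is the following observation. Suppose $V$ is an open set with $f^n(V) \cap V = \emptyset$ for every $n \geq 1$. Then in fact the iterates $\{f^n(V) : n \in \bbZ\}$ are pairwise disjoint, since $f^a(V) \cap f^b(V) \neq \emptyset$ with $a > b$ would give $f^{a-b}(V) \cap V \neq \emptyset$ after applying the homeomorphism $f^{-b}$, with $a - b \geq 1$. From this, any orbit meets $V$ at most once: if $f^i(x)$ and $f^j(x)$ both lie in $V$ with $i < j$, then $f^j(x)$ lies in both $V$ and $f^{j-i}(V)$, contradicting disjointness.

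To finish, for each $y \in K$ I would use that $y$ is wandering to pick an open wandering neighborhood $V_y \ni y$, and then extract a finite subcover $V_{y_1}, \ldots, V_{y_N}$ of the compact set $K$. This $N$ is the required bound: every point of an orbit $\{f^n(x) : n \in \bbZ\}$ lying outside $U$ belongs to $K \subseteq \bigcup_{i=1}^N V_{y_i}$, and by the key observation each $V_{y_i}$ contains at most one point of the orbit, so at most $N$ orbit points can lie outside $U$. The argument has essentially no obstacle beyond the wandering-neighborhood observation; the only point requiring care is that, because orbits are two-sided, one must establish disjointness of all iterates rather than merely the forward ones.
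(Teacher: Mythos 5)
Your proof is correct, and it takes a genuinely different route from the paper's. The paper argues by contradiction using the metric structure: if no bound $N$ existed, total boundedness of $M \setminus U$ plus a pigeonhole argument would produce points $x_k \in M \setminus U$ and iterates $n_k \ge 1$ with $d(x_k, f^{n_k}(x_k)) < \tfrac{1}{k}$, and any accumulation point of $\{x_k\}$ is then non-wandering yet lies outside the interior of $U$, a contradiction. You instead work from the open-cover side of compactness: after the (correct) reduction to open $U$, you cover the compact complement $K = M \setminus U$ by finitely many wandering open sets and observe that a wandering open set meets any orbit in at most one point; your two-sided disjointness lemma is exactly the right thing to verify there, and it plays the role that the pigeonhole step plays in the paper. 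As for what each approach buys: your argument is purely topological, using no metric or total boundedness, so it works verbatim for a homeomorphism of any compact Hausdorff space, and it yields an explicit bound, namely the cardinality of the finite subcover; the paper's argument is shorter but non-constructive, producing the bound only by contradiction, though the almost-recurrent points $x_k$ it extracts are the kind of object that recurs in related arguments (they are close in spirit to chain-recurrence approximations). One small point in your favor: you handle the possibility that $U$ is not open by an explicit reduction to $\interior U$, whereas the paper leaves this implicit (its accumulation point lies in $\overline{M \setminus U} \subseteq M \setminus \interior U$, which is what the contradiction actually requires).
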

\begin{proof}
    Suppose no such $N$ exists.
    As $M \sans U$ is totally bounded,
    for any $k  \in  \bbN$,
    there is a point $x_k  \in  M \sans U$ and
    an iterate $n_k  \ge  1$
    such that
    $d(x_k$, $f^{n_k}(x_k)) < \tfrac{1}{k}$.
    The sequence $\{x_k\}$ accumulates on a non-wandering point outside
    of $U$, which gives a contradiction.
\end{proof}

A \emph{cochain} for $f$ (in the context of this section) is a collection of
continuous functions
$\al_n:X \to \bbR$ for $n  \in  \bbZ$.
The cochain is \emph{additive} if
\[
    \al_{n+m}(x) = \al_n(f^m(x)) + \al_m(x)
\]
for all $n,m  \in  \bbZ$ and $x  \in  X$.
It is \emph{superadditive} if
\[
    \al_{n+m}(x)  \ge  \al_n(f^m(x)) + \al_m(x)
\]
for all $n,m  \in  \bbZ$ and $x  \in  X$.
It is \emph{eventually positive} if there is $n_0$ such that
$\al_n$ is positive for all $n > n_0$.
Note that any positive linear combination of superadditive cochains
is again superadditive.

\begin{prop} \label{prop:superadd}
    If $\al$ is a superadditive cochain, the following are equivalent{:}
    \begin{enumerate}
        \item $\al$ is eventually positive;

        \item there is $n  \ge  1$ such that
        $\al_n(x) > 0$ for all $x  \in  M$;

        \item there is $n  \ge  1$ such that
        $\al_n(x) > 0$ for all $x  \in  NW(f)$;
    \end{enumerate}  \end{prop}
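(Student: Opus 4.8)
The plan is to establish the cycle $(1) \Rightarrow (2) \Rightarrow (3) \Rightarrow (1)$. The first two implications are immediate: eventual positivity gives $(2)$ upon taking any single index $n > n_0$, and $(2)$ restricts to $(3)$ since $NW(f) \subof M$. So all of the content lies in $(3) \Rightarrow (1)$, and here \cref{prop:outsideNW} is exactly the tool I would bring to bear.

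Assume $(3)$ and fix $n \ge 1$ with $\al_n > 0$ on $NW(f)$. Since $\al_n$ is continuous and $NW(f)$ is compact, there is a neighborhood $U$ of $NW(f)$ and a constant $c > 0$ with $\al_n \ge c$ on $U$; compactness of $M$ also gives a lower bound $\al_n \ge -b$ everywhere. Now I would iterate superadditivity to telescope $\al_n$ along the subsequence $x, f^n(x), f^{2n}(x), \dots$, obtaining
\[
    \al_{kn}(x) \ge \sum_{j=0}^{k-1} \al_n(f^{jn}(x)).
\]
The points $\{f^{jn}(x)\}$ all lie in the full $f$-orbit of $x$, so \cref{prop:outsideNW} supplies a bound $N$, independent of $x$, on how many of them can fall outside $U$. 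Splitting the sum into the (at least) $k - N$ terms bounded below by $c$ and the (at most) $N$ terms bounded below by $-b$ yields $\al_{kn}(x) \ge kc - N(c+b)$ for every $x$. As this lower bound grows linearly in $k$ and is uniform in $x$, there is $k_0$ with $\al_{kn} > 0$ on all of $M$ for every $k \ge k_0$.

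This proves positivity along multiples of $n$, and the remaining task is to upgrade it to positivity for all sufficiently large indices, as $(1)$ demands. For an arbitrary large index $m$ I would write $m = kn + r$ with $0 \le r < n$ and apply superadditivity once more,
\[
    \al_m(x) \ge \al_{kn}(f^r(x)) + \al_r(x).
\]
The finitely many functions $\al_0, \dots, \al_{n-1}$ are each bounded below on the compact space $M$, say by $-B$, so $\al_m(x) \ge \bigl(kc - N(c+b)\bigr) - B$. Since $k \to \infty$ as $m \to \infty$, the right-hand side is eventually positive uniformly in $x$, giving an $n_0$ beyond which $\al_m > 0$ on $M$. The delicate point throughout, and the step I expect to be the main obstacle, is keeping every estimate \emph{uniform in the base point} $x$: this is precisely what the uniform counting bound of \cref{prop:outsideNW} guarantees, and it is what lets the linear growth in $k$ overwhelm the fixed deficits contributed by the bounded ``error'' terms $\al_r$.
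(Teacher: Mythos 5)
Your proof is correct and takes essentially the same approach as the paper's: both rest on the uniform counting bound of \cref{prop:outsideNW}, the telescoping of superadditivity along the orbit (which the paper phrases as reducing to the additive cochain associated to a power of $f$), and the division-algorithm estimate $m = kn + r$ to pass from multiples of $n$ to all large indices. The only difference is organizational — you prove $(3) \Rightarrow (1)$ in a single pass, while the paper splits the same content into $(3) \Rightarrow (2)$ and $(2) \Rightarrow (1)$.
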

\begin{proof}
    Clearly (1) implies (2) and (2) implies (3).

    Proof of (2) implies (1){:}
    Suppose (2) holds for some $n$.
    As $\al_n$ and $\al_1$ are continuous,
    there are $\delta > 0$ and $C > 0$ such that
    $\al_n(x) > \delta$ and
    $\al_1(x) > -C$ for all $x  \in  M$.
    Write $m  \in  \bbZ$ as $m = q n + r$ with
    $q  \in  \bbZ$ and $0  \le  r < n$.
    Then
    \begin{math}
        \al_m(x)  \ge  q \delta - C n.
    \end{math}
    If $m$ is sufficiently large and positive, then so is $q \delta - C n$. 

    Proof of (3) implies (2){:}
    First, note that if $\al$ is a superadditive cochain
    for $f$ and $k  \ge  1$,
    then $\bt_n = \al_{nk}$ defines a superadditive cochain for $f^k$.
    Therefore,
    we may assume $\al_1(x) > 0$ for all $x  \in  NW(f)$.
    Next, if $\gam$ is the unique additive cochain with $\gam_1 = \al_1$,
    then $\al_n  \ge  \gam_n$ for all $n  \ge  1$.
    Therefore,
    we may assume $\al$ is additive.
    Let $\ep > 0$ be small enough that
    \begin{math}
        U := \{ x  \in  M : \al_1(x) > \ep \}
    \end{math}
    is a neighborhood of $NW(f)$.
    Let $N$ be the bound in \cref{prop:outsideNW},
    and
    let $C$ be such that $\al_1(x) > -C$ for all $x  \in  M$.
    Then $\al_m(x) > \ep(m-N) - C N$
    for all $m$ and $x$.
    Thus, for large $m$, $\al_m$ is positive.
\end{proof}
For a linear operator, $A$, between normed vector spaces,
the norm $\|A\|$ and conorm $m(A)$ are defined by
\[
    \|A\| = \sup \{\|A v\|: \|v\| = 1 \}
    \qandq
    m(A) = \inf \{\|A v\|: \|v\| = 1 \}.
\]
If $f$ is a diffeomorphism and $E \subof TM$ is
a continuous invariant subbundle,
then each of
$\log m(Df^n|_{E(x)})$
and
$-\log \|Df^n_{E(x)}\|$
defines a superadditive cochain.
We formulate a number of dynamical concepts
in terms of linear combinations of such cochains.
Here, all bundles considered are non-zero.
\begin{enumerate}
    \item An invariant subbundle $E$ is \emph{expanding} if
    \[        \log m(Df^n|_{E(x)})  \]
    is eventually positive.

    \item An invariant subbundle $E$ is \emph{contracting} if
    \[        -\log \|Df^n_{E(x)}\|  \]
    is eventually positive.

    \item An invariant splitting $\Eu \oplus \Es$ is \emph{dominated} if
    \[        \log m(Df^n|_{\Eu(x)}) - \log \|Df^n|_{\Es(x)}\|  \]
    is eventually positive.
    Write $\Eu \oplus_> \Es$ to indicate the direction
    of the domination.

    \item An invariant splitting $\Eu \oplus \Es$ is \emph{absolutely dominated} if
    there is a constant $c  \in  \bbR$ such that both
    \[        \log m(Df^n|_{\Eu(x)}) - c n
        \qandq
        c n - \log \|Df^n|_{\Es(x)}\|  \]
    are eventually positive.

    \item A dominated splitting
    $\Eu \oplus_> \Es$
    is \emph{hyperbolic} if
    $\Es$ is contracting
    and
    $\Eu$ is expanding.

    \item A dominated splitting is
    \emph{weakly partially hyperbolic} if it is either
    of the form
    $\Ec \oplus_> \Es$ with $\Es$ contracting
    or
    $\Eu \oplus_> \Ec$ with $\Eu$ expanding.

    \item An invariant splitting
    $\Eu \oplus \Ec \oplus \Es$
    is 
    \emph{strongly partially hyperbolic} if both
    $(\Eu \oplus \Ec) \oplus_> \Es$ and
    $\Eu \oplus_> (\Ec \oplus \Es)$ are dominated splittings,
    $\Es$ is contracting,
    and
    $\Eu$ is expanding.

    \item For $r  \ge  1$,
    a strongly partially hyperbolic splitting is
    $r$-\emph{partially hyperbolic} if
    both
    \[        \log m(Df^n|_{\Eu(x)}) - r \log \|Df^n|_{\Ec(x)}\|  \]
    and
    \[        r \log m(Df^n|_{\Ec(x)}) - \log \|Df^n|_{\Es(x)}\|  \]
    are eventually positive.\\
    Sometimes,
    one also requires that $f$ is a $C^r$ diffeomorphism
    \cite{HPS}.

    \item A strongly partially hyperbolic splitting is
    \emph{center bunched}
    if both
    \[        \log m(Df^n|_{\Eu(x)})
        - \log \|Df^n|_{\Ec(x)}\|
        + \log m(Df^n|_{\Ec(x)})  \]
    and 
    \[        - \log \|Df^n|_{\Ec(x)}\|
        + \log m(Df^n|_{\Ec(x)})
        - \log \|Df^n|_{\Es(x)}\|  \]
    are eventually positive.
\end{enumerate}
\begin{cor} \label{cor:nwineq}
    Let $f$ be a diffeomorphism on a compact manifold.
    For an invariant splitting,
    any of the properties listed above holds on all of $M$
    if and only if the property holds on the non-wandering set.
\end{cor}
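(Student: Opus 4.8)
The plan is to unwind each listed property into the eventual positivity of a finite collection of superadditive cochains and then invoke \cref{prop:superadd} one cochain at a time. The forward direction is immediate: since $NW(f) \subof M$, positivity of a cochain on all of $M$ for large $n$ forces positivity on $NW(f)$ for large $n$, so any property holding on $M$ restricts to one holding on the non-wandering set.

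For the converse, I would first recall that for a continuous invariant subbundle $E$ both $\log m(Df^n|_{E(x)})$ and $-\log \|Df^n|_{E(x)}\|$ are superadditive cochains, and that a positive linear combination of superadditive cochains is again superadditive. Hence each defining expression for \emph{expanding}, \emph{contracting}, \emph{dominated}, $r$-\emph{partially hyperbolic}, and \emph{center bunched} is a single superadditive cochain, while \emph{hyperbolic}, \emph{weakly partially hyperbolic}, and \emph{strongly partially hyperbolic} are finite conjunctions of such conditions. Now if a superadditive cochain $\al$ is eventually positive on $NW(f)$, then in particular there is some $n \ge 1$ with $\al_n > 0$ everywhere on $NW(f)$, which is exactly condition (3) of \cref{prop:superadd}; the proposition then gives condition (1), that $\al$ is eventually positive on all of $M$. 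Applying this to each cochain attached to a given property carries that property from $NW(f)$ to $M$.

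The only case needing extra care is \emph{absolute domination}, since it asserts the \emph{existence} of a constant $c$. Here I would note that, for a fixed $c$, both $\log m(Df^n|_{\Eu(x)}) - cn$ and $cn - \log \|Df^n|_{\Es(x)}\|$ remain superadditive cochains, because adding or subtracting the additive cochain $n \mapsto cn$ preserves superadditivity. Thus a constant $c$ witnessing absolute domination on $NW(f)$ produces, via the argument of the previous paragraph applied to these two cochains with that same $c$ held fixed, the same constant $c$ witnessing absolute domination on all of $M$. I expect the main (and only) obstacle to be this bookkeeping point: verifying that each property really is a finite conjunction of eventual-positivity statements, and that the auxiliary constant in the absolutely dominated case can be kept uniform across $M$ and $NW(f)$. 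With those checks made, the corollary is an immediate consequence of \cref{prop:superadd}.
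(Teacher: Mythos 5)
Your proposal is correct and follows exactly the route the paper intends: the corollary is stated without a separate proof precisely because each listed property is, as you observe, a finite conjunction of eventual-positivity conditions for superadditive cochains, so \cref{prop:superadd} applies cochain by cochain, with the forward direction being trivial restriction. Your explicit treatment of the absolutely dominated case is the only point the paper leaves tacit, and you resolve it correctly by noting that $n \mapsto cn$ is an additive cochain, so superadditivity is preserved and the same constant $c$ transfers from $NW(f)$ to all of $M$.
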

Since the log of the Jacobian of $Df^n|_{E(x)}$
defines an additive cochain,
one could also establish similar results for volume partial hyperbolicity
as studied in \cite{bdp2003dichotomy}.
Further, the techniques in \cite{cao2003exponents}
show that all of these properties hold uniformly if and only if
they hold in a non-uniform sense on all invariant measures.

\section{Splittings from sequences} \label{sec:follow} 

Here we present
what are hopefully ``user-friendly''
techniques to prove the existence
of a dominated splitting.
The techniques here are similar in form to results developed
by Ma\~n\'e to study quasi-Anosov systems
\cite[Lemma 1.9]{mane1977quasi},
by Hirsch, Pugh, Shub in regards to normally hyperbolicity
\cite[Theorem 2.17]{HPS},
and by Franks and Williams in constructing non-transitive Anosov flows
\cite[Theorem 1.2]{fw1980anomalous}.

This section uses $\Eu$ and $\Es$ to denote the bundles of a dominated
splitting, even though the splitting may not necessarily be
uniformly hyperbolic.
It is far easier, at least for the author,
to remember that $\Eu$ dominates $\Es$
than
to remember which of, say, $E^1$ and $E^2$
dominates the other.

\begin{notation}
    For a non-zero vector $v  \in  TM$
    and $n  \in  \bbZ$,
    let $v^n$ denote the unit vector
    \[
        v^n = \normalize{Df^n v}.
    \]
    Of course, $v^n$ depends on the diffeomorphism $f:M \to M$
    being studied, so this notation is used only when the
    $f$ under study is clear.
\end{notation}
\begin{thm} \label{thm:chaindom}
    Suppose $f$ is a diffeomorphism
    of a closed manifold $M$ and
    $Z$ is an invariant subset which contains all
    chain-recurrent points and has
    a dominated splitting
    \[
        T_Z M = \Eu \oplus \Es
          \]
    with $d = \dim \Eu$.
    Suppose that for every $x  \in  M \sans Z$,
    there is a point $y$ in the orbit of $x$
    and a subspace $V_y$ of dimension $d$ such that
    for any non-zero $v  \in  V_y$, each of the sequences
    $v^{n}$ and $v^{-n}$ accumulates on $\Eu$ as $n \to \infty$.
    Then,
    the dominated splitting on $Z$
    extends to a dominated splitting on all of $M$.
\end{thm}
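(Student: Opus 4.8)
The plan is to reduce the statement to the single task of producing a continuous, $Df$-invariant splitting $TM = \Eu \oplus \Es$ on all of $M$ that restricts to the given one on $Z$. Once such a splitting is in hand the conclusion is immediate from \cref{cor:nwineq}: the non-wandering set $NW(f)$ is contained in the chain-recurrent set and hence in $Z$, the splitting is dominated on $Z$, so it is dominated on $NW(f)$, and \cref{cor:nwineq} upgrades this to domination on all of $M$. Equivalently, one applies the implication (3)$\Rightarrow$(1) of \cref{prop:superadd} to the superadditive cochain $\log m(Df^n|_{\Eu(x)}) - \log \|Df^n|_{\Es(x)}\|$, which is well defined and superadditive precisely because the extended bundles are continuous and invariant. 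Thus everything rests on constructing the extension and checking its continuity.

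To extend the unstable bundle I would use the hypothesis directly. For $x \in M \sans Z$, pick the orbit point $y$ and the $d$-plane $V_y$ supplied by the hypothesis, set $\Eu(y) := V_y$, and propagate by the derivative, $\Eu(f^k(y)) := Df^k(V_y)$ for $k \in \bbZ$; this makes $\Eu$ invariant along the orbit. The condition that the forward and backward iterates of each nonzero $v \in V_y$ accumulate on $\Eu|_Z$ is exactly the statement that, in the Grassmannian of $d$-planes, $\Eu(f^k(y))$ lies on the ``unstable side'' of the fixed section $\Eu|_Z$; this is the restrictive part of the hypothesis, since under backward iteration $\Eu$ is repelling and a generic $d$-plane would flee it. For the stable bundle I would argue dually. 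Since $NW(f) \subof Z$, \cref{prop:outsideNW} supplies a neighborhood $U \supset Z$ and a uniform $N$ so that the entire orbit of $x$ lies in $U$ except for at most $N$ iterates; in particular there is a time $T$ after which the forward orbit stays in $U$, where domination on $Z$ provides a backward-contracting stable cone field $\Cone^s$. Intersecting the backward images of $\Cone^s$ along the forward tail yields a single plane at $f^T(x)$, and applying the remaining finitely many inverse derivatives defines $\Es(x)$. By construction both bundles are invariant and restrict to the given ones on $Z$.

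The main work, and the main obstacle, is continuity of the extended bundles on all of $M$ --- especially at points of $Z$ itself, where $\Eu$ and $\Es$ were defined by two different limiting procedures. The key tool here is again \cref{prop:outsideNW}: because every orbit spends at most $N$ steps outside $U$, the return time into the region where the cone fields contract is uniformly bounded, so the attraction of $d$-planes toward $\Eu$ (respectively of $\dim\Es$-planes toward $\Es$) is uniform across all orbits. This uniformity gives equicontinuity of the extended sections and forces $\Eu(x_k) \to \Eu(z)$ and $\Es(x_k) \to \Es(z)$ whenever $x_k \to z \in Z$, matching the original bundles there. One must also check that $\Eu(x)$ and $\Es(x)$ are transverse for $x \in M \sans Z$: this is where the two-sided accumulation in the hypothesis is essential, since it places $\Eu(x)$ strictly inside the unstable cone $\Cone^u$ while the dual construction places $\Es(x)$ strictly inside $\Cone^s$, and these cones meet only at the origin by domination.

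With continuity and transversality established, $TM = \Eu \oplus \Es$ is a continuous invariant splitting on $M$ agreeing with the dominated splitting on $Z \supset NW(f)$, and \cref{cor:nwineq} finishes the proof. I expect the continuity estimate at $Z$ to be the delicate step: one has to convert the qualitative ``accumulates on $\Eu$'' hypothesis, together with the bounded-excursion count from \cref{prop:outsideNW}, into a genuinely uniform rate of convergence in the Grassmannian, and then rule out that a sequence of orbits approaching $Z$ carries planes converging to the wrong limit.
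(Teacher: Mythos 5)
Your overall reduction is sound as far as it goes: if you can produce a \emph{continuous} $Df$-invariant splitting of $TM$ extending the one on $Z$, then \cref{cor:nwineq} (via \cref{prop:superadd}) does upgrade domination on $NW(f) \subof Z$ to domination on all of $M$. That final step genuinely differs from the paper, which never argues this way and instead obtains domination directly from a strictly invariant cone field (\cref{lemma:boneincone}). Your pointwise definitions are also the correct ones; the paper itself remarks that in its construction $\Eu(x) = V_x$ on the relevant fundamental domain, and your dual-cone definition of $\Es$ and the transversality argument are fine. The trouble is that the entire content of the theorem is concentrated in the step you defer to the end: continuity of the assembled bundles.

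Here is the gap, concretely. The hypothesis gives only \emph{subsequential} accumulation of $v^n$ and $v^{-n}$ on $\Eu$, with no rate, and it gives no regularity whatsoever of the assignment $x \mapsto V_x$ (continuity of $V$ is not assumed, and is in fact a \emph{consequence} of the theorem, not an available ingredient). So your claim that \cref{prop:outsideNW} yields ``uniform attraction'' and hence equicontinuity does not hold: the bounded excursion count controls how long orbits spend outside a neighborhood $U$ of $Z$, but it gives no control on the time at which the planes $Df^k(V_{y})$ first enter an invariant cone around $\Eu|_Z$. Along a sequence $x_k \to z \in Z$, that entry time is unbounded and can even exceed the time needed to travel from $y_k$ to $x_k$, in which case the plane you have named $\Eu(x_k)$ is nowhere near any cone at $x_k$ and nothing forces $\Eu(x_k) \to \Eu(z)$; an argument presupposing uniform rates here is circular. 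The paper's proof is engineered precisely to sidestep this: it uses Conley's theorem to filter $M$ into pieces $\Lam[a_i,b_i]$, pushes cone fields forward from the attractor and backward from the repeller of each piece (\cref{prop:attractorsplit}), and then argues by compactness and contradiction --- if no pair of pushed cone fields $\Bone_m$, $\Cone_n$ can be glued by \cref{lemma:boneincone}, there is a single unit vector $u \in \bigcap_{m,n} \Bone_m(y) \sans \interior \Cone_n(y)$, and the identity $\bigcap_m \Bone_m(y) = V_y$ together with the forward-accumulation hypothesis contradicts $u \notin \interior\Cone_n(y)$. This converts the rate-free, subsequential hypothesis into an invariant cone field (whence existence, continuity, and domination all at once) without ever establishing the uniform convergence your outline requires. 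To repair your proof you would essentially have to reproduce this cone-field mechanism; the direct equicontinuity route is not available from the hypotheses.
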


A key step in proving the theorem is the following

\begin{prop} \label{prop:attractorsplit}
    Let $f:M \to M$ be a diffeomorphism,
    $\Lam$ a compact invariant subset,
    and let $U \subof \Lam$ be open in the topology of $\Lam$
    such that
    \begin{enumerate}
        \item $f(U)$ is compactly contained in $U$,

        \item each of
        \[            \bigcap_{n>0} f^n(U)
            \qandq
            \bigcap_{n>0} \Lam \sans f \invn(U)  \]
        has a dominated splitting with
        \begin{math}
            d = \dim \Eu,  \end{math}
        and

        \item for each $x  \in  \UfU$
        there is a $d$-dimensional subspace $V_x$
        such that for all $0  \ne  v  \in  V_x$,
        both
        $v^n$ and $v^{-n}$
        accumulate on $\Eu$ as $n \to \infty.$
    \end{enumerate}
    Then,
    there is a dominated splitting on all of $\Lam$.
\end{prop}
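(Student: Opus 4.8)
The plan is to produce a continuous invariant splitting $\Eu\oplus\Es$ on all of $\Lam$ which restricts to the given dominated splittings on the attractor and repeller associated to $U$, and then to upgrade mere continuity to domination by checking the defining inequality only on the non-wandering set, via \cref{cor:nwineq}. First I would set $A=\bigcap_{n>0}f^n(U)$ and $R=\bigcap_{n>0}(\Lam\setminus f\invn(U))$. Since $f(U)$ is compactly contained in $U$, the set $U$ is a trapping region: $A$ is the maximal invariant set inside $U$ and $R$ is its dual repeller, both compact and invariant, and each carries a dominated splitting with $\dim\Eu=d$ by hypothesis (2). The key topological observation is that every $x\in\Lam\setminus(A\cup R)$ lies on a connecting orbit meeting the fundamental domain $\UfU$ exactly once: its forward orbit is eventually trapped in $U$ and converges to $A$, while its backward orbit leaves $U$ and accumulates on $R$. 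Because $\overline{f(U)}\subof U$, such points are wandering, so $NW(f|_\Lam)\subof A\cup R$; this is the set on which we already control the dynamics.

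Next I would build the two bundles. On $A$ and $R$, take $\Eu$ and $\Es$ to be the given dominated splittings. Along a connecting orbit through $\xi\in\UfU$, define $\Eu$ by transporting the subspace $V_\xi$ from hypothesis (3), that is $\Eu(f^k\xi)=Df^k V_\xi$; this is invariant and $d$-dimensional by construction, and the accumulation of $v^{n}$ and $v^{-n}$ on $\Eu$, for $v\in V_\xi$, is exactly what forces $\Eu$ to extend continuously to $A$ (as $k\to+\infty$) and to $R$ (as $k\to-\infty$). For $\Es$, I would instead use backward iteration of a stable plane field near $A$: extend $\Es_A$ to a continuous field $\Sig$ of $(\dim M-d)$-planes transverse to $\Eu$ on a neighborhood of $A$ in $\Lam$, and set $\Es(x)=\lim_{m\to\infty}Df\invm(\Sig_{f^m x})$ for every $x$ whose forward orbit enters this neighborhood (that is, on $A$ together with the connecting orbits), while keeping $\Es=\Es_R$ on $R$. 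Domination on $A$ makes this limit exist and depend continuously on $x$, so $\Es$ agrees with $\Es_A$ on $A$; domination on $R$ forces backward-iterated transverse planes to converge to $\Es_R$, so $\Es$ matches $\Es_R$ at the repeller as well. Complementarity $\Eu\oplus\Es=T_\Lam M$ holds on $A$ and $R$, and along each connecting orbit it follows because pushing both $V_\xi$ and $\Es(\xi)$ forward into $A$ yields the transverse limits $\Eu_A$ and $\Es_A$, and transversality is preserved by $Df$.

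Finally, the resulting splitting is continuous and invariant on all of $\Lam$ and coincides with the given dominated splittings on $A\cup R$, which contains $NW(f|_\Lam)$. Hence the superadditive cochain $\log m(Df^n|_{\Eu(x)})-\log\|Df^n|_{\Es(x)}\|$ is positive for large $n$ on the non-wandering set, and \cref{cor:nwineq} (applying \cref{prop:superadd} to $f|_\Lam$) promotes this to eventual positivity on all of $\Lam$, which is precisely the statement that $\Eu\oplus\Es$ is dominated.

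I expect the main obstacle to be the gluing step of the second paragraph: proving that the bundle defined by transporting $V_\xi$ and the bundle defined by the backward-limit construction are genuinely continuous across the wandering connecting region and stay complementary there. In the fundamental domain neither the attractor nor the repeller estimates are available, so one must rely entirely on hypothesis (3) to keep the unstable direction from degenerating, and some care is needed either to choose $V_\xi$ continuously in $\xi$ or to recover continuity within the connecting set from the accumulation property itself. Once the splitting is shown to be a bona fide continuous invariant splitting, the reduction to the non-wandering set is the clean and routine endgame.
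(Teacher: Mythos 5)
You have correctly identified the attractor--repeller geometry ($A$, $R$, and wandering connecting orbits, so that $NW(f|_\Lam)\subof A\cup R$), and your endgame is in the spirit of Section 2 of the paper. But there is a genuine gap at exactly the step you flag as "the main obstacle," and it cannot be deferred: the \emph{continuity} of the glued splitting. Hypothesis (3) asserts only the existence, at each $x\in\UfU$, of some subspace $V_x$; nothing says $x\mapsto V_x$ can be chosen continuously, and your $\Es$ is a pointwise limit of pulled-back plane fields whose uniform convergence (especially at the seam along $R$) you do not establish. This is not a technicality, because your final step collapses without it: \cref{cor:nwineq} rests on \cref{prop:superadd}, whose cochains $\al_n$ must be \emph{continuous} functions --- continuity is what lets compactness convert positivity on $NW(f)$ into the open neighborhood $U=\{\al_1>\ep\}$ and thence into a uniform bound. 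For merely invariant, possibly discontinuous splittings, the reduction to the non-wandering set is simply false: along a single connecting orbit one may transport a badly chosen $d$-plane (say, one lying in strongly contracted directions), obtaining an invariant splitting that agrees with the given dominated splittings on $A\cup R\supof NW(f|_\Lam)$ yet is not dominated. So until continuity (equivalently: uniqueness of $V_x$ together with uniform convergence of all your limits) is proved, the argument proves nothing --- and that is the actual content of the proposition.

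The paper avoids pointwise construction of the bundles altogether by working with cone fields, which package continuity and domination simultaneously. It takes strictly invariant cone fields $\Cone_0$ near $A$ and $\Bone_0$ near $R$ (\cref{lemma:coneprops}), iterates them to nested families $\Cone_n(x)=Df^n(\Cone_0(f^{-n}(x)))$ and $\Bone_m$, and proves a gluing lemma (\cref{lemma:boneincone}): if on the fundamental domain some $\Bone_m$ sits inside $\interior\Cone_n$, then interpolating the defining quadratic forms yields a single strictly $Df$-invariant cone field on all of $\Lam$, hence a (continuous) dominated splitting. The required inclusion is then forced by compactness: if it failed for every $m,n$, there would be $y\in\UfU$ and a unit vector $u\in\bigcap_{m,n}\bigl(\Bone_m(y)\sans\interior\Cone_n(y)\bigr)$; the identity $\bigcap_m\Bone_m(y)=V_y$ shows $u\in V_y$, and hypothesis (3) then drives $u^{n_j}$ into $\Cone_0$, contradicting $u\notin\interior\Cone_n(y)$ for all $n$. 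Note this uses hypothesis (3) only through convergent subsequences and never needs $V_x$ to vary continuously. To salvage your route you would in effect have to reprove these same facts --- that $V_x=\bigcap_m\Bone_m(x)$ is unique, that the nested cones shrink uniformly over the compact fundamental domain because backward orbits enter any neighborhood of $R$ at a uniform time, and that the seams at $A$ and $R$ are controlled by the cones --- at which point you will have reconstructed the paper's proof.
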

From the proof,
it will be evident that 
if $x  \in  \overline{U} \sans f(U)$,
then $\Eu(x) = V_x$
in the resulting dominated splitting on $M$.
Therefore, it is not immediately clear
how applying
\cref{thm:chaindom} or \cref{prop:attractorsplit}
would compare favorably
to constructing a dominated splitting directly.
Still,
there are a number of advantages.
First,
only $\Eu$ needs to be known, not $\Es$,
and only on a single fundamental domain
where, depending on $f$, it may be easy to define.
Next,
to verify the hypotheses,
one need only consider individual convergent subsequences
rather than an entire cone field or splitting.
Finally, 
as long as one already knows that the original splitting on $Z$ is
dominated,
there are no further inequalities to verify.

While cone fields do not appear in the statement of
\cref{prop:attractorsplit},
they are needed for its proof.
We follow the conventions given in
\cite[Section 2]{cropot2015lecture}.
If $\Lam \subof M$ and $\Cone \subof T_\Lam M$ is a cone field,
then for each $x  \in  \Lam$,
the cone $\Cone(x)$ at $x$ is of the form
\[
    \Cone(x) = \{ v  \in  T_x M : Q_x(v)  \ge  0 \}.
\]
where $Q_x$ is a non-positive, non-zero quadratic form
which depends continuously on $x  \in  \Lam$.
The \emph{interior} of $\Cone(x)$ is
\[
    \iCone(x) := \{0\} \cup \{ v  \in  T_x M : Q_x(v) > 0 \}
\]
and the \emph{dual cone} is
\[
    \Cone^*(x) := \{ v  \in  T_x M : \, -Q_x(v)  \ge  0 \}.
\]
\begin{lemma} \label{lemma:coneprops}
    Let $\Lam \subof M$ be an invariant set with
    a dominated splitting
    \begin{math}
        T_\Lam M = \Eu \oplus \Es.
    \end{math}
    Then there is a neighborhood $U$ of $\Lam$ and a cone field
    $\Cone$ defined on $U$ 
    such that
    \begin{enumerate}
        \item if a sequence $\{v_k\}$ of unit vectors in TM
        converges to $v  \in  \Eu,\\$
        then $v_k  \in  \Cone$ for all large positive $k$;

        \item if $x$, $f(x)  \in  U$,
        then
        $Df(\Cone(x)) \subof \iCone(f(x))$;

        \item if $x  \in  M$ and $N  \in  \bbZ$ are such that $f^{-n}(x)  \in  U$ for all $n > N$,
        then
        \[            \bigcap_{n > N} Df^n \bigl( \Cone(f^{-n}(x)) \bigr)  \]
        is a subspace of $\TxM$ with
        the same dimension as $\Eu$;

        \item if $x  \in  M$ and $N  \in  \bbZ$ are such that $f^{n}(x)  \in  U$ for all $n > N$,
        then
        \[            \bigcap_{n>N} Df^{-n} \bigl(\Cone^*(f^n(x)) \bigr)  \]
        is a subspace of $\TxM$ with the same dimension as $\Es$.

        \item the subspaces given by (3) and (4) define an extension
        of the dominated
        splitting to all of \, 
        \begin{math}
            \bigcap_{n  \in  \bbZ} f^n(U).
        \end{math}  \end{enumerate}  \end{lemma}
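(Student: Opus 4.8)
The plan is to build $\Cone$ from a single quadratic form adapted to the splitting and then to read off properties (1)--(5) from strict cone invariance together with the graph-transform contraction that domination provides. First I would record that domination is really a one-step condition after adjusting the metric: by \cref{prop:superadd} and \cref{cor:nwineq} the eventual positivity of $\log m(Df^n|_{\Eu(x)}) - \log\|Df^n|_{\Es(x)}\|$ is uniform, so there is $N$ with $m(Df^N|_{\Eu(x)}) > \|Df^N|_{\Es(x)}\|$ for every $x \in \Lam$, and a standard averaging argument then yields a Riemannian metric adapted to the domination in which $\|Df|_{\Es(x)}\| < m(Df|_{\Eu(x)})$ for all $x\in\Lam$ (see \cite{cropot2015lecture}). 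I would extend the continuous, now no longer invariant, subbundles $\Eu,\Es$ from $\Lam$ to a neighborhood $U_0$, let $\piu,\pis$ be the associated projections, and set $Q_x(v) = \|\piu v\|^2 - \|\pis v\|^2$. On $\Lam$ this has signature $(d,\dim\Es)$, is positive on $\Eu\sans 0$ and negative on $\Es\sans 0$, depends continuously on $x\in U_0$, and defines the cone field $\Cone$.

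Properties (1) and (2) are then short. For (1), a unit $v\in\Eu(x)$ has $Q_x(v)=1>0$, so $v\in\iCone(x)$, and continuity of $Q$ propagates $Q>0$ to all nearby unit vectors, hence to the tail of any sequence $v_k\to v$. For (2), decompose $v\in\Cone(x)\sans 0$ on $\Lam$ as $v=v^u+v^s$ with $\|v^s\|\le\|v^u\|$; one-step domination gives $\|Df v^s\| \le \|Df|_{\Es}\|\,\|v^s\| < m(Df|_{\Eu})\,\|v^u\| \le \|Df v^u\|$, and since $Df$ preserves the splitting on $\Lam$ this reads $Q_{f(x)}(Df v) = \|Df v^u\|^2 - \|Df v^s\|^2 > 0$. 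Thus on the compact set of unit vectors of $\bigcup_{x\in\Lam}\Cone(x)$ the continuous function $(x,v)\mapsto Q_{f(x)}(Df v)$ is bounded below by some $\delta>0$; shrinking $U_0$ to a neighborhood $U$ of $\Lam$, the same strict inequality survives for all $x,f(x)\in U$, which is exactly (2).

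The heart is (3) (with (4) dual). Iterating (2) along the backward orbit shows the cones $C_n := Df^n(\Cone(f^{-n}(x)))$ are nested, $C_{n+1}\subof C_n$, and in fact $C_n\subof\iCone(x)$ once $n\ge 2$. The engine is the graph-transform description: a $d$-plane $P\subof\Cone(y)$ is the graph of a linear map $\Eu(y)\to\Es(y)$ of norm $\le 1$, and domination makes the induced action $P\mapsto Df(P)$ a contraction of the space of such planes with factor $\kappa<1$ that, being strictly less than $1$ on $\Lam$, stays $<1$ on $U$. Hence the compact, nested family of $d$-planes contained in $C_n$ has Grassmannian diameter $\le\kappa^n\cdot\text{const}\to 0$ and collapses to a single $d$-plane $E^u(x)$. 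To see $\bigcap_n C_n=E^u(x)$ and not something larger, note every nonzero $w\in C_n$ has $Q(w)>0$ for $n\ge 2$, so $w$ extends to a positive-definite $d$-plane inside $C_n$, which lies within $\kappa^n\cdot\text{const}$ of $E^u(x)$; letting $n\to\infty$ forces $w\in E^u(x)$. Property (4) is the mirror statement for $f^{-1}$, using that $Df^{-1}$ carries $\Cone^*$ strictly into $\iCone^*$ (dual cone invariance, which follows from (2) as $Q$ is non-degenerate) together with the corresponding $(\dim\Es)$-dimensional graph transform.

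Finally, on $\bigcap_{n\in\bbZ}f^n(U)$ both orbits remain in $U$, so (3) and (4) furnish $E^u(x)$ and $E^s(x)$ of complementary dimensions $d$ and $\dim\Es$. They are $Df$-invariant since re-indexing the defining intersections gives $Df(E^u(x))=E^u(f(x))$, and likewise for $E^s$; they are transverse because $E^u\subof\{Q>0\}\cup\{0\}$ while $E^s\subof\{Q<0\}\cup\{0\}$, so $E^u(x)\cap E^s(x)=0$ and by dimension count $\TxM=E^u(x)\oplus E^s(x)$; and on $\Lam$ the uniqueness of the $d$-plane in $\bigcap C_n$ forces $E^u=\Eu$ and $E^s=\Es$, so the splitting genuinely extends the original one, giving (5). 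That the extension is still dominated follows because the one-step inequality $\|Df|_{E^s}\| < m(Df|_{E^u})$ is a closed condition inherited in the limit from $\Lam$. I expect the main obstacle to be exactly the quantitative cone contraction of the third paragraph: pinning down that the graph transform is a genuine fiber contraction \emph{off} the invariant set $\Lam$, where no invariant splitting is available, and that this forces the nested cones to collapse onto a subspace of dimension exactly $d$ with no stray vectors surviving. Everything else is continuity and index bookkeeping.
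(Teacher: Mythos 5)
The paper does not actually write out a proof of this lemma; it defers to the techniques of \cite[Section 2]{cropot2015lecture} and leaves the details to the reader. Your proposal is precisely that standard argument (adapted metric making the domination a one-step condition, a quadratic-form cone field extended off $\Lam$, strict invariance by compactness, collapse of nested cones via graph-transform contraction), and your treatment of items (1)--(4) is correct, including the formal deduction of dual-cone invariance under $Df\inv$ from item (2).

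There is, however, a genuine flaw in the last step of (5). You justify domination of the extended splitting by saying the one-step inequality $\|Df|_{E^s}\| < m(Df|_{E^u})$ is ``a closed condition inherited in the limit from $\Lam$.'' This fails for two reasons. First, points of the maximal invariant set $\bigcap_{n \in \bbZ} f^n(U)$ need not be approximated by points of $\Lam$ at all: $\Lam$ is closed, so limits of points of $\Lam$ lie in $\Lam$, while the maximal invariant set in $U$ can contain entire orbits (e.g.\ a nearby invariant Cantor set) disjoint from and not accumulating on $\Lam$. Second, even where a limiting argument applies, strict inequalities pass to the limit only as non-strict ones, and a non-strict one-step inequality does not give a dominated splitting. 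The correct justification uses the contraction you already set up for (3): for $x$ in the maximal invariant set and unit vectors $v^u \in E^u(x)$, $v^s \in E^s(x)$, the uniform gap in the strict invariance puts $E^u(x)$ uniformly inside $\Cone(x)$ and $E^s(x)$ uniformly inside $\Cone^*(x)$, so $w = v^u + \ep v^s$ lies in $\Cone(x)$ for a uniform $\ep > 0$. Then $w$ lies in some $d$-plane $P \subof \Cone(x)$, and the graph-transform contraction (applied to $P$ and to $E^u(x)$, whose forward images are $E^u(f^n(x))$ by invariance) shows $Df^n(P)$ is the graph of a map $E^u(f^n(x)) \to E^s(f^n(x))$ of norm at most $2\kappa^n$. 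Reading off the components of $Df^n w = Df^n v^u + \ep \, Df^n v^s$ in the splitting at $f^n(x)$ yields $\ep \|Df^n v^s\| \le 2 \kappa^n \|Df^n v^u\|$, which is exactly the domination estimate, with uniform constants. So the gap is repairable entirely with tools already present in your proof, but the final step as you wrote it does not stand.
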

The proof of \cref{lemma:coneprops}
uses the same techniques as in
\cite[Section 2]{cropot2015lecture}
and is left to the reader.

\begin{lemma} \label{lemma:boneincone}
    In the setting of \cref{prop:attractorsplit},
    if there are cone fields
    $\Bone$ defined on $\Lam \sans f(U)$
    and
    $\Cone$ defined on $\overline{U}$
    such that
    $d = \dim \Bone = \dim \Cone$
    and
    \begin{align*}
        Df(\Bone(x)) &\subof \iBone(f(x))
        &\text{ if } x  \in  M \sans U,\\
        Df(\Cone(x)) &\subof \iCone(f(x))
        &\text{ if } x  \in  \overline{f(U)},\\
        \Bone(x) &\subof \Cone(x)
        &\text{ if } x  \in  \overline{U} \sans f(U),
    \end{align*}
    then there is a dominated splitting of dimension $d$
    defined on all of $\Lam$.
\end{lemma}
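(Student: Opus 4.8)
The goal is to manufacture, from the two cone fields $\Bone$ and $\Cone$, the single cone field hypothesis needed to invoke \cref{lemma:coneprops} and \cref{prop:attractorsplit}. The plan is to define a cone field $\cW$ on the \emph{closure of the trapping region} $\Lam$ by gluing $\Bone$ and $\Cone$ together, using $\Cone$ on $\overline{U}$ and $\Bone$ on $\Lam \sans f(U)$. These two pieces overlap precisely on the fundamental domain $\overline{U}\sans f(U)$, where the hypothesis $\Bone(x)\subof\Cone(x)$ tells us the $\Bone$-cone sits inside the $\Cone$-cone. So the natural choice is $\cW(x)=\Cone(x)$ on $\overline U$ and $\cW(x)=\Bone(x)$ on $\Lam\sans U$; continuity across the seam $\del U$ will need a small argument, but the containment relation is exactly what makes the forward image behave well there.

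The key verification is that $\cW$ is forward-invariant under $Df$, i.e.\ that $Df(\cW(x))\subof\interior\cW(f(x))$ for every $x\in\Lam$ with $f(x)\in\Lam$. I would split into cases according to where $x$ and $f(x)$ land. If $x\in M\sans U$ then $\cW(x)=\Bone(x)$, and since $f(U)\Subof U$ forces $f(x)\in M\sans f(U)$ (so that $\cW(f(x))$ is at least as large as $\Bone(f(x))$), the first displayed hypothesis $Df(\Bone(x))\subof\iBone(f(x))$ gives the required containment. If $x\in\overline{f(U)}$ then $\cW(x)=\Cone(x)$, and the second hypothesis $Df(\Cone(x))\subof\iCone(f(x))$ closes the case, again because $f(x)\in U$ when $x\in f(U)$. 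The remaining, and most delicate, region is the fundamental domain $x\in\overline U\sans f(U)$: here $\cW(x)=\Cone(x)$ by definition, while $f(x)\in f(U)\subof U$ so $\cW(f(x))=\Cone(f(x))$, and the second hypothesis again applies. The crossing $\Bone(x)\subof\Cone(x)$ is what guarantees the glued field is genuinely a single continuous cone field rather than two incompatible ones on the overlap.

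Once $\cW$ is known to be a continuous, $Df$-invariant cone field of dimension $d$ on $\Lam$, the dominated splitting follows from the cone-field criterion: the bundle $\Eu(x)$ is recovered as the nested intersection $\bigcap_{n}Df^n(\cW(f^{-n}(x)))$ along backward orbits that remain in the invariant region, exactly as in \cref{lemma:coneprops}(3), and the complementary $\Es$ from the dual cones as in part (4). The two hypotheses of \cref{prop:attractorsplit} concerning the maximal forward-invariant set $\bigcap_{n>0}f^n(U)$ and the set $\bigcap_{n>0}\Lam\sans f^{-n}(U)$ are subsumed: on those sets the cones $\Cone$ and $\Bone$ were built to be invariant, so the intersections define the splitting there, and the condition on $\UfU$ matching $V_x$ is encoded in the forward-invariance of $\cW$ on the fundamental domain.

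The main obstacle I expect is the gluing across the boundary $\del U$. The two cone fields need not agree there, only satisfy $\Bone\subof\Cone$ on $\overline U\sans f(U)$, so writing down a single quadratic form $Q_x$ depending \emph{continuously} on $x$ through the seam requires care: one typically interpolates the defining quadratic forms, or instead works directly with the set-valued cone field and checks that the nested-intersection construction is insensitive to the precise choice on a compact fundamental domain. I would handle this by noting that forward invariance is an open condition and the containment $\Bone\subof\Cone$ gives slack, so a partition-of-unity interpolation of the forms $Q^{\Bone}_x$ and $Q^{\Cone}_x$ over a collar of $\del U$ preserves the strict inclusions $Df(\cW(x))\subof\interior\cW(f(x))$ established in each case above.
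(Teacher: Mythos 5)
Your proposal is essentially the paper's own proof: the paper glues $\Bone$ and $\Cone$ into a single strictly $Df$-invariant cone field by taking the convex combination $(1-\al(x))P_x + \al(x)Q_x$ of their defining quadratic forms, where $\al:\Lam\to[0,1]$ is continuous, equal to $0$ on $M\sans U$ and to $1$ on $f(U)$ --- exactly the partition-of-unity interpolation you fall back on to repair the seam --- and then concludes from the strict invariance $Df(\Aone(x))\subof\iAone(f(x))$ via the standard cone-field criterion. One shared subtlety: your case 3 invokes the second hypothesis at points of $\overline{U}\sans f(U)$, where the lemma as written grants it only on $\overline{f(U)}$; the paper's (unwritten) verification on the fundamental domain needs the very same extension, so both arguments are in effect reading that hypothesis as holding wherever $\Cone(x)$ and $\Cone(f(x))$ are both defined, which is how it is verified in the application to \cref{prop:attractorsplit}.
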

\begin{proof}
    Let $\al:\Lam \to [0,1]$ be a continuous function such that
    $\al(M \sans U) = \{0\}$ and $\al(f(U)) = \{1\}$.
    If $P_x$ is the continuous family of quadratic forms defining $\Bone$
    and $Q_x$ is the family defining $\Cone$,
    then
    \[
        (1 - \al(x)) P_x + \al(x) Q_x
    \]
    defines a cone field $\Aone$ on $\Lam$
    such that $Df(\Aone(x)) \subof \iAone(f(x))$
    for all $x  \in  \Lam$.
    This inclusion implies the existence of a dominated splitting.
\end{proof}
\begin{proof}
    [Proof of \cref{prop:attractorsplit}]
    Let $\Lam_C$ and $\Lam_B$ denote the two intersections respectively
    in item (2) of the proposition.
    By \cref{lemma:coneprops},
    there is a cone field $\Cone_0$
    defined on a neighborhood $U_C$ of $\Lam_C$.
    For $n  \in  \bbZ$,
    define a cone field
    $\Cone_n$
    on $f^{n}(U_C)$
    by
    \begin{math}
        \Cone_n(x) = Df^n(\Cone_0(f^{-n}(x))).
    \end{math}
    Similarly,
    define a cone field $\Bone_0$ on a neighborhood $U_B$ of $\Lam_B$
    and for each $n  \in  \bbZ$
    define the cone field
    \begin{math}
        \Bone_n(x) = Df^n(\Bone_0(f^{-n}(x))).
    \end{math}
    \smallskip{}

    We claim here that
    \begin{math}
        \bigcap_m \Bone_m(x) = V_x  \end{math}
    for all $x  \in  \UfU$
    where the intersection is over all $m  \in  \bbZ$
    for which $\Bone_m(x)$ is defined
    and $V_x$ is the subspace given in the statement of the proposition.
    Indeed, if $v  \in  V_x$ is non-zero, then there is a sequence
    $n_j \to \infty$
    such that
    $v^{-n_j}$ converges to a vector in $\Eu$.
    Hence,
    $v^{-n_j}  \in  \Bone_0$ for all large $j$.
    Equivalently,
    $v  \in  \Bone_{n_j}$ for all large $j$.
    Since the sequence $\Bone_n$ is nested,
    \[    
        \bigcap_{j} \Bone_{n_j}(x) = \bigcap_n \Bone_n(x).
    \]
    This shows that $V_x \subof \bigcap_n \Bone_n(x)$.
    Since both sets are $d$-dimensional subspaces
    of $T_x M$, they must be equal.
    This proves the claim

    \smallskip{}

    If, for some $m,n  \in  \bbZ$, the cone fields
    $\Bone_m$ and $\Cone_n$
    satisfied the conditions of \cref{lemma:boneincone},
    then the desired dominated splitting would exist.
    Hence,
    we may assume that for every $m$, $n  \in  \bbZ$,
    the open set
    \[
        \{x : \Bone_m(x) \subof \interior \Cone_n(x) \}
    \]
    does not cover all of $\UfU$.
    By compactness,
    there is $y  \in  \UfU$
    such that
    \[
        \Bone_m(y) \sans \interior \Cone_n(y)
    \]
    is non-empty for all $m$, $n  \in  \bbZ$.
    By compactness of the unit sphere in $T_y M$,
    the intersection
    \[
        \bigcap_{m,n} \Bone_m(y) \sans \interior \Cone_n(y)
    \]
    is non-empty.
    Let $u$ be a unit vector in this intersection.
    Since $u  \in  \bigcap_m \Bone_m(y)$,
    the above claim shows that
    $u  \in  V_y$.
    Therefore,
    there is $n_j \to \infty$ such that $u^{n_j}$
    converges to a vector in $\Eu$.
    Then $u^{n_j}  \in  \Cone_0$ for all large $j$,
    and therefore
    \begin{math}
        u  \in  \Cone_{-n_j} \subof \interior \Cone_{-n_j-1}  \end{math}
    for all large $j$ as well.
    This gives a contradiction.
\end{proof}
\medskip{}


\begin{proof}
    [Proof of \cref{thm:chaindom}]
    By the so-called ``Fundamental Theorem of Dynamical Systems''
    due to Conley
    \cite{nor1995fundamental},
    there is a continuous function $\ell:M \to \bbR$
    such that $\ell(f(x))  \le  \ell(x)$
    with equality if and only if $x$ is in the set $R(f)$
    of chain-recurrent points.
    Further,
    $\ell(R(f)$) is a compact, nowhere dense subset of $\bbR$.

    Let $\Cone$ be a cone field defined on a 
    neighborhood $U$ of $R(f)$ as in
    \cref{lemma:coneprops}.
    Then, there is $\delta > 0$ such that
    \begin{math}
        \ell(x) - \ell(f(x)) > \delta
    \end{math}
    for all $x \notin U$.
    Define
    \[
        a_1 < b_1 < a_2 < b_2 < \cdots < a_q < b_q
    \]
    such that $b_i - a_i < \delta$ for all $i$
    and the union of closed intervals $[a_i,b_i]$
    covers $\ell(R(f)$).
    For $a$$,b  \in  \bbR$ define
    \[
        \Lam[a,b] :=
            \begin{bigset}
            x  \in  M : \ell(f^n(x))  \in  [a,b]
            \text{ for all }
            n  \in  \bbZ  \end{bigset}
    \]
    If $x  \in  \Lam[a_i,b_i]$,
    then $b_i - a_i < \delta$
    implies that $f^n(x)  \in  U$ for all $n$.
    Therefore, the dominated splitting may be extended to each $\Lam[a_i,b_i]$.
    By the inductive hypothesis,
    assume the dominated splitting
    has been extended to all of $\Lam[a_1,b_k]$.
    Choose $t_k  \in  (b_k,a_{k+1})$ and
    use
    \[    
        \Lam = \Lam[a_i,b_{k+1}]
        \qandq
        U = \{ x  \in  \Lam : \ell(x) < t_k \}
    \]
    in \cref{prop:attractorsplit}
    to extend the dominated splitting
    to all of $\Lam$.
    By induction,
    the dominated splitting extends to all of $\Lam[a_1,b_q] = M$.
\end{proof}
When applying \cref{thm:chaindom},
it may be a hassle
to show directly that
$v^n$ accumulates on $\Eu$.
Suppose instead we know that there is a sequence
$\{n_j\}$
with $\lim_j n_j = +\infty$ such that
$v^{n_j}$ converges to a unit vector
$w  \in  T_Z M$ which does not lie in $\Es$.
As with $v$, we use the notation
\[
    w^m = \normalize{Df^m(w)}.
\]
The properties of the dominated splitting on $Z$
imply that there is a sequence $\{m_j\}$
tending to $+\infty$
such that $\lim_j w^{m_j}$ exists and lies in $\Eu$.
By replacing $\{n_j\}$ with a further subsequence,
one may establish that $\lim_j v^{n_j + m_j} = \lim_j w^{m_j}$.
This reasoning shows that
if $v^{n}$ accumulates on a vector in $T_Z M \sans \Es$,
it also accumulates on a vector in $\Eu$.

Iterating in the opposite direction,
suppose there is a sequence $\{n_j\}$ tending to $+\infty$
such that $\{v^{-n_j}\}$ converges to $w  \in  T_Z M \sans \Es$.
Then there is a sequence $\{m_j\}$ tending to $+\infty$
such that $\lim_j w^{m_j}$ exists and lies in $\Eu$.
By replacing $\{n_j\}$ with a subsequence,
one may establish both that $\lim_j (-n_j + m_j) = -\infty$
and $\lim_j v^{-n_j + m_j} = \lim_j w^{m_j}$.
Hence,
if $v^{-n}$ accumulates on a vector in $T_Z M \sans \Es$,
it also accumulates on a vector in $\Eu$.

With these observations in mind,
we now state a slightly generalized version of \cref{thm:chaindom}.
The proof is highly similar
and is left to the reader.

\begin{thm} \label{thm:genchaindom}
    Suppose $f$ is a diffeomorphism of a manifold $M$,
    and $Y$ and $Z$ are compact invariant subsets such that
    \begin{enumerate}
        \item all chain recurrent points of $f|_Y$ lie in $Z$,

        \item $Z$ has a dominated splitting $T_Z M = \Eu \oplus \Es$
        with $d = \dim \Eu$,
        and

        \item for every $x  \in  Y \sans Z$,
        there is a point $y$ in the orbit of $x$
        and a subspace $V_y$ of dimension $d$ such that
        for any non-zero $v  \in  V_y$, each of the sequences
        $v^{n}$ and $v^{-n}$ accumulates on a vector
        in $T_Z M \sans \Es$ as $n \to +\infty$.
    \end{enumerate}
    Then the dominated splitting on $Z$ extends to a dominated splitting
    on $Y \cup Z$.
\end{thm}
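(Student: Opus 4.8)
The plan is to follow the proof of \cref{thm:chaindom} almost verbatim, after two preliminary adjustments: strengthening condition (3) and running Conley's argument for $f|_Y$ rather than for the ambient map. For the first adjustment, recall the two observations recorded immediately before the statement: using only the domination on $Z$, they show that if $v^{n}$ (respectively $v^{-n}$) accumulates on a vector of $T_Z M \sans \Es$, then it accumulates on a vector of $\Eu$. Thus condition (3) may be replaced by the formally stronger requirement that $v^{n}$ and $v^{-n}$ accumulate on $\Eu$. I would then transport this data along orbits: if $x = f^k(y)$ and $w = Df^k v$ with $v \in V_y$, then $w^{n} = v^{n+k}$, so setting $V_x := Df^k(V_y)$ preserves the accumulation property. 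Consequently $V_x$ becomes available at every $x \in Y \sans Z$, while for $x \in Z$ one takes $V_x := \Eu(x)$, whose forward and backward iterates remain in $\Eu$ by invariance.

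Next I would apply Conley's theorem \cite{nor1995fundamental} to the homeomorphism $f|_Y$ of the compact set $Y$, obtaining a continuous $\ell : Y \to \bbR$ with $\ell \circ f \le \ell$, equality precisely on $R(f|_Y)$, and $\ell(R(f|_Y))$ compact and nowhere dense; by hypothesis (1), $R(f|_Y) \subof Z$. Applying \cref{lemma:coneprops} to the splitting on $Z$ gives a cone field $\Cone$ on a neighborhood $U$ of $Z$, and since $R(f|_Y) \subof Z \subof U$ with $\ell$ strictly decreasing off $R(f|_Y)$, compactness of $Y \sans U$ yields $\delta > 0$ with $\ell(x) - \ell(f(x)) > \delta$ there. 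I would cover $\ell(R(f|_Y))$ by intervals $[a_i,b_i]$, $1 \le i \le q$, of length below $\delta$, set $\Lam[a,b] = \{ x \in Y : \ell(f^n(x)) \in [a,b] \text{ for all } n \in \bbZ \}$, and observe that orbits inside a single $\Lam[a_i,b_i]$ never leave $U$, so \cref{lemma:coneprops} extends the splitting to each $\Lam[a_i,b_i]$ in a way that restricts to the given splitting on $Z$.

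The induction is then identical to that of \cref{thm:chaindom}. Assuming the splitting has been extended to $\Lam[a_1,b_k]$, I pick $t_k \in (b_k, a_{k+1})$ and apply \cref{prop:attractorsplit} with $\Lam = \Lam[a_1, b_{k+1}]$ and $U = \{ x \in \Lam : \ell(x) < t_k \}$; here $f(U) \Subof U$ because $t_k \notin \ell(R(f|_Y))$, the two nested intersections are $\Lam[a_1,b_k]$ (known by induction) and $\Lam[a_{k+1},b_{k+1}]$ (known from the cone field), and condition (3) of the proposition is supplied by the $V_x$ prepared above. Because every $f|_Y$-orbit has its $\alpha$- and $\omega$-limit sets inside $R(f|_Y) \subof Z$ and $\ell$ is monotone along orbits, $\Lam[a_1,b_q] = Y$, so the splitting extends to all of $Y$. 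This extension agrees with the given splitting on the overlap $Y \cap Z$ (there the subspace produced is $V_x = \Eu(x)$), so the two glue to a single continuous invariant splitting on $Y \cup Z$; as domination is the eventual positivity of the superadditive cochain $\log m(Df^n|_{\Eu(x)}) - \log \|Df^n|_{\Es(x)}\|$ and this holds on each of the compact sets $Y$ and $Z$, it holds on $Y \cup Z$ as well.

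The step I expect to be the main obstacle---indeed the only genuine deviation from \cref{thm:chaindom}---is the decision to run Conley's theorem on $f|_Y$ rather than on $f|_{Y \cup Z}$. This is forced: once the orbits of $Z$ are available to close up $\ep$-chains, chain recurrence can appear in $Y \sans Z$ (for example if $Z$ contributes a heteroclinic orbit returning to where an orbit of $Y$ originates), so $R(f|_{Y \cup Z})$ may meet $Y \sans Z$, where no splitting is given and the gradient-like crossing argument would stall. Hypothesis (1) is stated for $f|_Y$ precisely to sidestep this, and the compatibility of the resulting extension on $Y$ with the given splitting on $Z$ is what legitimizes the final gluing.
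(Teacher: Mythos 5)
Your proposal is correct and is essentially the proof the paper intends: the paper explicitly leaves \cref{thm:genchaindom} to the reader as ``highly similar'' to \cref{thm:chaindom}, and you carry out exactly that adaptation---using the two observations preceding the statement to upgrade condition (3) to accumulation on $\Eu$, running Conley's theorem on $f|_Y$ so that hypothesis (1) places $R(f|_Y)$ inside $Z$, and repeating the interval induction via \cref{prop:attractorsplit} before gluing to the given splitting on $Z$ (the agreement on $Y \cap Z$ being guaranteed by uniqueness of dominated splittings of a fixed index on an invariant set). Your closing remark about why Conley must be applied to $f|_Y$ rather than $f|_{Y \cup Z}$ correctly identifies the one place where the generalization requires care.
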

\section{Splittings on the 2-torus} \label{sec:useful} 

This section introduces a number of properties of dominated splittings
in dimension 2
that will be used in the next section to prove \cref{thm:dacs}.
First, we state the announced result of
Gourmelon and Potrie mentioned in the introduction.

\begin{prop} \label{prop:isodom}
    If $g_0:\bbT^2 \to \bbT^2$ has a global dominated splitting
    and
    $g_0$ is isotopic to a linear Anosov diffeomorphism
    $A:\bbT^2 \to \bbT^2$,
    then there is a continuous parameterized family of diffeomorphisms
    $g: \bbT^2 \times [0,1] \to \bbT^2$
    such that
    $g(\cdot,0) = g_0$,
    $g(\cdot,1) = A$,
    and each $g(\cdot,t)$ for $t \in [0,1]$
    has a dominated splitting.
\end{prop}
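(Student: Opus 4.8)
The plan is to convert the statement into a connectedness problem and then to build the path explicitly through the cone fields that certify domination. First I would record the two structural facts that make the torus case tractable. Domination is a $C^1$-open condition, and the linear map $A$, being Anosov, is structurally stable, so some $C^1$-neighborhood $\mathcal{U}$ of $A$ consists entirely of diffeomorphisms that are Anosov (hence have a global dominated splitting) and are isotopic to $A$. Since $\mathcal{U}$ may be taken to be $C^1$-path-connected, it therefore suffices to join the given $g_0$ to a single element of $\mathcal{U}$ by a path lying in the set $D$ of diffeomorphisms of $\bbT^2$ with a global dominated splitting; the final segment inside $\mathcal{U}$ is then automatic, and by rescaling the parameter we arrive exactly at $A$.

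Next I would exploit the two-dimensional structure. A global dominated splitting on $\bbT^2$ is a pair of transverse continuous line fields $\Eu$ and $\Es$, equivalently a continuous, eventually strictly invariant $1$-dimensional cone field $\Cone$; recall that the strict inclusion $Dg(\Cone(x)) \subset \iCone(g(x))$ is exactly what certifies domination (compare \cref{lemma:boneincone}). Viewing $\Eu$ as a section of the projectivized tangent bundle, i.e.\ a map $\bbT^2 \to \mathbb{RP}^1$, the hypothesis that $g_0$ is isotopic to the hyperbolic automorphism $A$ constrains the homotopy type of this line field: it must be compatible with that of the constant eigendirection field of $A$, which is what will permit the two cone fields to be deformed into one another. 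The core of the argument is then to deform the pair $(g_0,\Cone_0)$ simultaneously to $(A,\Cone_A)$, where $\Cone_A$ is a standard cone about the unstable eigendirection of $A$, keeping strict cone invariance at every time. Because domination is certified pointwise by the cone inclusion, any such family lands in $D$, and the cone criterion yields a dominated splitting for every $g(\cdot,t)$.

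To build the deformation I would write $g_0 = A \circ \phi$ with $\phi := A^{-1}\circ g_0$ isotopic to the identity, fix an isotopy $\phi_t$ from $\phi$ to $\id$, set $g_t = A \circ \phi_t$, and attempt to transport the cone field along by pushing a reference cone forward under the iterated derivative cocycle of $g_t$. The delicate point, and the step I expect to be the main obstacle, is that a naive isotopy will generically violate strict cone invariance at intermediate times, since $D$ is open but neither convex nor closed. Overcoming this requires using the homotopy-class information above to select the isotopy so that the invariant line fields rotate monotonically and never become tangent, together with a reparametrization that accelerates the expansion and contraction rates enough for the cone inclusion to persist throughout; this is precisely the technical content of the announced theorem of \textbf{Gourmelon and Potrie}, namely that the intersection of $D$ with the isotopy class of $A$ is connected. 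Granting that connectedness, the family $g(\cdot,t)$ with the stated endpoints exists and each member has a dominated splitting, which is the assertion of the proposition.
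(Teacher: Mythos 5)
The paper does not actually prove this proposition: it is stated verbatim as the announced (and unpublished) result of Gourmelon and Potrie, and the paper's only ``proof'' is that citation, together with the caveat that readers wishing to avoid the announced result must add a connected-component hypothesis to Theorems \ref{thm:everycs} and \ref{thm:dacs}. Your proposal, after its preliminary reductions (openness of domination, structural stability near $A$, cone-field bookkeeping), defers the essential step --- connectedness of the isotopy class of $A$ inside the set of diffeomorphisms with dominated splittings --- to exactly that same announced theorem, so it is in substance the same treatment as the paper's.
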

If, out of caution, one wants to avoid using this announced but not yet
published result, then a condition must be added to the $g_0$ and $g_i$
in theorems \ref{thm:everycs} and \ref{thm:dacs}
that they lie in the connected component of $A$.
It is already well established that this connected component contains
weakly partially hyperbolic diffeomorphisms which are not Anosov.
For completeness, \cref{prop:bifurcate} below gives a specific example of a weakly
partially hyperbolic system which can be reached from a linear Anosov system
by a path in the space of systems with dominated splittings.

\begin{addendum}
    We may assume $g$ in \cref{prop:isodom} is a $C^1$ function both in $\bbT^2$
    and $[0,1]$.
\end{addendum}
\begin{proof}
    [Proof of addendum]
    Suppose that $g$ is $C^0$ in the parameter $[0,1]$.
    As diffeomorphisms with a dominated splitting comprise an open subset
    of all $C^1$ diffeomorphisms,
    there is a partition
    $0 = t_0 < t_1 < \cdots < t_m = 1$
    such that if $t \in [t_i, t_{i+1}]$
    then the linear interpolation defined by
    \[
        x \mapsto g(x, t_i) +
        \frac
            {t - t_i}
            {t_{i+1} - t_i}
        [g(x,t_{i+1} - g(x,t_i)]
    \]
    has a dominated splitting.
    Hence, we may assume without loss of generality
    that $g$ is piecewise linear.
    Define a smooth monotonic function
    $\al:[0,1] \to [0,1]$ such that
    $\al(t_i) = t_i$ and
    \begin{math}
        \frac{d \al}{d t}|_{t=t_i} = 0  \end{math}
    for all $i$.
    Then
    \begin{math}
        (x,t) \mapsto g(x, \al(t))  \end{math}
    is a $C^1$ function.
\end{proof}
To keep consistent notation with \cref{sec:proofdacs},
the next proposition uses
$\Ec$ and $\Eu$ to denote the bundles of the dominated splitting.
In this context,
the $\Eu$ bundle may not necessarily be uniformly expanding
for all $t  \in  [0,1]$.

\begin{prop} \label{prop:slowcone}
    If $g:\bbT^2 \ti [0,1] \to \bbT^2$ is a $C^1$
    function such that each $g(\cdot, t)$
    has a dominated splitting,
    then there are cone fields for each $g(\cdot, t)$
    which vary continuously in $t$.

    In particular, there is $\eta > 0$ such that
    if, at a point $(x,t)  \in  \bbT^2 \ti [0,1]$,
    the dominated splitting is given by
    \[
        T_x \bbT^2 = \Ec(x,t) \oplus \Eu(x,t),
    \]
    then the cone $\Cone(x,t) \subof T_x \bbT^2$
    satisfies the properties
    \begin{enumerate}
        \item $\Eu(x,t) \subof \Cone(x,t)$,

        \item $\Ec(x,t) \subof \Cone^*(x,t)$, and

        \item $Dg(\Cone(x,s))$
        lies in the interior of
        $\Cone(g(x,t), t)$\\
        for all $s  \in  [0,1]$ with $|s - t| < \eta$.
    \end{enumerate}  \end{prop}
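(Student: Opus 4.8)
The plan is to view the entire family as a single object and then apply the cone-field construction of \cref{lemma:coneprops}. Write $g_t := g(\cdot,t)$ and let $G\colon \bbT^2 \ti [0,1] \to \bbT^2 \ti [0,1]$ be the homeomorphism $G(x,t) = (g_t(x),t)$; its fiberwise derivatives $Dg_t(x)\colon T_x\bbT^2 \to T_{g_t(x)}\bbT^2$ form a continuous linear cocycle over $G$. For each fixed $t$ the dominated splitting of $g_t$ is unique of its dimension, and it varies continuously both with the base point and, in the $C^1$ topology, with the diffeomorphism; since $t \mapsto g_t$ is continuous into $\Diff^1(\bbT^2)$, the line bundles $\Eu(x,t)$ and $\Ec(x,t)$ are jointly continuous in $(x,t)$ and give a $G$-invariant splitting over the compact set $\bbT^2 \ti [0,1]$. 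If I can show this splitting is \emph{uniformly} dominated, then \cref{lemma:coneprops}, whose construction is fiberwise and applies verbatim to a linear cocycle over a compact base as in \cite[Section 2]{cropot2015lecture}, produces a cone field $\Cone(x,t) \subof T_x\bbT^2$, continuous in $(x,t)$ and built around $\Eu$ from the splitting, so that $\Eu(x,t) \subof \Cone(x,t)$, $\Ec(x,t) \subof \Cone^*(x,t)$, and $Dg_t(\Cone(x,t)) \subof \iCone(g_t(x),t)$. Continuity in $(x,t)$ yields the asserted continuous dependence on $t$ together with properties (1) and (2), and the last inclusion is property (3) in the matched case $s = t$.

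The main obstacle is promoting the hypothesis ``each $g_t$ has a dominated splitting'' to a single uniform domination across the whole family. Set
\[
    \phi_n(x,t) = \log m\big(Dg_t^n|_{\Eu(x,t)}\big) - \log \big\|Dg_t^n|_{\Ec(x,t)}\big\|,
\]
which is continuous in $(x,t)$ and, for each fixed $(x,t)$, superadditive in $n$, being the sum of the two superadditive cochains of \cref{sec:ineq}. Fix $t_0$. Domination of $g_{t_0}$ and compactness of $\bbT^2$ give $N_{t_0} \ge 1$ and $c_{t_0} > 0$ with $\phi_{N_{t_0}}(\cdot, t_0) > c_{t_0}$ everywhere, and by continuity $\phi_{N_{t_0}}(\cdot, t) > \tfrac{1}{2}c_{t_0}$ for all $t$ in a neighborhood $J_{t_0}$ of $t_0$. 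Cover $[0,1]$ by finitely many such $J_{t_1}, \dots, J_{t_k}$ and let $N^\ast$ be a common multiple of $N_{t_1}, \dots, N_{t_k}$. Superadditivity upgrades a uniform lower bound at exponent $N_{t_j}$ to one at every multiple: iterating $\phi_{n+m}(x) \ge \phi_n(g_t^m x) + \phi_m(x)$ gives $\phi_{rN_{t_j}}(\cdot, t) > r\cdot\tfrac{1}{2}c_{t_j}$ on $J_{t_j}$. Writing $N^\ast = r_j N_{t_j}$ on $J_{t_j}$, we conclude $\phi_{N^\ast} > 0$ on all of $\bbT^2 \ti [0,1]$, whence by compactness $\phi_{N^\ast} \ge \log\lambda$ for some $\lambda > 1$. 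This is exactly uniform $N^\ast$-step domination of the cocycle, so \cref{lemma:coneprops} applies as above.

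It remains to find $\eta$ and verify (3) for $|s-t| < \eta$. Let $Q_{(y,t)}$ denote the quadratic form defining $\Cone(\cdot,t)$. The matched-parameter inclusion proved above says that every unit vector $w \in Dg_t(\Cone(x,t))$ satisfies $Q_{(g_t(x),t)}(w) > 0$, and by compactness of $\bbT^2 \ti [0,1]$ together with compactness of the unit vectors of the source cones this holds with a uniform positive lower bound. Since $Q$, the cocycle $Dg_t$, the basepoint $g_t(x)$, and the source cones $\Cone(x,s)$ all depend continuously on their arguments, replacing $\Cone(x,t)$ by $\Cone(x,s)$ with $s$ close to $t$ perturbs $Q_{(g_t(x),t)}(w)$ by an arbitrarily small amount, uniformly in $(x,t)$. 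Hence there is a single $\eta > 0$ such that $Q_{(g_t(x),t)}(w) > 0$ for every nonzero $w \in Dg_t(\Cone(x,s))$ whenever $|s - t| < \eta$; that is, $Dg_t(\Cone(x,s)) \subof \iCone(g_t(x),t)$, which is precisely property (3).
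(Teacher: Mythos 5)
Your proof is correct, and its overall skeleton matches the paper's: treat the family as a single object over the compact base $\bbT^2 \ti [0,1]$, produce a cone field continuous in $(x,t)$ that is strictly contracted in the matched case $s=t$, and then obtain the uniform $\eta$ in property (3) by a compactness argument — the paper's last step is word-for-word the same as yours. Where you genuinely differ is in how uniformity across the family is achieved. The paper constructs a single smooth adapted metric $\vertiii{\cdot}$ and $\lam<1$ valid for all $(x,t)$ at once, by adapting the argument of \cite[Section 2.4]{cropot2015lecture}, and then defines the cones explicitly as $\{v^c+v^u : \vertiii{v^c} \le \vertiii{v^u}\}$, so one-step contraction is immediate from the metric. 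You instead (i) invoke robustness and uniqueness of dominated splittings to get joint continuity of $(x,t)\mapsto (\Eu,\Ec)$, (ii) run a covering-plus-superadditivity argument to promote per-parameter domination to uniform $N^*$-step domination of the cocycle — in effect a parametrized version of \cref{prop:superadd} — and (iii) cite \cref{lemma:coneprops} to produce the cones. Your route makes explicit the uniformity that the paper compresses into the phrase ``by adapting the proof to our current setting,'' which is a gain in rigor. The one point worth flagging is (iii): \cref{lemma:coneprops} is stated for a diffeomorphism acting on the full tangent bundle, whereas you need it for the fiberwise cocycle $Dg_t$ over $G(x,t)=(g_t(x),t)$; the vertical bundle is $DG$-invariant but has no canonical $DG$-invariant complement, so the splitting you feed in is not a dominated splitting of $T(\bbT^2\ti[0,1])$. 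You acknowledge this by appealing to the cocycle version of the construction in \cite[Section 2]{cropot2015lecture}, which is legitimate — the cone construction is genuinely a cocycle-level fact — but strictly speaking you are using that, not the lemma as stated in the paper.
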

\begin{proof}
    If a diffeomorphism $f:M \to M$ has a dominated splitting
    of the form
    $TM = E_1 \oplus_< E_2$,
    then
    it is possible to define $\lam < 1$
    and a Riemannian metric $\vertiii{\cdot}$
    which depends smoothly on $x$
    and such that
    \[
        \frac{\vertiii{Df v_1}}{\vertiii{v_1}}
        < \lam
        \frac{\vertiii{Df v_2}}{\vertiii{v_2}}
    \]
    for all $x  \in  M$ and non-zero $v_1  \in  E_1(x)$ and $v_2  \in  E_2(x)$.
    With respect to this metric,
    the domination is seen under one 
    application of $f$ instead of an iterate $f^N$.
    See, for instance, \cite[Section 2.4]{cropot2015lecture}
    for a proof.
    By adapting the proof to our current setting,
    one can show that there is $\lam < 1$ and a
    smooth choice of metric $\vertiii{\cdot}$
    such that 
    \[
        \frac{\vertiii{Df v^c}}{\vertiii{v^c}}
        < \lam
        \frac{\vertiii{Df v^u}}{\vertiii{v^u}}
    \]
    for all $(x,t)  \in  \bbT^2 \ti [0,1]$
    and non-zero
    $v^c  \in  \Ec(x,t)$ and
    $v^u  \in  \Eu(x,t)$.
    Define
    \[
        \Cone(x,t) =
            \begin{bigset}
            v^c + v^u \: : \: v^c  \in  \Ec(x,t), \: 
                        v^u  \in  \Eu(x,t), \:
                        \vertiii{v^c}  \le  \vertiii{v^u}  \end{bigset}
    \]
    and note that
    $Dg(\Cone(x,t)$) is in the interior of
    $\Cone(g(x,t),t)$
    for all $x$ and $t$.
    As $\Ec$, $\Eu$, and $\vertiii{\cdot}$
    are all continuous,
    the function $(x,t) \mapsto \Cone(x,t)$
    is continuous.
    Therefore, at each $(x,t)$, there is $\eta > 0$ such that
    $Dg(\Cone(x,s)$) 
    lies in the interior of
    $\Cone(g(x,t), t)$
    for all $s  \in  [0,1]$ with $|s - t| < \eta$.
    Since the domain is compact,
    this $\eta$ may chosen uniformly.
\end{proof}
The next lemma is used to determine the effect of shearing
in the proof of \cref{thm:dacs}.

\begin{lemma} \label{lemma:zAz}
    Suppose $A:\bbT^2 \to \bbT^2$ is a hyperbolic toral automorphism
    which preserves the orientation of its stable bundle.
    Lift $A$ to a linear map on $\bbR^2$
    and let $\Eu_A(0)$ denote the lifted unstable manifold through the
    origin.
    For any $C > 1$, there is $z  \in  \bbZ^2$ such that
    \begin{math}
        \dist
        \bigl(
        \zeta \cdot A(z) + \xi \cdot z, \, \Eu_A(0)
        \bigr)
         \ge  C(\zeta+\xi)
    \end{math}
    for all $\zeta,\xi  \ge  0$.
\end{lemma}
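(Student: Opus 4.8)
The plan is to pass to the eigenbasis of the lifted linear map $A$ and reduce the distance in the statement to a single scalar. Since $A$ is hyperbolic, write its real eigenvalues as $\lambda_u$ and $\lambda_s$ with $|\lambda_u| > 1 > |\lambda_s|$, and let $v^u,v^s$ be corresponding unit eigenvectors, so that $\Eu_A(0) = \bbR v^u$. The assumption that $A$ preserves the orientation of its stable bundle is precisely the statement that $\lambda_s > 0$, and I expect this sign to be the crux of the argument.

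First I would record a distance formula. Writing an arbitrary $p \in \bbR^2$ as $p = a\,v^u + b\,v^s$, a short computation (projecting off the $v^u$ direction) gives
\[
    \dist(p, \Eu_A(0)) = c_0\,|b|,
\]
where $c_0 = \sin\phi > 0$ and $\phi \in (0,\pi)$ is the fixed, nonzero angle between $v^s$ and $v^u$. Thus only the stable coordinate of $p$ contributes to the distance.

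Next, decompose the integer vector as $z = \alpha\,v^u + \beta\,v^s$. Because $A v^s = \lambda_s v^s$, the stable coordinate of $\zeta A(z) + \xi z$ equals $\beta(\zeta\lambda_s + \xi)$, so the distance formula yields
\[
    \dist\bigl(\zeta A(z) + \xi z,\ \Eu_A(0)\bigr) = c_0\,|\beta|\,(\zeta\lambda_s + \xi).
\]
Here is where positivity of $\lambda_s$ is used: for $\zeta,\xi \ge 0$ one has $\zeta\lambda_s + \xi \ge \lambda_s(\zeta+\xi)$, since the difference is $\xi(1-\lambda_s)\ge 0$. This is a genuine lower bound only because $\lambda_s>0$; if $A$ reversed the stable orientation, then $\zeta\lambda_s + \xi$ would vanish for suitable $\zeta,\xi > 0$ and no such bound could hold. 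This is the one place the hypothesis is essential, and it is the main obstacle to watch for — everything else is bookkeeping in the eigenbasis.

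Finally I would choose $z$. Since $\Eu_A(0)$ has irrational slope (its direction is an eigenvector for the irrational eigenvalue $\lambda_u$), the line meets $\bbZ^2$ only at the origin, so $\beta \ne 0$ for every nonzero $z \in \bbZ^2$. Fixing one such $z_0$ and replacing it by $n z_0$ scales $\beta$ by $n$ while leaving the target $C(\zeta+\xi)$ unchanged; taking $n$ large enough that $c_0\,|n\beta|\,\lambda_s \ge C$ then gives
\[
    \dist\bigl(\zeta A(n z_0) + \xi\, n z_0,\ \Eu_A(0)\bigr) = c_0\,|n\beta|\,(\zeta\lambda_s+\xi) \ge c_0\,|n\beta|\,\lambda_s\,(\zeta+\xi) \ge C(\zeta+\xi)
\]
for all $\zeta,\xi \ge 0$, which is the claimed inequality.
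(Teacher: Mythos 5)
Your proof is correct. The paper itself leaves this lemma's proof to the reader (remarking only that the orientation condition is necessary), so there is no written argument to compare against; your eigenbasis decomposition, the observation that orientation preservation means $\lambda_s > 0$ so that $\zeta\lambda_s + \xi \ge \lambda_s(\zeta+\xi)$ for $\zeta,\xi \ge 0$, and the final scaling of an integer vector (using that the unstable line meets $\bbZ^2$ only at the origin) constitute exactly the intended argument, and your aside about why the bound fails when $\lambda_s < 0$ matches the paper's own remark on the necessity of the orientation hypothesis.
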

The proof is left to the reader.
Note how the condition on orientation is necessary.

\medskip{}

To conclude the section,
we give a simple, concrete example of how
a linear Anosov map on $\bbT^2$ may be
isotoped into a derived-from-Anosov system.
This example has the nice additional property that the cone field
is independent of both $x$ and $t$.
For this example, assume $\bbT^2 = \bbR^2 / 2 \pi \bbZ^2$.

\begin{prop} \label{prop:bifurcate}
    For $t  \in  [0,1]$, let $g_t:\bbT^2 \to \bbT^2$ be defined by
    \[
            \begin{pmatrix}
            x \\
            y  \end{pmatrix}
        \mapsto
            \begin{pmatrix}
            5 & 2 \\
            2 & 1  \end{pmatrix}
            \begin{pmatrix}
            x - \tfrac{9 (1-t)}{10} \sin(x) & 0 \\
            0 & y - \tfrac{1-t}{2} \sin(y)  \end{pmatrix}
        .
    \]
    Then $g_t$ is weakly partially hyperbolic with a splitting of the form
    $\Ec \oplus \Es$.
    Moreover, if $\Cone$ is the cone field defined in each
    $T_x \bbT^2 \cong \bbR^2$
    by
    \begin{math}
            \begin{bigset}
            (u,v)  \in  \bbR^2 : u v  \ge  0  \end{bigset}
        ,
    \end{math}
    then
    $Dg_t(\Cone)$ lies in the interior of $\Cone$.

    If $t = 1$, then $g_t = g_1$ is a hyperbolic toral automorphism.

    If $t = 0$, then $g_t = g_0$ has a sink at $(x,y) = (0,0)$
    and is not Anosov.
              \end{prop}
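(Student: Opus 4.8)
The plan is to treat $g_t$ as the composition $g_t = A \circ N_t$, where $A = \left(\begin{smallmatrix}5&2\\2&1\end{smallmatrix}\right)$ and $N_t(x,y) = (x - c_1 \sin x,\ y - c_2 \sin y)$ with $c_1 = \tfrac{9(1-t)}{10}$ and $c_2 = \tfrac{1-t}{2}$. First I would record that $N_t$ descends to $\bbT^2 = \bbR^2/2\pi\bbZ^2$ (each factor commutes with translation by $2\pi$, and $A$ preserves the lattice), and that its derivative is the diagonal matrix with entries $a := 1 - c_1 \cos x$ and $b := 1 - c_2 \cos y$. Since $t \in [0,1]$ forces $c_1 \in [0,\tfrac{9}{10}]$ and $c_2 \in [0,\tfrac12]$, one gets $a \in [\tfrac1{10}, \tfrac{19}{10}]$ and $b \in [\tfrac12, \tfrac32]$; in particular $a,b$ are strictly positive (so $N_t$, and hence $g_t$, is a diffeomorphism) and, crucially, strictly less than $2$. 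This gives
\[
    Dg_t(x,y) = A \begin{pmatrix} a & 0 \\ 0 & b \end{pmatrix} = \begin{pmatrix} 5a & 2b \\ 2a & b \end{pmatrix}.
\]

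Next I would verify the cone condition directly. For $(u,w) \in \Cone$, that is $uw \ge 0$, the image $Dg_t(u,w) = (5au+2bw,\ 2au+bw)$ has coordinate product
\[
    (5au+2bw)(2au+bw) = 10a^2 u^2 + 9ab\, uw + 2b^2 w^2,
\]
a sum of nonnegative terms which vanishes only when $(u,w) = 0$. Hence $Dg_t(\Cone) \subof \iCone$, proving the stated cone inclusion. A strictly invariant cone field on the compact manifold $\bbT^2$ yields a dominated splitting $\Ec \oplus \Es$ whose dominant bundle $\Ec$ lies in $\Cone$.

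It remains to check that $\Es$ is uniformly contracting, which I expect to be the only delicate point: cone invariance alone gives domination but not contraction of the weaker bundle. My plan is to pass to the dual cone $\Cone^* = \{uw \le 0\}$ and use $A\inv = \left(\begin{smallmatrix}1&-2\\-2&5\end{smallmatrix}\right)$. A quadratic-form computation analogous to the one above shows $Dg_t\inv(\Cone^*) \subof \interior \Cone^*$, so that $\Es \subof \Cone^*$. Then, using $1/a^2 \ge (\tfrac{10}{19})^2$ and $1/b^2 \ge (\tfrac23)^2$ together with $a,b < 2$, I would estimate, for every $0 \ne v \in \Cone^*$,
\[
    \| Dg_t\inv v \|^2 \ge 2 \| v \|^2 .
\]
Since $\Es$ is invariant and contained in $\Cone^*$, this backward expansion forces $\|Dg_t^n|_{\Es}\| \to 0$ exponentially, so $\Es$ is contracting and the splitting is weakly partially hyperbolic of the form $\Ec \oplus \Es$. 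The point to be careful about is that $Dg_t\inv$ genuinely does contract in some directions, namely along $\Ec$, so the estimate must exploit that we have restricted to $\Cone^*$, which excludes those directions.

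Finally I would treat the two endpoints. At $t=1$ one has $c_1 = c_2 = 0$, so $g_1 = A$; since $A$ is an integer matrix of determinant $1$ and trace $6$, it descends to a hyperbolic toral automorphism. At $t=0$ the origin is fixed, and evaluating $a,b$ at $\cos = 1$ gives $a = \tfrac1{10}$, $b = \tfrac12$, so
\[
    Dg_0(0,0) = \begin{pmatrix} \tfrac12 & 1 \\ \tfrac15 & \tfrac12 \end{pmatrix},
\]
which has trace $1$ and determinant $\tfrac1{20}$, hence eigenvalues $\tfrac{1}{2}\bigl(1 \pm \sqrt{4/5}\bigr) \in (0,1)$. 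Thus the origin is a sink, and since an Anosov diffeomorphism admits no sinks (its unstable bundle is nonzero at every point), $g_0$ is not Anosov.
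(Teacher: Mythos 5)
Your proposal is correct, and its skeleton is the same as the paper's: factor $Dg_t = A \cdot \operatorname{diag}(a,b)$ with $a \in [\tfrac{1}{10},\tfrac{19}{10}]$ and $b \in [\tfrac12,\tfrac32]$, check that the quadrant cone field is strictly invariant, and derive contraction of $\Es$ from the inverse matrix $A\inv$ together with the fact that $\Es$-vectors have coordinates of opposite sign. Where you genuinely differ is in how the contraction estimate is closed. The paper argues coordinate-wise: writing $(u_1,v_1) = Dg_t(u_0,v_0)$ for $(u_0,v_0) \in \Es$ and assuming $u_1 < 0 < v_1$, it gets $v_0 = c_y\inv(5v_1 - 2u_1) > 3v_1$, so the second coordinate contracts by a factor of $3$; it then needs a separate continuity-plus-compactness argument (uniform boundedness of the ratio $u/v$ on $\Es$) to conclude that the first coordinate contracts as well. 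You instead prove a uniform backward-expansion bound on the whole dual cone, $\|Dg_t\inv v\|^2 \ge 2\|v\|^2$ for $0 \ne v \in \Cone^*$, which does hold: for $uw \le 0$ one has $(u-2w)^2 \ge u^2+4w^2$ and $(2u-5w)^2 \ge 4u^2+25w^2$, and $\tfrac{100}{361}+\tfrac{16}{9} > 2$, so the estimate goes through with a little room to spare; since every forward iterate of an $\Es$-vector stays in $\Cone^*$, this yields $\|Dg_t^n|_{\Es}\| \le 2^{-n/2}$ immediately. Your route costs a slightly more careful quadratic-form computation but buys an explicit contraction rate and eliminates the compactness/ratio step, so it is, if anything, more self-contained. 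The remaining items agree: your explicit cone computation ($10a^2u^2 + 9ab\,uw + 2b^2w^2 \ge 0$, vanishing only at the origin) is exactly the ``routine multivariable calculus'' the paper omits, and your endpoint checks ($g_1 = A$; $Dg_0(0,0)$ has trace $1$ and determinant $\tfrac{1}{20}$, hence the origin is a sink, and Anosov diffeomorphisms admit no sinks) are what the statement requires.
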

\begin{proof}
    Most of this is routine multivariable calculus.
    The only minor difficulty is proving that $\Es$ is uniformly contracting.
    Suppose $(u_0,v_0)  \in  \bbR^2$ is a non-zero vector in the $\Es$ subbundle
    and write $(u_n,v_n) = Dg_t^n(u_0,v_0)$.
    Then
    \[
            \begin{pmatrix}
            u_1 \\
            v_1   \end{pmatrix}
        =
            \begin{pmatrix}
            5 & 2 \\
            2 & 1  \end{pmatrix}
            \begin{pmatrix}
            c_x & 0 \\
            0 & c_y   \end{pmatrix}
            \begin{pmatrix}
            u_0 \\
            v_0   \end{pmatrix}
    \]
    for some $c_x$, $c_y  \in  \bbR$ with $|c_x - 1| < \tfrac{9}{10}$
    and $|c_y - 1| < \tfrac{1}{2}$.
    Inverting gives
    \[
            \begin{pmatrix}
            u_0 \\
            v_0   \end{pmatrix}
        =
            \begin{pmatrix}
            c_x \inv & 0 \\
            0 & c_y \inv  \end{pmatrix}
            \begin{pmatrix}
            1 & -2 \\
            -2 & 5  \end{pmatrix}
            \begin{pmatrix}
            u_1 \\
            v_1   \end{pmatrix}
        .
    \]
    Since $\Es \cap \Cone = 0$, the product of $u_1$ and $v_1$ is negative.
    Assume without loss of generality that $u_1 < 0 < v_1$.
    Then
    \begin{math}
        v_0 = c_y \inv (5 v_1 - 2 u_1) > 3 v_1.
    \end{math}
    This shows that $v_n$ shrinks exponentially fast to zero
    as $n \to \infty$.
    For any point $p  \in  \bbT^2$ and non-zero vector $(u,v)$ in $\Es(p)$,
    the ratio $\tfrac{u}{v}$ is well defined and depends continuously on $x$.
    Therefore the ratio is uniformly bounded
    and so $u_n$ also converges exponentially quickly to zero.
\end{proof}
\section{Proof of \cref{thm:dacs}} \label{sec:proofdacs} 

We now construct the diffeomorphism in the conclusion 
of \cref{thm:dacs} and show that it is strongly partially hyperbolic.
Let $A:\bbT^2 \to \bbT^2$, $g_0:\bbT^2 \to \bbT^2$, and $\ep > 0$
be as in the theorem.
Choose constants $\ep/2 < a < b < c < d < e < \ep$.

We briefly give an intuitive description of the construction
before diving into the details.
The diffeomorphism $f$ will contract the region
$\bbT^2 \ti (-e,e)$ down towards $\bbT^2 \ti 0$.
In the region $\bbT^2 \ti [c,d]$,
a strong shear pushes
the vertical center direction
to be almost horizontal.
Then in the region $\bbT^2 \ti [a,b]$,
the dynamics in the horizontal direction
is changed from $A$ to $g_0$.
Finally in $\bbT^2 \ti [0,a)$,
the vertical contraction is dialled up
so that $\bbT^2 \ti 0$
is a normally attracting submanifold.
The effect of the dynamics on the $\Ecu$ and $\Eu$ subbundles
is shown in figure \ref{fig:planes}.
\begin{figure}
{
\vspace{1in}
\centering
\subfigure[The $\Ecu$ subbundle.]{\includegraphics{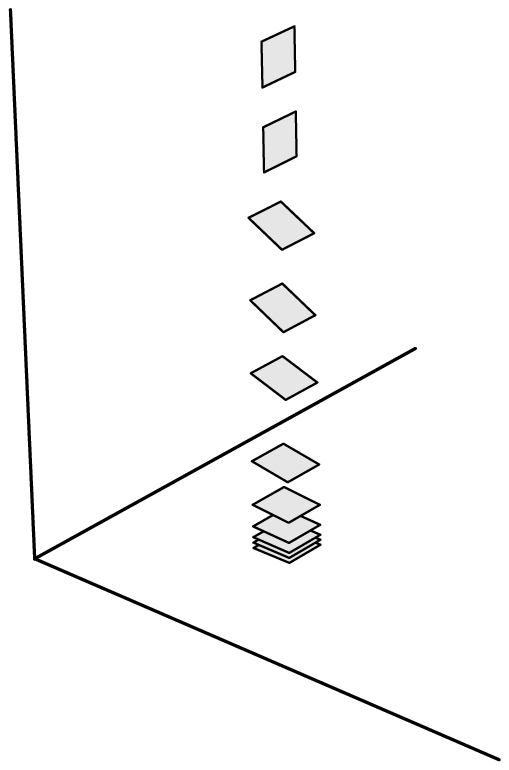}}
\subfigure[The $\Eu$ subbundle.]{\includegraphics{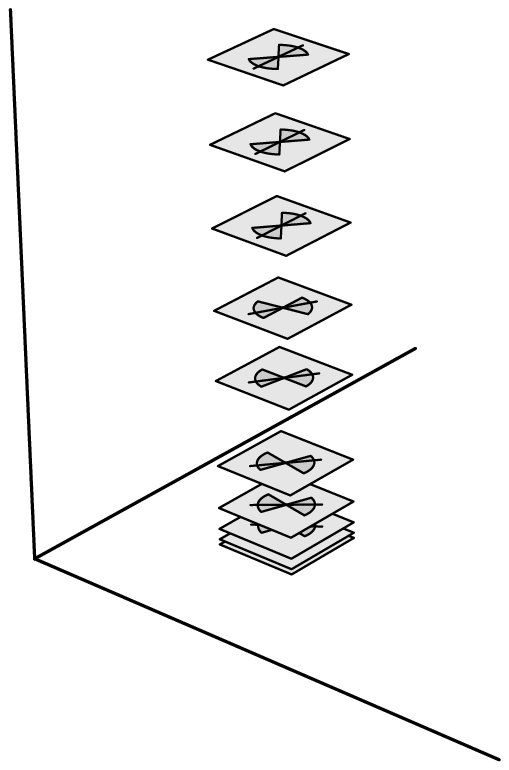}}
}
\vspace{0.5in}
\caption[Bundles under iteration]{
A depiction of the $\Ecu$ and $\Eu$ subbundles in the construction
given in this section.  
Consider a point $(x,s)  \in  \bbT^2 \ti (e,\ep)$
and its forward orbit $(x_n,s_n) := f^n(x,s)$.
For simplicity, we assume the sequence $\{x_n\}$ is constant.
The construction of $f$ ensures that $\{s_n\}$ decreases towards 0.
Subfigure (a) shows, for each $n  \ge  0$, the two-dimensional subbundle
$\Ecu(x_n,s_n)$.
When $s_n > d$, the $\Ecu$ subbundle is vertical.
When $c < s_n < d$, a shearing effect in the dynamics
pushes the $\Ecu$ planes to be
closer to horizontal. When $n$ is large and therefore $0 < s_n < a$,
the strong vertical contraction means the slopes of these planes tend to zero as
$n$ tends to $+\infty$.
Subfigure (b) shows, for each $n  \ge  0$,
the one-dimensional subbundle $\Eu(x_n,s_n)$
lying inside the horizontal
plane $T_{x_n} \bbT^2 \ti 0$.
It also depicts 
the cone field $\Cone(x_n,s_n)$ determined by \cref{prop:slowcone}.
Both the horizontal planes and $\Eu$
are unaffected by the shearing.
In the region $\bbT^2 \ti [a,b]$, the $\Eu$ direction moves around as different horizontal
maps $g(\cdot, t)$ are applied.
However, the $\Eu$ direction always stays within the cone field $\Cone$.
}
\label{fig:planes}
\end{figure}

Let $g:\bbT^2 \ti [0,1] \to \bbT^2$
be a $C^1$ function as in \cref{prop:isodom}.
By a reparameterization of the [0,1] coordinate,
assume without loss of generality that
$g(\cdot,t) = g_0$ for all $t  \le  a$
and $g(\cdot,t) = A$ for all $t  \ge  b$.
With $g$ now determined,
let $\eta > 0$ be as in \cref{prop:slowcone}.

Fix a value $\lam  \in  (0,1)$ such that
\begin{math}
    \|D g_0 v\| > 2 \lam
\end{math}
for all unit vectors $v$ in the tangent bundle of $\bbT^2$.
Define a smooth diffeomorphism $h:[0,\ep] \to [0,\ep]$
with the following properties.
\begin{enumerate}
    \item $h(s) = \lam s$ for all $s  \in  [0,\ep/2]$,

    \item $h(s) < s$ for all $s  \in  (0,e)$,

    \item $|h(s) - s| < \eta$ for all $s  \in  [a,b]$,

    \item $h^2(d) < c < h(d)$, and

    \item $h(s) = s$ for all $s  \in  [e,\ep]$.
\end{enumerate}
Define a smooth bump function $\rho:[0,\ep] \to [0,1]$
such that
\begin{enumerate}
    \item $\rho(s) = 0$ for all $s  \in  [0,c]$,

    \item $\rho'(s) > 0$ for all $s  \in  (c,d)$, and

    \item $\rho(s) = 1$ for all $s  \in  [d,\ep]$.
\end{enumerate}
Let $z$ be a non-zero element of $\bbZ^2$.
The precise conditions for choosing $z$
will be given later in this section.

With these objects in place, define
$f$ for $(x,s)  \in  \bbT^2 \ti [0,\ep]$ by
\[
    f(x,s) = (g(x,s) + \rho(s) \cdot z, h(s)).
\]
Extend $f$ to all of $\bbT^2 \times [-\ep,\ep]$
by the requirement that
$(y,t) = f(x,s)$ if and only if $(y,-t) = f(x,-s)$.
Finally, set
$f(x,s) = (A(x),s)$
for all $(x,s) \notin \bbT^2 \times [-\ep,\ep]$.

\medskip{}

We now consider the effect of $Df$ on vectors of the tangent bundle.
The identity $\bbT^3 = \bbT^2 \ti \bbS$
means that, for a point $p = (x,s)$,
a tangent vector $u  \in  T_p \bbT^3$ may be decomposed
as $u = (v,w)$
with $v  \in  T_x \bbT^2$ and $w  \in  T_s \bbS$.
During the proof, we will routinely
write vectors this way and refer to $v$ and
$w$ as the horizontal and vertical components
of $u$.
The linear toral automorphism $A$
gives a linear splitting
$T_x \bbT^2 = \Eu_A(x) \oplus \Es_A(x)$
which further defines subspaces
$\Eu_A(x) \ti 0$ and $\Es_A(x) \ti 0$ of $T_p \bbT^3$.
Also, if $\Cone(p) = \Cone(x,s) \subof T_x \bbT^2$
is the cone given by \cref{prop:slowcone},
then $\Cone(p) \ti 0$ may be considered
as a subset of $T_p \bbT^3$.

\begin{lemma} \label{lemma:highu}
    If $p = (x,s)  \in  \bbT^2 \ti [c,\ep]$
    and $y  \in  \bbT^2$ is such that $Df(p) = (y, h(s))$, 
    then
    \begin{math}
        Df_p(\Eu_A(x) \ti 0) = \Eu_A(y) \ti 0.
    \end{math}  \end{lemma}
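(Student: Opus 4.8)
The plan is to use the fact that on the slab $\bbT^2 \ti [c,\ep]$ the horizontal dynamics is exactly the linear map $A$, and that the shear term $\rho(s)\cdot z$ feeds only the vertical direction into the horizontal one, so that horizontal vectors never see it. First I would record the simplification of the defining formula. Since the constants satisfy $c > b$ and, after the reparameterization, $g(\cdot,t) = A$ for all $t \ge b$, every $s \in [c,\ep]$ has $g(\cdot,s) = A$. Hence on this region
\[
    f(x,s) = \bigl(A(x) + \rho(s)\cdot z,\ h(s)\bigr),
\]
and in particular the point $y$ with $f(p) = (y,h(s))$ is $y = A(x) + \rho(s)\cdot z$.

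Next I would compute $Df_p$ in the horizontal/vertical decomposition $u = (v,w)$, with $v \in T_x\bbT^2$ and $w \in T_s\bbS$. Because $z \in \bbZ^2$ is constant and $\rho$ depends only on $s$, the derivative takes the block form
\[
    Df_p(v,w) = \bigl(A v + \rho'(s)\,w\,z,\ h'(s)\,w\bigr).
\]
The decisive observation is that a purely horizontal vector $(v,0)$ is sent to $(Av,0)$: the shear contribution $\rho'(s)\,w\,z$ vanishes precisely because its vertical input $w$ is zero. Thus $Df_p$ maps $\Eu_A(x) \ti 0$ into the horizontal plane as $A(\Eu_A(x)) \ti 0$, with no coupling to the vertical fiber.

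Finally I would invoke the translation-invariance of the unstable bundle of a linear automorphism. Since $A$ is linear, its lift to $\bbR^2$ has a single, constant unstable eigenspace, so under the canonical identification $T_q\bbT^2 \cong \bbR^2$ the subspace $\Eu_A(q)$ is the same line at every basepoint $q$. Consequently $A(\Eu_A(x)) = \Eu_A(A(x)) = \Eu_A(y)$, and as $A$ restricts to a linear isomorphism of this line onto itself we conclude $Df_p(\Eu_A(x) \ti 0) = \Eu_A(y) \ti 0$, as claimed.

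As for obstacles, there is essentially no hard step—the content is bookkeeping about which piece of the construction is active and how the block derivative acts. The only points requiring a moment's care will be confirming that $s \ge c$ genuinely forces $g(\cdot,s) = A$ (rather than some intermediate $g(\cdot,t)$), so that the horizontal part is linear, and the standard fact that the stable and unstable bundles of a hyperbolic toral automorphism are constant across the torus, which is exactly what lets us identify $A(\Eu_A(x))$ with $\Eu_A(y)$ even though the shear has displaced the basepoint by $\rho(s)\cdot z$.
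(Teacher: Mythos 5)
Your proposal is correct and follows essentially the same route as the paper: identify that on $\bbT^2 \ti [c,\ep]$ the map is $f(x,s) = (A(x) + \rho(s)z,\, h(s))$, and then observe that both $A$ and the translation by $\rho(s)z$ preserve the constant linear unstable bundle of $A$. Your explicit block computation of $Df_p$ is just a more detailed rendering of the paper's one-line argument.
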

\begin{proof}
    In this region,
    $f$ is given by
    \begin{math}
        f(x,s) = (A(x) + \rho(s) z, h(s))
    \end{math}
    and both $A$ and the translation $x \mapsto x + \rho(s) z$
    leave the linear unstable foliation of $A$ invariant.
\end{proof}
\begin{lemma} \label{lemma:lowcone}
    If $p  \in  \bbT^2 \ti [0,c]$,
    then
    \begin{math}
        Df(\Cone(p) \ti 0) \subof \Cone(f(p)) \ti 0.
    \end{math}  \end{lemma}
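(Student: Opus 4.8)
The plan is to exploit that the shear is switched off throughout this region and to reduce everything to the two–dimensional cone invariance of \cref{prop:slowcone}. For $s\in[0,c]$ one has $\rho(s)=0$, so $f(x,s)=(g(x,s),h(s))$ and, for a horizontal vector $(v,0)\in T_x\bbT^2\ti 0$,
\[
    Df_p(v,0)=\bigl(D_x g(\cdot,s)\,v,\,0\bigr),
\]
the vertical component vanishing because the input has no vertical part and the height coordinate $h(s)$ of $f$ is independent of $x$. Thus $Df_p$ preserves the horizontal subspace $T_x\bbT^2\ti 0$, and since $f(p)=(g(x,s),h(s))$ the lemma reduces to the purely horizontal inclusion
\[
    D_x g(\cdot,s)\bigl(\Cone(x,s)\bigr)\subof \Cone\bigl(g(x,s),h(s)\bigr).
\]

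I would then read this off from \cref{prop:slowcone}. Under one application of $g(\cdot,s)$ the cone $\Cone(x,s)$ is carried into the interior of the cone at the image point and the same height, $\Cone(g(x,s),s)$; the only discrepancy with the inclusion above is that the destination height is $h(s)$ rather than $s$. To absorb this I would use that $\Cone(\cdot,t)$ depends on $t$ only through the splitting $\Ec(\cdot,t)\oplus\Eu(\cdot,t)$, which is locally constant in $t$ off the band $[a,b]$: it is the $g_0$–splitting for $t\le a$ and the $A$–splitting for $t\ge b$. Hence whenever $s$ and $h(s)$ lie on the same side of this band --- in particular for $s\le a$, where $h(s)<s\le a$ --- the cones $\Cone(g(x,s),h(s))$ and $\Cone(g(x,s),s)$ coincide and the strict one-step inclusion already suffices.

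On the remaining heights, where $s$ or $h(s)$ meets $[a,b]$, the defining property $|h(s)-s|<\eta$ of $h$ is exactly what is needed: $\eta$ is the constant from \cref{prop:slowcone} controlling how far the destination height may be displaced while the image $D_x g(\cdot,s)(\Cone(x,s))$, sitting strictly inside $\Cone(g(x,s),s)$, still stays within the neighbouring cone $\Cone(g(x,s),h(s))$. The point I expect to need the most care is precisely this matching of the height $s$ at which $g(\cdot,s)$ acts with the height $h(s)$ of the target cone, most delicately for $s$ slightly above $b$: there $h(s)$ dips just below $b$, so the destination is a genuine transition cone even though the applied map is already the linear map $A$. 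Handling it rests on the strictness (interiority) of the one-step inclusion together with the uniform continuity of $(x,t)\mapsto\Cone(x,t)$, which let the fixed amount of room in the inclusion absorb the small, $\eta$–controlled change of height.
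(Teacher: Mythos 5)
Your proof is correct and is essentially the paper's own argument: the paper's entire proof of this lemma is the single sentence that it ``follows directly from the use of $\eta > 0$ in the definition of $f$,'' i.e., precisely the combination of property (3) of \cref{prop:slowcone} with the requirement $|h(s)-s| < \eta$ on $[a,b]$ that you spell out. Your extra case analysis---the cone field being constant in the parameter off the band $[a,b]$, and the delicate case $s > b$ with $h(s) < b$---just fills in details that the paper leaves implicit.
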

\begin{proof}
    This follows directly from the use of $\eta > 0$
    in the definition of $f$.
\end{proof}
\begin{lemma} \label{lemma:usplit}
    $f$ has a dominated splitting of the form
    $\Eu \oplus_> \Ecs$
    with $\dim \Eu = 1$.
\end{lemma}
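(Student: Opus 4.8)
The plan is to produce the splitting by invoking \cref{thm:chaindom} (in this sheared setting \cref{thm:genchaindom} works equally well), so that I only need to exhibit the one-dimensional bundle $\Eu$ on a set carrying the chain-recurrent dynamics and then verify an accumulation condition on the remaining orbits. First I would take
\[
    Z = \bigl( \bbT^2 \ti \{ s : |s| \ge e \} \bigr) \cup \bigl( \bbT^2 \ti 0 \bigr),
\]
which is compact and invariant because the vertical dynamics $s \mapsto h(s)$ fixes $\{|s|\ge e\}\cup\{0\}$ and strictly contracts each component of $(-e,e)\sans 0$ toward $0$; since this interval map is gradient-like, every chain-recurrent point of $f$ lies in $Z$. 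Here I work in the case relevant to the statement, where $g_0$ carries a one-dimensional expanding bundle $\Eu_{g_0}$ dominating its center $\Ec_{g_0}$. On $\bbT^2 \ti 0$ the map is $(g_0,\lam s)$ near $s=0$, so I set $\Eu = \Eu_{g_0}\ti 0$ and $\Ecs = \Ec_{g_0}\ti 0 \oplus T\bbS$; on each torus with $|s|\ge e$ the map is $A$ with neutral fiber, so I set $\Eu = \Eu_A\ti 0$ and $\Ecs = \Es_A\ti 0 \oplus T\bbS$. The choice of $\lam$ with $\|Dg_0 v\| > 2\lam$ guarantees that the expanding horizontal direction dominates both the center of $g_0$ and the vertical $\lam$-contraction, so $T_Z M = \Eu \oplus_> \Ecs$ is a dominated splitting with $\dim\Eu = 1$.

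It remains to handle an orbit $(x_n,s_n)=f^n(x,s)$ with $0<|s|<e$; by symmetry assume $s>0$. The key elementary observation is that $Df$ preserves the horizontal subbundle $T\bbT^2\ti 0$ and acts on it by $D_x g(x,s)$: a vector with zero vertical component is untouched by both the shear $\rho(s)z$ and the reparameterization $h$. Since the backward orbit increases monotonically to $e$ and the forward orbit decreases monotonically to $0$, I would pick $y=(x_y,s_y)$ in the orbit with $s_y\in(c,e)$ and set $V_y=\Eu_A(x_y)\ti 0$.

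For the backward accumulation, every $s_{-n}$ lies in $[c,e)$, so repeated use of \cref{lemma:highu} keeps $v^{-n}$ inside $\Eu_A(x_{-n})\ti 0$; as $(x_{-n},s_{-n})\to\bbT^2\ti e\subof Z$, where $\Eu=\Eu_A\ti 0$, the sequence $v^{-n}$ accumulates on $\Eu$. For the forward accumulation, \cref{lemma:highu} keeps $v^n\in\Eu_A(x_n)\ti 0$ as long as $s_n$ stays above the shearing region; once the orbit drops through the transition zone $[a,b]$, the bound $|h(s)-s|<\eta$ together with \cref{prop:slowcone}(3) makes the cone field $\Cone(\cdot,s)$ forward invariant, which is exactly \cref{lemma:lowcone}, and at the moment of entry $v^n$ is still the vector $\Eu_A\subof\Cone$. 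Hence $v^n\in\Cone(x_n,s_n)\ti 0$ for all later $n$. Finally, for $s_n\le a$ the horizontal dynamics is genuinely $g_0$, and since $v^n$ is transverse to $\Ec_{g_0}$ (it lies in the interior of the $g_0$-unstable cone), domination of $\Eu_{g_0}$ over $\Ec_{g_0}$ forces $v^n$ to converge to $\Eu_{g_0}(x_n)\ti 0 = \Eu$ as $s_n\to 0$. Thus both $v^n$ and $v^{-n}$ accumulate on $\Eu$, and \cref{thm:chaindom} extends the splitting on $Z$ to a dominated splitting $\Eu\oplus_>\Ecs$ on all of $\bbT^3$.

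The step I expect to be most delicate is the entry into the cone field: I must know that when $s_n$ first crosses below the shearing region the vector $v^n$, which is still equal to $\Eu_A$, actually lies in $\Cone(x_n,s_n)$. This is immediate while $s_n\ge b$, since there $\Eu_A=\Eu(x_n,s_n)$ sits in its own cone, so the real content is that the construction of $h$ — the soft landing encoded in $h^2(d)<c<h(d)$ and the slow passage $|h(s)-s|<\eta$ on $[a,b]$ — prevents the orbit from jumping across $[b,c]$ in a single step, so that some iterate genuinely lands in $[b,c]$ with $v^n=\Eu_A$ still inside the cone before \cref{lemma:lowcone} takes over. Verifying this, and confirming that domination of $g_0$ collapses the cone onto $\Eu_{g_0}$ rather than merely keeping $v^n$ trapped, are the two places where the quantitative choices in the construction must be used.
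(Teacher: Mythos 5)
Your proposal is correct and follows essentially the same route as the paper's proof: the paper likewise applies \cref{thm:genchaindom} (with $Y=\bbT^2\ti[0,e]$ and $Z=\bbT^2\ti\{0,e\}$), takes $V_p=\Eu_A(x)\ti 0$ at a point of the orbit shifted into a fundamental domain, and uses \cref{lemma:highu} on the backward orbit and the cone field of \cref{prop:slowcone} via \cref{lemma:lowcone} on the forward orbit. The only differences are cosmetic: the paper shifts the orbit to $s\in[h(c),c]$ rather than $(c,e)$, so that both lemmas apply along the entire half-orbits at once (sidestepping the ``entry into the cone'' step you flag, which in any case is handled by monotonicity of $h$ and $|h(b)-b|<\eta$ rather than by the orbit landing in $[b,c]$), and because the paper invokes \cref{thm:genchaindom} it can stop once the forward limit vectors are known to lie in the cone, hence off $\Ecs$, without your final cone-contraction argument that they converge to $\Eu_{g_0}\ti 0$.
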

\begin{proof}
    We will apply \cref{thm:genchaindom}
    with $Y = \bbT^2 \ti [0,e]$ and $Z = \bbT^2 \ti \{0,e\}$.
    Note that $Z$ has a well-defined partially hyperbolic splitting.
    If $p = (x,e)  \in  \bbT^2 \ti e$,
    then $\Eu_f(p) = \Eu_A(x) \ti 0$.
    If $p = (x,0)  \in  \bbT^2 \ti 0$,
    then
    $\Eu_f(p) = \Eu_{g_0}(x) \ti 0$.

    Consider an orbit $\{f^n(p)\}_{n  \in  \bbZ}$ where $p  \in  \bbT^2 \times (0,e)$.
    Up to shifting along the orbit,
    one may assume
    $p = (x,s)$
    with
    $s  \in  [h(c),c]$.
    Define $V_p \subof T_p \bbT^3$ by $V_p = \Eu_A(x) \ti 0$
    and let $u$ be a non-zero vector in $V_p$.
    Write $p_n = (x_s$, $s_n) = f^n(p)$ for all $n  \in  \bbZ$.
    First, consider the backwards orbit of $u$.
    By \cref{lemma:highu},
    \begin{math}
        u^{-m}  \in  \Eu_A(x_{-m}) \ti 0
    \end{math}
    for all $m > 0$.
    For a subsequence $\{m_j\}$,
    if $p_{-m_j}$ converges to a point $p_- = (x_-,e)$,
    then  $u^{-m_j}$ converges to a vector in
    $\Eu_A(x_-) \ti 0 = \Eu_f(p_-)$.

    Now consider the forward orbit of $u$.
    By \cref{lemma:lowcone},
    $u^n  \in  \Cone(x_n$, $s_n) \ti 0$
    for all $n > 0$.
    If a subsequence $\{p_{n_j}\}$
    converges to a point 
    $p_+ = (x_+,0)$
    and $v^{n_j}$ converges to a vector
    $v_+  \in  T_x \bbT^2 \ti 0$,
    then $v_+  \in  \Cone(p_+) \ti 0$.
    In particular,
    $v_+$ does not lie in $\Es_A(x_+) \ti 0$.

    This shows that the conditions of \cref{thm:genchaindom}
    are satisfied and a dominated splitting exists on all of
    $\bbT^2 \ti [0,e]$.
    By symmetry,
    a dominated splitting exists on $\bbT^2 \ti [-e,0]$.
    Since $f$ is linear outside of $\bbT^2 \ti [-e,e]$,
    there is a global dominated splitting
    on all of $\bbT^3$.
\end{proof}

%
%
%

For a non-zero vector $u  \in  T \bbT^3$ with
horizontal component $v  \in  T \bbT^2$ and
vertical component $w  \in  T \bbS$,
define the \emph{slope} of $u$ by
\[
    \slope(u) =
    \frac{\|w\|}
        {\|v\|}  \in  [0,\infty].
\]
Note that $f$ maps a horizontal torus $\bbT^2 \ti s$
to a horizontal torus $\bbT^2 \ti h(s)$
and therefore $\slope(u) = 0$ implies that $\slope Df(u) = 0$.

\begin{lemma} \label{lemma:halfslope}
    If $p  \in  \bbT^2 \ti [0,\tfrac{\ep}{2}]$
    and $u  \in  T_p \bbT^3$
    with $\slope(u) < \infty$,
    then
    \[
        \slope Df(u) < \tfrac{1}{2} \slope(u).
    \]  \end{lemma}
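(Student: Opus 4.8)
The plan is to unwind the definition of $f$ on the slab $\bbT^2 \ti [0, \tfrac{\ep}{2}]$, where every auxiliary function takes its simplest form, and then carry out a direct slope computation using the choice of $\lam$.

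First I would observe that on $\bbT^2 \ti [0, \tfrac{\ep}{2}]$ the ordering $\tfrac{\ep}{2} < a < b < c$ forces all the cutoffs into their extreme positions. Indeed, property (1) of $\rho$ gives $\rho(s) = 0$ for $s \le c$ and hence on the whole slab; the reparameterization of $g$ gives $g(\cdot,s) = g_0$ for $s \le a$ and hence on the slab; and property (1) of $h$ gives $h(s) = \lam s$ for $s \in [0,\tfrac{\ep}{2}]$. Consequently, on this region $f$ reduces to
\[
    f(x,s) = (g_0(x), \lam s),
\]
a map whose horizontal component carries no dependence on $s$.

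Next, writing $u = (v,w)$ in horizontal and vertical components, I would compute $Df_p(u)$ from this simplified form. Since the horizontal part $g_0(x)$ does not depend on $s$, the derivative in the $w$-direction contributes nothing horizontally, so the horizontal component of $Df_p(u)$ is $Dg_0 v$; the vertical part $h(s) = \lam s$ contributes the vertical component $\lam w$. Therefore
\[
    \slope Df(u) = \frac{\lam \|w\|}{\|Dg_0 v\|}.
\]
Because $\slope(u) < \infty$, the horizontal component $v$ is non-zero, so this expression is well defined.

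Finally I would invoke the choice of $\lam \in (0,1)$, made precisely so that $\|Dg_0 v\| > 2\lam$ for every unit tangent vector $v$; by homogeneity $\|Dg_0 v\| > 2\lam \|v\|$ for all non-zero $v$. Substituting this bound gives
\[
    \slope Df(u) = \frac{\lam \|w\|}{\|Dg_0 v\|} < \frac{\lam \|w\|}{2\lam \|v\|} = \tfrac{1}{2}\,\slope(u),
\]
as claimed. There is no genuine obstacle in this argument; the only point demanding care is verifying that the chain of inequalities among $\tfrac{\ep}{2}, a, b, c, d, e, \ep$ really does collapse $\rho$ and $g$ to their simplest forms across the entire slab, so that the shear term $\rho(s)z$ vanishes and the horizontal dynamics is exactly $g_0$.
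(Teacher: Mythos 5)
Your proof is correct and is essentially the paper's own argument: the paper disposes of this lemma with the one-line remark that it ``follows from the choice of $\lam$,'' and your computation simply makes explicit why, namely that the ordering $\tfrac{\ep}{2} < a < c$ collapses $f$ on the slab to $f(x,s) = (g_0(x), \lam s)$, whence $\slope Df(u) = \lam\|w\|/\|Dg_0 v\| < \tfrac{1}{2}\slope(u)$ by the bound $\|Dg_0 v\| > 2\lam\|v\|$. The only caveat, shared with the paper's statement itself, is that the strict inequality degenerates when $w = 0$ (both slopes are then zero), a harmless edge case that neither your argument nor the paper's addresses.
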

\begin{proof}
    This follows from the choice of $\lam$
    at the start of the section.
\end{proof}
\begin{lemma} \label{lemma:slopects}
    There is $k  \ge  1$ and $\delta > 0$ such that
    if $p  \in  \bbT^2 \ti [h^3(d),h^2(d)]$ and
    $u  \in  T_p \bbT^3$ with
    $\slope(u) < \delta$,
    then
    $f^k(p)  \in  \bbT^2 \ti [0,\tfrac{\ep}{2}]$
    and
    $\slope Df^k(u) < 1$.
\end{lemma}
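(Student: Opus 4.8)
The plan is to decouple the two conclusions: the vertical descent $f^k(p)\in\bbT^2\ti[0,\tfrac{\ep}{2}]$ fixes $k$, and the slope estimate then fixes $\delta$. First I would write $Df$ in coordinates. Decomposing a tangent vector as $u=(v,w)$ with $v$ horizontal and $w$ vertical, the formula $f(x,s)=(g(x,s)+\rho(s)z,\,h(s))$ gives
\[
    Df(v,w)=\bigl(\partial_x g\cdot v+(\partial_s g+\rho'(s)z)\,w,\; h'(s)\,w\bigr).
\]
Since the vertical output depends only on $w$, horizontal vectors stay horizontal, so $\slope(u)=0$ forces $\slope(Df\,u)=0$. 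To fix $k$ I would use only the vertical map $s\mapsto h(s)$: because $h(s)<s$ on $(0,e)$ with no fixed point in $(0,c)$, the iterates $h^n(d)$ decrease monotonically to $0$. A point $p\in\bbT^2\ti[h^3(d),h^2(d)]$ has vertical coordinate $s_0\le h^2(d)$, and since $h$ is increasing, its $n$-th iterate has vertical coordinate $s_n\le h^{n+2}(d)$; choosing $k\ge 1$ with $h^{k+2}(d)\le\tfrac{\ep}{2}$ then yields $f^k(p)\in\bbT^2\ti[0,\tfrac{\ep}{2}]$. Crucially, as $s_0<c$ and $\{s_n\}$ is strictly decreasing, the entire forward orbit stays below $c$, where $\rho'\equiv 0$, so the shearing term $\rho'(s)z$ never appears. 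This $k$ depends only on $h,d,\ep$.

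With the shear absent along the orbit, I would establish a one-step slope bound. On the compact region $\bbT^2\ti[0,h^2(d)]$ set $m_0=\min m(\partial_x g)$, which is positive since each $g(\cdot,s)$ is a diffeomorphism, together with $M_0\ge\max\|\partial_s g\|$ (positive) and $H_0=\max|h'|$. Then for $\slope(u)=\sigma<m_0/M_0$,
\[
    \slope(Df\,u)=\frac{|h'(s)|\,\|w\|}{\|\partial_x g\cdot v+\partial_s g\cdot w\|}
    \le\frac{H_0\,\sigma}{m_0-M_0\,\sigma}=:\Phi(\sigma),
\]
because the denominator is at least $m_0\|v\|-M_0\|w\|$. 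The function $\Phi$ is continuous and increasing on $[0,m_0/M_0)$ with $\Phi(0)=0$, hence each iterate $\Phi^j$ is continuous with $\Phi^j(0)=0$.

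Finally I would choose $\delta$. Iterating the bound along the forward orbit (valid since every $s_n<c$) gives $\slope(Df^j u)\le\Phi^j(\slope u)$ for $0\le j\le k$, as long as the intermediate values remain below $m_0/M_0$. Since $\max_{0\le j\le k}\Phi^j(\delta)\to 0$ as $\delta\to 0$, I pick $\delta>0$ small enough that this maximum is less than $\min(1,m_0/M_0)$; then $\slope(Df^k u)<1$, as required. I expect the main obstacle to be the passage through the band $\bbT^2\ti[a,b]$, where $g$ genuinely depends on $s$ and the cross-term $\partial_s g\cdot w$ can enlarge the slope — this is precisely what the lower conorm bound $m_0$ controls, keeping $\Phi(\sigma)$ small for small $\sigma$. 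The only other delicate point, that no shearing interferes, is handled automatically by the monotone descent confining the orbit below $c$.
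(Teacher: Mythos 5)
Your argument is correct, and its skeleton is the same as the paper's: fix $k$ first using the monotone descent of $h$ (any $s \le h^2(d)$ reaches $[0,\tfrac{\ep}{2}]$ after a uniform number of iterates), then choose $\delta$ to control slopes. The difference lies in how the slope estimate is obtained. The paper never writes down $Df$: it notes that horizontal vectors have horizontal images, defines $\gam(v) = \slope Df^k(v)$ on the compact set $K$ of unit vectors over $\bbT^2 \ti [h^3(d),h^2(d)]$, observes that $\gam$ vanishes on the slope-zero subset $K_0$, and extracts $\delta$ from uniform continuity of $\gam$ as a map into $[0,\infty]$. You instead compute $Df$ in coordinates, prove the one-step recursion $\slope(Df\,u) \le \Phi(\slope u)$ with $\Phi(\sigma)=H_0\sigma/(m_0-M_0\sigma)$, and iterate it $k$ times; this is longer but quantitative, producing an explicit admissible $\delta$ in terms of $m_0$, $M_0$, $H_0$, and $k$. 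Your route also makes explicit a point the soft argument leaves implicit: the forward orbit of $\bbT^2\ti[h^3(d),h^2(d)]$ stays below $c$, where $\rho'\equiv 0$, so the shear term $\rho'(s)z$ never enters, and hence $k$ and $\delta$ are independent of $z$. That independence is essential to the overall logic (in \cref{lemma:chooz} the paper chooses $z$ only after $\delta$ is fixed), and the paper must record it in a separate remark immediately after the lemma, using exactly your observation that $h^2(d)<c$ confines the relevant orbits; in your write-up it comes for free. What the paper's version buys is brevity and robustness: two sentences of compactness, no computation, and no need for the conorm bound $m_0$ or any tracking of the band $\bbT^2\ti[a,b]$.
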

\begin{proof}
    Since $h(s) < s$ for all $s  \in  (0,e)$,
    there is $k  \ge  1$ so that
    $s < h^2(d)$ implies $h^k(s) < \ep/2$.
    Let $K$ be the compact set of all unit vectors
    based at points in
    $\bbT^2 \ti [h^3(d),h^2(d)]$,
    and let $K_0 \subof K$ be those vectors with slope zero.
    Define
    \[
        \gam:K \to [0,\infty], \, v \mapsto \slope Df^k(v).
    \]
    Since $\gam(K_0) = \{0\}$
    and $\gam$ is uniformly continuous,
    one may find $\delta > 0$ as desired.
\end{proof}
Since $h^2(d) < c$,
the choice of $z  \in  \bbZ^2$ does not affect the
definition of $f$ in the region $\bbT^2 \ti [0$, $h^2(d)]$.
Hence, the values $k$ and $\delta$ may be determined before specifying $z$.
The next lemma, however, does rely on this choice
and the conditions on $z$ are given in the lemma's proof.

\begin{lemma} \label{lemma:chooz}
    For any $\delta > 0$, the $z  \in  \bbZ^2$ used in the definition of $f$
    may be chosen such that the following property holds{:}

    If $p = (x,s)  \in  \bbT^2 \ti [h(d),d]$
    and $u \in \Eu_A(x) \ti T_s \bbS \subof T_p \bbT^3$,
    then $\slope Df^2(u) < \delta$.
\end{lemma}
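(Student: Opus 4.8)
The plan is to reduce the statement to a direct computation of $Df^2$ on the shearing band and then invoke \cref{lemma:zAz}. Write $u = (v,w)$ with $v \in \Eu_A(x)$ and $w \in T_s\bbS$. If $w = 0$ then $u$ is horizontal and $\slope Df^2(u) = 0 < \delta$, since $f$ maps horizontal tori to horizontal tori, so I may assume $w \ne 0$. Because $c < h(d)$ and (by the choice of constants, arranging $b < h^2(d) < c$) the heights $s \in [h(d),d]$ and $h(s) \in [h^2(d), h(d)]$ both lie in $[b,\ep]$, the map $g$ equals $A$ at both $p$ and $f(p)$. On this band $f(x,s) = (Ax + \rho(s)z,\, h(s))$, and since $A$ is linear the direction $\Eu_A(x)$ is a single line $\Eu_A$ independent of $x$; write $Av = \mu v$ for $v \in \Eu_A$ with $\mu > 1$.

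Next I would iterate twice. Writing $w = \omega\,\partial_s$ with $|\omega| = \|w\|$, one application of $Df$ sends $u$ to horizontal part $\mu v + \rho'(s)\,\omega z$ and vertical part $h'(s)\,w$. Applying $Df$ again at $f(p)$, where $g = A$ once more and the fresh shear contributes $\rho'(h(s))\,z$ acting on the vertical component, gives
\[
    Df^2(u) = \bigl(\mu^2 v + \omega(\zeta\, Az + \xi\, z),\ h'(h(s))\,h'(s)\,w\bigr),
\]
where $\zeta = \rho'(s) \ge 0$ and $\xi = \rho'(h(s))\,h'(s) \ge 0$. The essential structural point is that the two shears deposit into the horizontal component a vector lying in the cone spanned by $z$ and $Az$ with nonnegative coefficients, which is exactly the combination controlled by \cref{lemma:zAz}.

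Then I would bound the slope. Since $\mu^2 v \in \Eu_A$, the horizontal component $v_2$ of $Df^2(u)$ satisfies $\dist(v_2, \Eu_A) = |\omega|\,\dist(\zeta Az + \xi z,\ \Eu_A)$, so choosing $z$ via \cref{lemma:zAz} for a constant $C > 1$ yields $\|v_2\| \ge \dist(v_2, \Eu_A) \ge C(\zeta+\xi)\,\|w\|$. The vertical component has norm at most $H^2\|w\|$ with $H = \sup|h'|$. Hence $\slope Df^2(u) \le H^2 / \bigl(C(\zeta+\xi)\bigr)$, a bound uniform in both $v$ and $w$. It remains to bound $\zeta + \xi$ below: the function $s \mapsto \rho'(s) + \rho'(h(s))\,h'(s)$ is continuous and strictly positive on the compact interval $[h(d),d]$, for its only possible zero is at $s=d$, where $\rho'(d)=0$ but $h(d) \in (c,d)$ forces $\rho'(h(d)) > 0$ and so $\xi > 0$. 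Thus $\zeta+\xi \ge m_0 > 0$, and taking $C > H^2/(\delta m_0)$ with the corresponding $z$ completes the proof.

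The main obstacle is precisely this uniform lower bound $\zeta + \xi \ge m_0$, i.e.\ ensuring the shear does not degenerate simultaneously at height $s$ and at its image $h(s)$ anywhere in the band; the endpoint $s = d$, where the first shear derivative vanishes, is exactly why two iterates are needed, and why \cref{lemma:zAz} is stated for the full combination $\zeta Az + \xi z$ rather than for $z$ alone. A secondary point to verify is that both iterates stay in the region where $g = A$ (guaranteed by choosing $h$ with $b < h^2(d) < c < h(d)$) and that \cref{lemma:zAz} applies to $A$, passing to a power if necessary to arrange the orientation hypothesis on the stable bundle.
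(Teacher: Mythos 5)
Your proof is correct and is essentially the paper's own argument: on $\bbT^2 \ti [h(d),d]$ the horizontal part of $Df^2(u)$ is $A^2(v) + \zeta A(z) + \xi z$ with $\zeta, \xi \ge 0$, \cref{lemma:zAz} gives the lower bound $C(\zeta+\xi)$ on its norm, compactness gives $\zeta+\xi \ge m_0 > 0$ (the paper's constant $\al$, whose positivity you justify in more detail than the paper does), and $C$ is then taken large. Two asides: your explicit arrangement $b < h^2(d) < c$, which guarantees $g(\cdot,h(s)) = A$ at the second iterate, is a point the paper's stated conditions on $h$ leave implicit; and your fallback of passing to a power of $A$ to secure the orientation hypothesis of \cref{lemma:zAz} is neither needed (in the setting of \cref{thm:dacs}, $g_0$ preserving its center orientation already forces $A$ to preserve its stable orientation) nor actually available, since $A$ is fixed by the construction.
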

\begin{proof}
    Write $u = (v,w)$ as before.
    If $w = 0$, then $\slope Df^2(u) = 0$.
    Therefore, one need only consider the case where
    $w$ is non-zero.
    Up to rescaling the vector $u$,
    assume $w$ is a unit vector pointing in the ``up''
    direction of $\bbS$.
    That is,
    pointing in the direction of increasing $s$.
    By calculating the derivative of
    \[
        f^2(x,s) =
        \bigl(
        A^2(x) + \rho(s) \cdot A(z) + (\rho \circ h)(s) \cdot z,
        \,
        h^2(s)
        \bigr)
    \]
    one can show that
    \[
        Df^2(v,w) = 
        \bigl(
        A^2(v) + \rho'(s) \cdot A(z) + (\rho \circ h)'(s) \cdot z,
        \,
        Dh^2(w)
        \bigr).
    \]
    Define
    \[
        \al := \min
            \begin{bigset}
            \rho'(s) + (\rho \circ h)'(s)
            : s  \in  [h(d), d]  \end{bigset}
    \]
    and
    \[
        \beta := \max
            \begin{bigset}
            (h^2)'(s)
            : s  \in  [h(d), d]  \end{bigset}
      \]
    and note that $\al > 0$.
    For some $C > 1$, if $z$ is given by \cref{lemma:zAz},
    then
    \begin{align*}
        \| A^2(v) \,+\, \rho'(s) \cdot A(z) \,+\, (\rho \circ h)'(s) \cdot z \|
        & \ge  \\
        \dist
        \bigl(
        \rho'(s) \cdot A(z) \,+\, (\rho \circ h)'(s) \cdot z, \, \Eu_A(0)
        \bigr)
        &> C \al
    \end{align*}
    and therefore $\slope Df^2(u) < \beta/C \al$.
    Take $C$ large enough that $\beta/C \al < \delta$.
\end{proof}
For the remainder of the proof,
assume $z$ was chosen so that \cref{lemma:chooz}
holds with $\delta > 0$ given by \cref{lemma:slopects}.
The last three lemmas then combine to show the following.

\begin{cor} \label{cor:dieslopedie}
    If $p = (x,s)  \in  \bbT^2 \ti [h(d),d]$
    and $u \in \Eu_A(x) \ti 0 \subof T_p \bbT^3$,
    then
    \[    
        \lim_{n \to +\infty} \slope Df^n(u) = 0.
    \]  \end{cor}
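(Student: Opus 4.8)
The plan is to chase the forward orbit of $p$ together with the tangent vector $u$ through the three slabs governed by the preceding lemmas, showing that the slope is first driven below the threshold $\delta$, then below $1$ once the orbit enters $\bbT^2 \ti [0,\tfrac{\ep}{2}]$, and finally contracted geometrically to $0$ by the strong vertical contraction there. The estimates themselves are handed to us by the three lemmas, so the real content is to verify that the vertical coordinate of the orbit lands in the correct window at each stage.

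First I would track the vertical coordinate. Since $h$ is an increasing diffeomorphism of $[0,\ep]$ with $h(s) < s$ on $(0,e)$, the point $p = (x,s)$ with $s \in [h(d),d]$ satisfies $h^2(s) \in [h^3(d), h^2(d)]$, so $f^2(p) \in \bbT^2 \ti [h^3(d), h^2(d)]$. Because $\Eu_A(x) \ti 0 \subof \Eu_A(x) \ti T_s \bbS$, \cref{lemma:chooz} applies to $u$ and yields $\slope Df^2(u) < \delta$, with $\delta$ the constant furnished by \cref{lemma:slopects}. Applying \cref{lemma:slopects} to the vector $Df^2(u)$ based at $f^2(p)$ then produces an integer $k \ge 1$ with $f^{k+2}(p) \in \bbT^2 \ti [0,\tfrac{\ep}{2}]$ and $\slope Df^{k+2}(u) < 1$. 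Note that after this first step only the slope bound is used; the fact that $u$ began in $\Eu_A(x) \ti 0$ plays no further role.

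Once the orbit reaches $\bbT^2 \ti [0,\tfrac{\ep}{2}]$ it never leaves, since $h(s) = \lam s \le \tfrac{\ep}{2}$ for $s \in [0,\tfrac{\ep}{2}]$, so this slab is forward invariant. Hence \cref{lemma:halfslope} applies at every subsequent iterate, and for $n \ge k+2$,
\[
    \slope Df^n(u) < \left(\tfrac{1}{2}\right)^{n-(k+2)} \slope Df^{k+2}(u) < \left(\tfrac{1}{2}\right)^{n-(k+2)},
\]
which tends to $0$ as $n \to +\infty$. The only genuinely delicate point is this domain bookkeeping: one must confirm that two steps of $h$ carry $[h(d),d]$ into the precise window $[h^3(d), h^2(d)]$ required by \cref{lemma:slopects}, and that the slab $[0,\tfrac{\ep}{2}]$ on which \cref{lemma:halfslope} gives uniform contraction is forward invariant so the geometric decay can be iterated indefinitely. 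Both facts follow from the monotonicity of $h$ and its defining relation $h(s) = \lam s$ near $0$; with these in hand the three slope estimates chain together with no further computation.
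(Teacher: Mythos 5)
Your proof is correct and is essentially the paper's own argument: the paper gives no proof beyond the remark that the last three lemmas combine, and your chain \cref{lemma:chooz} $\to$ \cref{lemma:slopects} $\to$ \cref{lemma:halfslope}, together with the bookkeeping that $h$ is increasing so $h^2([h(d),d]) = [h^3(d),h^2(d)]$ and that $\bbT^2 \ti [0,\tfrac{\ep}{2}]$ is forward invariant under $h(s)=\lam s$, is exactly that combination. Incidentally, your argument in fact establishes the stronger statement for $u \in \Eu_A(x) \ti T_s \bbS$, which is the form actually invoked later in the proof of \cref{lemma:cusplit}; for $u \in \Eu_A(x) \ti 0$ as literally stated the conclusion is trivial, since $\slope(u) = 0$ and $f$ preserves horizontal tori, so the slope remains $0$ under all iterates.
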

\begin{lemma} \label{lemma:cusplit}
    $f$ has a dominated splitting of the form
    $\Ecu \oplus_> \Es$
    with $\dim \Ecu = 2$.
\end{lemma}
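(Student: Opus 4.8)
The plan is to obtain this splitting, just as in \cref{lemma:usplit}, by applying \cref{thm:genchaindom} with $Y = \bbT^2 \ti [0,e]$ and $Z = \bbT^2 \ti \{0,e\}$, except that now I take the two-dimensional bundle $\Ecu$ as the dominating bundle and the one-dimensional strong stable bundle $\Es$ as the dominated one. On $Z$ the required dominated splitting is explicit. At a point $(x,e)$ the orbit stays in $\bbT^2 \ti e$, where $f$ acts as $x \mapsto A(x) + z$ with neutral vertical derivative $h'(e) = 1$, so one sets $\Ecu(x,e) = \Eu_A(x) \ti T_e\bbS$ and $\Es(x,e) = \Es_A(x) \ti 0$; domination holds because the vertical rate $1$ still dominates the stable rate of $A$. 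At a point $(x,0)$ the map is $(g_0(x),0)$ with vertical derivative $\lam$, and since $\|Dg_0 v\| > 2\lam$ for all unit $v$, the whole horizontal plane dominates the vertical line, giving $\Ecu(x,0) = T_x\bbT^2 \ti 0$ and $\Es(x,0) = 0 \ti T_0\bbS$. Condition (1) of \cref{thm:genchaindom} holds because $h(s) < s$ on $(0,e)$, so $\ell(x,s) = s$ strictly decreases along orbits outside $Z$, forcing the chain recurrent set of $f|_Y$ into $Z$.

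It remains to verify condition (3). Given an orbit through $\bbT^2 \ti (0,e)$, the heights $s_n$ decrease monotonically to $0$, so after shifting along the orbit one may assume the base point is $p = (x,s)$ with $s \in [h(d),d]$, and I will take $V_p = \Eu_A(x) \ti T_s\bbS$, which is two-dimensional as required.

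For the forward iterates, the slope estimates assembled earlier apply directly to every nonzero $u \in V_p$: \cref{lemma:chooz} gives $\slope Df^2(u) < \delta$, \cref{lemma:slopects} then pushes the orbit into $\bbT^2 \ti [0,\tfrac{\ep}{2}]$ with slope below $1$, and \cref{lemma:halfslope} halves the slope at each subsequent step (the region $\bbT^2 \ti [0,\tfrac{\ep}{2}]$ being forward invariant since $h(s) = \lam s$ there). Hence $\slope Df^n(u) \to 0$, exactly as recorded in \cref{cor:dieslopedie}. Since $s_n \to 0$, any accumulation point of $u^n$ is a horizontal vector based at some $(x_\infty,0) \in Z$, and such a vector avoids the vertical line $\Es(x_\infty,0) = 0 \ti T_0\bbS$.

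The backward direction is where one must be slightly careful, but it turns out to be clean. Because $s \ge h(d)$ and $h$ is an increasing diffeomorphism, a single backward step already lands at height $s_{-1} = h\inv(s) \ge d$, beyond the support of the shear; and since the backward heights increase to $e$, every further backward iterate stays in $\bbT^2 \ti [d,\ep]$. There $\rho' \equiv 0$, so $Df(v,w) = (Av, h'(s)w)$ is diagonal and preserves $\Eu_A \ti T\bbS$. Consequently $u^{-m} \in \Eu_A(x_{-m}) \ti T\bbS$ for all $m \ge 1$, and since $s_{-m} \to e$, any accumulation point of $u^{-m}$ lies in $\Eu_A(x_\infty) \ti T_e\bbS$ at a point $(x_\infty,e) \in Z$; as $\Eu_A \cap \Es_A = 0$, this vector is again not contained in $\Es = \Es_A \ti 0$. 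With condition (3) verified, \cref{thm:genchaindom} produces the dominated splitting $\Ecu \oplus_> \Es$ on $\bbT^2 \ti [0,e]$; the involution $(x,t) \mapsto (x,-t)$ commutes with $f$ and extends it to $\bbT^2 \ti [-e,0]$, while outside $\bbT^2 \ti [-e,e]$ the linearity of $f$ provides the splitting directly, so uniqueness of the splitting glues these into a global one. The main obstacle is the forward control of the slope, but this is precisely what the earlier choice of $z$ and the slope lemmas were arranged to supply; the backward estimate is easy once one notices the one-step jump past the shearing region.
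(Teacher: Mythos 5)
Your proof is correct and follows essentially the same route as the paper: the same application of \cref{thm:genchaindom} with $Y = \bbT^2 \ti [0,e]$ and $Z = \bbT^2 \ti \{0,e\}$, the same choice $V_p = \Eu_A(x) \ti T_s \bbS$ after shifting to the fundamental domain $\bbT^2 \ti [h(d),d]$, forward control of slopes via \cref{cor:dieslopedie}, and backward control via the invariance of $\Eu_A \ti T\bbS$ above the shear region. The only difference is that you spell out details the paper leaves implicit, namely the domination on $Z$ and the one-step jump past the support of $\rho'$ that justifies $Df^{n}(V_p) = \Eu_A(x_n) \ti T_{s_n}\bbS$ for $n < 0$.
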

\begin{proof}
    This proof follows the same general outline
    as the proof of \cref{lemma:usplit}.
    Let $Y$ and $Z$ be as in that proof.
    If $p = (x,e)  \in  \bbT^2 \ti e$,
    then $\Ecu_f(p) = \Eu_A(x) \ti T_e \bbS$.
    If $p = (x,0)  \in  \bbT^2 \ti 0$,
    then
    $\Eu_f(p) = T_x \bbT^2 \ti 0$.

    Now,
    consider an orbit $\{f^n(p)\}_{n  \in  \bbZ}$ where $p  \in  \bbT^2 \times (0,e)$.
    Up to shifting along the orbit,
    one may assume
    $p = (x,s)$
    with
    $s  \in  [h(d),d]$.
    Define $V_p \subof T_p \bbT^3$ by $V_p = \Eu_A(x) \ti T_s \bbS$
    and let $u$ be a non-zero vector in $V_p$.
    Write $p_n = (x_n$, $s_n) = f^n(p)$ for all $n  \in  \bbZ$.
    First, consider the backwards orbit of $u$.
    Note that
    \[
        Df^{n}(V_p) = \Eu_A(x_{n}) \ti T_{s_{n}} \bbS
    \]
    for all $n < 0$.
    Hence,
    if $\{u^{-m_j}\}$ is a convergent subsequence,
    then
    $p_{-m_j}$ converges to a point $p_-  \in  \bbT^2 \ti e$,
    and $u^{-m_j}$ converges to a vector in $\Ecu_f(p_-)$.
    In the other direction,
    \cref{cor:dieslopedie} implies that
    $\slope(u^n)$ tends to 0 as $n \to \infty$.
    If $\{u^{n_j}\}$ is a convergent subsequence,
    then
    $p_{n_j}$ converges to a point $p_+  \in  \bbT^2 \ti 0$,
    and $u^n_j$ converges to a vector in $\Ecu_f(p_+)$.
    One may then use
    \cref{thm:genchaindom}
    to show that the dominated splitting
    extends to all of $\bbT^3$.
\end{proof}
Now that the global invariant dominated splittings
$\Eu \oplus \Ecs$ and $\Ecu \oplus \Es$
are known to exist,
\cref{cor:nwineq} implies that $f$ is strongly partially hyperbolic
on all of $\bbT^3$.

\section{Further constructions} \label{sec:further} 

Rodriguez Hertz, Rodriguez Hertz and Ures gave two different constructions of
a system on the 3-torus with an invariant center-unstable 2-torus
\cite{rhrhu2016coherent}.
In the first of these constructions, the system is not dynamically coherent
as there is no invariant foliation tangent to $\Ec$.
In the second of their constructions,
the center bundle $\Ec$ is integrable, but not uniquely integrable.
The construction we gave in \cref{sec:proofdacs} corresponds to the first of
these cases.

\begin{prop} \label{prop:nondyn}
    The construction of $f$ given in \cref{sec:proofdacs} is not dynamically coherent.
\end{prop}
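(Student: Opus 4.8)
The plan is to show that $f$ cannot possess a continuous invariant foliation tangent to the center bundle $\Ec$, by exploiting the fact that the $\Ecu$ plane field fails to be involutive along the attracting torus $\bbT^2 \ti 0$. The key observation is that dynamical coherence would force the existence of a center foliation $\Wc$ whose leaves, being tangent to $\Ec \subof \Ecs \cap \Ecu$, must behave compatibly with both the $\Ecs$ and $\Ecu$ bundles established in \cref{lemma:usplit} and \cref{lemma:cusplit}. First I would identify the center bundle explicitly on $\bbT^2 \ti 0$: since $\Eu_f(x,0) = T_x \bbT^2 \ti 0$ is the full horizontal plane (from the proof of \cref{lemma:cusplit}) and $\Es$ is the vertical direction, the center direction $\Ec = \Ecs \cap \Ecu$ must lie in the horizontal plane, and in fact equals $\Ec_{g_0}(x) \ti 0$. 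Thus the restriction of any center foliation to $\bbT^2 \ti 0$ would coincide with the weak-stable foliation of $g_0$.

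The heart of the argument is to track how the $\Ecu$ planes twist as one approaches the torus. As depicted in \cref{fig:planes}, for a point $(x,s)$ with $s > d$ the $\Ecu$ plane is vertical (it is $\Eu_A(x) \ti T_s\bbS$), but as the orbit descends through the shearing region $[c,d]$ and then contracts, \cref{cor:dieslopedie} shows the slope of the image of the vertical-containing plane collapses to zero. The crucial point is that this collapse is achieved through genuine shearing by the translation $\rho(s)\cdot z$, which was chosen via \cref{lemma:zAz} precisely so that the vertical direction gets pushed to a horizontal direction transverse to $\Eu_A$. The next step I would carry out is to argue that if a center foliation existed, its leaves would have to be invariant curves tangent to $\Ec$; following such a leaf from the region $s > d$ down toward $\bbT^2 \ti 0$, the shearing forces the accumulated ``horizontal displacement'' of the center leaf to be governed by the integral of $\rho'(s)\cdot A(z) + (\rho\circ h)'(s)\cdot z$, which by the choice of $z$ points in a direction not aligned with $\Eu_A(0)$. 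This produces a contradiction with the requirement that on $\bbT^2 \ti 0$ the center leaves lie along $\Ec_{g_0}$.

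Concretely, I would formalize the obstruction as follows. Suppose $\Wc$ is an invariant center foliation. Consider the invariant torus $\bbT^2 \ti 0$ together with the one-parameter behaviour of nearby center leaves in the transverse direction. Because $f$ contracts $\bbT^2 \ti (-e,e)$ toward $\bbT^2 \ti 0$ and the domination $\Ecu \oplus_> \Es$ holds, center leaves through points $(x,s)$ with small $s > 0$ would have to converge, under forward iteration, to center leaves inside $\bbT^2 \ti 0$. But the unstable-type shearing means the holonomy of $\Ecu$ along the $\Es$-direction is nontrivial: the plane $\Ecu(x,s)$ does not vary in a way compatible with a foliation, since integrating $\Ec$ upward from $\bbT^2 \ti 0$ would require the center curves to remain within horizontal slopes, yet the plane field $\Ecu$ acquires a definite vertical component for $s > d$ that cannot be reconciled with a tangent foliation. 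The hard part will be making the ``integrating $\Ec$ leads to a contradiction'' step rigorous: one must rule out all possible continuous invariant tangent foliations, not merely smooth ones, so the cleanest route is to show that the non-involutivity of $\Ecu$ is detected at the level of accumulation of iterated tangent planes. I expect the main obstacle to be ruling out pathological (merely continuous, non-uniquely-integrable) center foliations; I would address this by reducing to the behaviour on the single compact invariant torus $\bbT^2 \ti 0$, where $\Ec$ is well-defined and the transverse shearing recorded in \cref{lemma:chooz} directly contradicts the existence of an invariant foliation extending $\Ec_{g_0}$ into the $s$-direction.
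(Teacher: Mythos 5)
There is a genuine gap here, and it is structural rather than a matter of missing details. Your argument is purely local: it tries to derive non-coherence from the shear itself (the collapse of slopes recorded in \cref{lemma:chooz} and \cref{cor:dieslopedie}), interpreted as a non-involutivity or holonomy obstruction for $\Ecu$. But the very same shear, with only its sign changed on one or both sides of the torus, appears in the modified constructions of \cref{sec:further} (cases (c) and (d) of \cref{fig:clines}), and those systems are expected to admit invariant center foliations whose leaves cross the torus. So no argument that uses only the magnitude of the twisting of the $\Ecu$ planes --- without tracking the transverse orientation of $\Ec$ on the two sides of $\bbT^2 \ti 0$ --- can distinguish the non-coherent cases from the coherent ones. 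Moreover, involutivity is not available as a tool: the bundles here are merely continuous, Frobenius requires at least $C^1$ plane fields, and a continuous plane field can be tangent to a foliation while exhibiting exactly the plane-twisting you describe. Your own text flags the key step (``integrating $\Ec$ leads to a contradiction'') as unresolved, and in the purely local form you propose it cannot be resolved.

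The paper's proof has two ingredients that your proposal lacks. First, an orientation argument: since $f$ preserves the horizontal foliation $\{\bbT^2 \ti s\}$, a tangent vector with nonzero vertical component keeps a nonzero vertical component under $Df$; one checks that $\Ec_f$ is non-horizontal everywhere on $\bbT^2 \ti (0,e]$, and that $\Ec_f$ can be oriented so that on both sides it points toward $\bbT^2 \ti 0$. Consequently any positively oriented curve tangent to $\Ec$ starting in the torus must remain in the torus --- this is precisely where the sign of the shear (case (a)/(b) versus (c)/(d)) enters. Second, a global ingredient that converts the coherence hypothesis into a concrete object: by the leaf conjugacy theorem of \cite{hp2014pointwise}, a dynamically coherent partially hyperbolic diffeomorphism of $\bbT^3$ homotopic to $A \ti \id$ would have a compact center circle through every point; combined with the orientation argument, this yields an invariant foliation of $\bbT^2 \ti 0$ by circles tangent to $\Ec_{g_0}$, which is impossible because $g_0$ is homotopic to a hyperbolic toral automorphism. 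Nothing in your proposal plays the role of this second, global step, and without it (or some substitute) the local analysis of the shear cannot close the proof.
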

\begin{proof}
    The diffeomorphism $f$ leaves the foliation of
    horizontal planes invariant.
    Therefore, if a vector $u$ in the tangent bundle $T \bbT^3$
    has a non-zero vertical component, then $Df(u)$ also has
    a non-zero vertical component.
    If $p  \in  \bbT^2 \ti [h(d),e]$, and $u$ is a unit vector in $\Ec_f(p)$,
    then $u$ has a non-zero vertical component.
    By iterating forward, one sees that the same property
    holds for any $p  \in  \bbT^2 \ti (0,e]$.
    Hence, one may choose an orientation for the line bundle $\Ec_f$
    on $\bbT^2 \ti (0,e]$ so that the center direction always points in the direction
    of decreasing $s$.  That is, the orientation always points towards
    $\bbT^2 \ti 0$.
    
    This choice extends continuously to $\bbT^2 \ti [0,e]$.
    Further, by the symmetry of the contruction,
    the center orientation may be extended to $\bbT^2 \ti [-e,e]$,
    and on both sides, the center orientation points towards $\bbT^2 \ti 0$.
    This means that any parameterized curve
    $\gam : [0,+\infty) \to \bbT^3$
    that starts in $\bbT^2 \ti 0$, stays tangent to $\Ec$,
    and agrees with the orientation of $\Ec$,
    must remain for all time inside of $\bbT^2 \ti 0$.

    The constructed $f$ is homotopic to $A$ times the identity map on $\bbS$.
    If $f$ were dynamically coherent,
    then by the leaf conjugacy given in \cite[Theorem 1.3]{hp2014pointwise},
    there would be a circle tangent
    to $\Ec_f$ though every point in $\bbT^3$.
    In particular, there would be an invariant foliation of
    center circles lying in $\bbT^2 \ti 0$.
    As the dynamics $g_0$ on $\bbT^2 \ti 0$
    is homotopic to a hyperbolic toral automorphism,
    this is not possible and gives a contradiction.
\end{proof}
We now look at ways in which the construction in the previous section may be modified.
The definition of $f$ may be stated piecewise as
\[
    f(x,s) =
        \begin{cases}
        \big( g(x,s) + \rho(s) \cdot z, \; h(s) \big),
            & \text{if } s  \in  [0,\ep] \\
        \big( g(x,-s) + \rho(-s) \cdot z, \; -h(-s) \big),
            & \text{if } s  \in  [-\ep,0] \\
        \big( Ax, \; s \big),
            & \text{if } s \notin [-\ep,\ep].
          \end{cases}  \]
Recall that $z  \in  \bbZ^2$ was chosen to satisfy the conclusions of \cref{lemma:zAz}.
If $k$ is any non-zero integer, then the product $k \cdot z  \in  \bbZ^2$ also satisfies
those same conclusions.
Thus, for any choice of non-zero integers $k_1$ and $k_2$,
one may show that
the function defined by
\[
    (x,s) \mapsto
        \begin{cases}
        \big( g(x,s) + k_1 \rho(s) \cdot z, \; h(s) \big),
            & \text{if } s  \in  [0,\ep] \\
        \big( g(x,-s) + k_2 \rho(-s) \cdot z, \; -h(-s) \big),
            & \text{if } s  \in  [-\ep,0] \\
        \big( Ax, \; s \big),
            & \text{if } s \notin [-\ep,\ep]  \end{cases}
\]
is partially hyperbolic with a $cu$-torus at $\bbT^2 \ti 0$.
        
The choices of sign for $k_1$ and $k_2$
give four different ways to realize $g_0$ as the dynamics on an invariant
$cu$-torus.
These correspond to the two different ways the center bundle
can approach a horizontal direction on either side of $\bbT^2 \ti 0$
and are depicted in \cref{fig:clines}.
\begin{figure}
{
\centering
\subfigure[]{\includegraphics{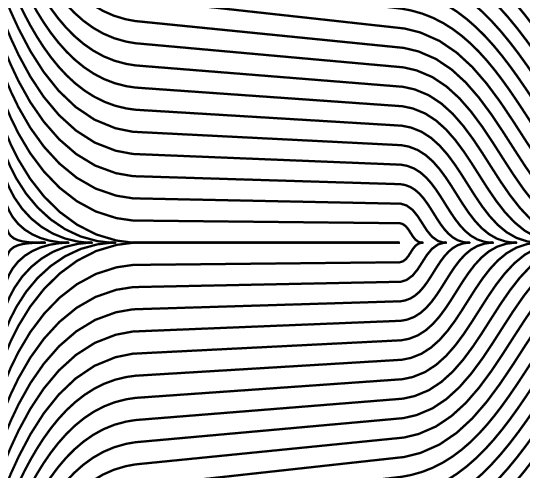}}
\qquad
\subfigure[]{\includegraphics{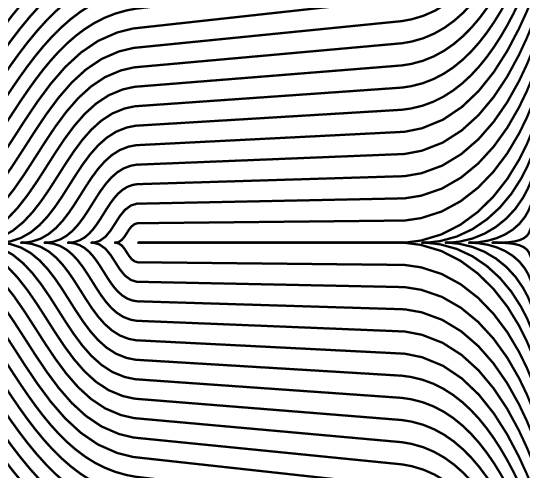}}
\\
\subfigure[]{\includegraphics{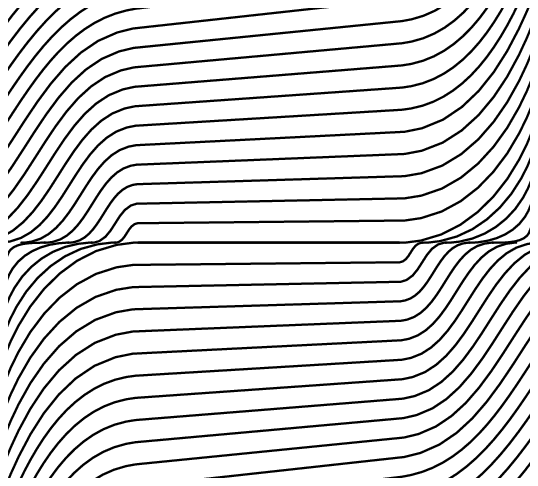}}
\qquad
\subfigure[]{\includegraphics{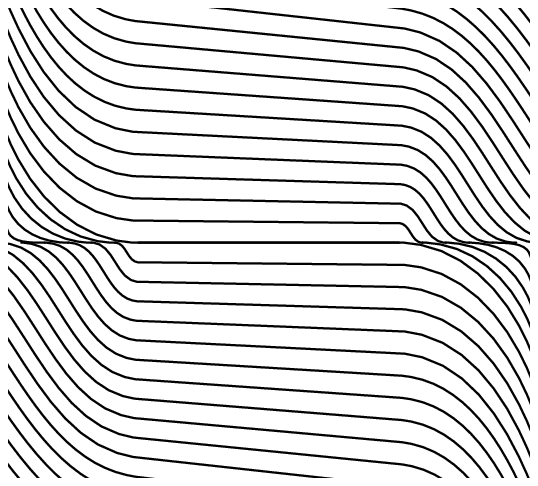}}
}
\caption[Direction]{
Four possible ways in which the center bundle may behave near a
center-unstable torus with derived-from-Anosov dynamics.
Shown here are lines tangent to the $\Ec$ direction inside a $cs$-leaf.
In each subfigure, the $cs$-leaf intersects the $cu$-torus
in a horizontal line passing through the middle of the subfigure.
In this example, the middle of this line intersects the basin of repulsion
of a repelling fixed point inside the $cu$-torus
so that there are no cusps here.
}
\label{fig:clines}
\end{figure}
The cases (a) and (b) in the figure are not dynamically
coherent, as may be shown by the argument in the proof of \cref{prop:nondyn}.
From the figure, it appears that the dynamics depicted in each of cases
(c) and (d) has an invariant center foliation
with leaves which
topologically cross the torus.
Rigorously proving the existence of this center foliation
will require a sophisticated analysis of the Franks
semiconjugacy of the system and its relation to the branching foliations of
Brin, Burago and Ivanov.  This work is left to a future paper.

\medskip{}

The above modifications to the construction 
suggest a way to prove \cref{thm:everycs} in the case where $g_0$
reverses the orientation of $\Ec$.

\begin{proof}[Proof of \cref{thm:everycs}]
    Let $g_0$ be weakly partially hyperbolic with a splitting
    of the form $\Ec \oplus \Eu$.
    The case where $g_0$ preserves the orientation of $\Ec$
    was already handled in \cref{sec:proofdacs},
    so assume here that $g_0$ reverses the center orientation.
    Then $g_0$ is homotopic to a hyperbolic toral automorphism
    $A$ which reverses the orientation of its stable bundle $\Es_A$.
    Analogously to \cref{lemma:zAz}, 
    for any $C > 1$, there is $z  \in  \bbZ^2$ such that
    \begin{math}
        \dist
        \bigl(
        \zeta \cdot A(z) - \xi \cdot z, \, \Eu_A(0)
        \bigr)
         \ge  C(\zeta+\xi)
    \end{math}
    for all $\zeta,\xi  \ge  0$. (Note now the minus sign before $\xi \cdot z$.)
    
    Our constructed diffeomorphism on $\bbT^3$ will be the result
    of modifying the linear map
    $A \ti (-\id)$ defined on $\bbT^2 \ti \bbS$.
    Fix a small $\ep > 0$ and
    define $h:[0,\ep] \to [0,\ep]$
    and $\rho:[0,\ep] \to [0,1]$
    with the properties as listed
    in \cref{sec:proofdacs}.
    Define $f$ by
    \[
        f(x,s) =
            \begin{cases}
            \big( g(x,s) + \rho(s) \cdot z, \; -h(s) \big),
                & \text{if } s  \in  [0,\ep] \\
            \big( g(x,-s) - \rho(-s) \cdot z, \; h(-s) \big),
                & \text{if } s  \in  [-\ep,0] \\
            \big( Ax, \; -s \big),
                & \text{if } s \notin [-\ep,\ep].  \end{cases}
    \]
    If $s  \in  [h(d),d]$, then
    \[
        f^2(x,s) =
        (A^2(x) + \rho(s) \cdot A(z) - (\rho \circ h)(s) \cdot z, h^2(s)).
    \]
    The above analogue of \cref{lemma:zAz} then establishes an analogue
    of \cref{lemma:chooz} in this context.
    The other parts of the proof in \cref{sec:proofdacs} are also easily
    adapted and one may show that $f$ is strongly partially hyperbolic.
\end{proof}
For simplicity, the previous section constructed a diffeomorphism on $\bbT^3$.
It is a simple matter to apply the same techniques to a 3-manifold
defined by the suspension
of either an Anosov map or ``minus the identity'' on $\bbT^2$.  
The important condition in each case is that there is a
strongly partially hyperbolic
map and an invariant subset of the manifold homeomorphic to $\bbT^2 \ti [-\ep,\ep]$
where the dynamics is given by $A \ti \id$.
As shown in \cite{RHRHU-tori}, these are the only orientable 3-manifolds
which allow a torus tangent to $\Ecu$ or $\Ecs$.

As explored in \cite[Section 4]{BonattiWilkinson} and
\cite[Appendix A]{hp2015classification},
it is possible to define partially hyperbolic diffeomorphisms on
non-orientable manifolds which are double covered by the 3-torus.
A similar construction works in the current setting
to define one-sided center-stable and center-unstable tori.

\begin{prop} \label{prop:onesided}
    For any weakly partially hyperbolic diffeomorphism
    $g_0:\bbT^2 \to \bbT^2$
    which preserves its center orientation,
    there is a non-orientable 3-manifold $M$, 
    an embedding $i:\bbT^2 \to M$
    and a strongly partially hyperbolic diffeomorphism
    $f:\bbT^3 \to \bbT^3$
    such that the one-sided torus $i(\bbT^2)$
    is tangent either to $\Ecs_f$ or $\Ecu_f$
    and $i \inv \circ f \circ i = g_0$.
\end{prop}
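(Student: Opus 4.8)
The plan is to realize the one-sided torus as the quotient of the two-sided torus $\bbT^2\ti 0\subof\bbT^3$ constructed in \cref{sec:proofdacs} by a free, orientation-reversing involution, and then to descend the diffeomorphism itself. Let $A$ be the linear part of $g_0$. First I would choose an index-two subgroup $H\subof\bbZ^2$ invariant under $A$; the associated double cover $\tilde{T}=\bbR^2/H\cong\bbT^2$ of $\bbT^2$ carries a two-torsion deck translation $T_v$, and $g_0$ lifts to a diffeomorphism $\tilde{g}_0:\tilde{T}\to\tilde{T}$ commuting with $T_v$. On $\bbT^3=\tilde{T}\ti\bbS$ set $\iota(x,s)=(T_v(x),-s)$. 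As $T_v$ is fixed-point-free, $\iota$ acts freely, so $M:=\bbT^3/\iota$ is a closed $3$-manifold double covered by $\bbT^3$; since $D\iota$ preserves the orientation of $\tilde{T}$ and reverses that of $\bbS$, the quotient $M$ is non-orientable and $\bbT^3\to M$ is its orientation cover. The image $T$ of $\tilde{T}\ti 0$ is a torus $\cong\bbT^2$ whose normal ($s$) direction is reversed along the loop realized by $T_v$, so $T$ is one-sided; let $i:\bbT^2\to M$ be the induced embedding onto $T$.

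Next I would run the construction of \cref{sec:proofdacs} for $\tilde{g}_0$ in an $\iota$-equivariant fashion, so the resulting $F:\bbT^3\to\bbT^3$ commutes with $\iota=T_v\circ\sig$, where $\sig(x,s)=(x,-s)$. That construction already forces $F\circ\sig=\sig\circ F$, so it remains to arrange $F\circ T_v=T_v\circ F$; a direct computation shows this holds as soon as every interpolating map $g(\cdot,t)$ commutes with $T_v$, the constant shear $\rho(s)\cdot z$ commuting with $T_v$ automatically. To get such an equivariant isotopy through dominated splittings, I would apply \cref{prop:isodom} on the quotient $\tilde{T}/\langle T_v\rangle=\bbT^2$ to connect $g_0$ to its linear model $A$, and then lift the family to $\tilde{T}$: a lifted family commutes with $T_v$ by construction, and a dominated splitting, defined by a pointwise inequality, lifts along the covering. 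Lifting the cone field of \cref{prop:slowcone} gives a $T_v$-invariant cone field, and a $z$ supplied by \cref{lemma:zAz} for $\tilde{A}$ (whose stable orientation is preserved, since $A$'s is) completes the data. With these equivariant choices, every step of \cref{sec:proofdacs} goes through verbatim and yields a strongly partially hyperbolic $F$ commuting with $\iota$, for which $\tilde{T}\ti 0$ is a two-sided $cs$- or $cu$-torus carrying $\tilde{g}_0$.

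Finally I would descend. Because a dominated splitting of given dimensions is unique, the splittings $\Eu\oplus\Ecs$ and $\Ecu\oplus\Es$ are $D\iota$-invariant and hence descend to $M$; as strong partial hyperbolicity is a pointwise derivative condition and $\bbT^3\to M$ is a local diffeomorphism, the descended map $f:M\to M$ is strongly partially hyperbolic. The torus $T=i(\bbT^2)$ is then tangent to $\Ecs_f$ or $\Ecu_f$ according to the splitting of $g_0$, and $i\inv\circ f\circ i=g_0$ by construction. The main work---and the only point at which a restriction on $g_0$ can enter---is the very first step: manufacturing the $\iota$-equivariant isotopy through dominated splittings, which I handle by pushing \cref{prop:isodom} down to the quotient torus and lifting, together with securing the $A$-invariant index-two sublattice $H$ that lets $g_0$ lift to its orientation double cover.
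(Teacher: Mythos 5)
Your proposal follows essentially the same route as the paper's proof, just packaged differently. The paper constructs $f$ for $g_0$ on $\bbT^3=\bbR^3/\bbZ^3$ exactly as in \cref{sec:proofdacs}, lifts it to $\tf:\bbR^3\to\bbR^3$ preserving $\bbR^2\ti 0$, and takes the quotient of $\bbR^3$ by the group $\Gamma$ generated by the translations $(v,s)\mapsto(v,s+1)$ and $(v,s)\mapsto(v+(0,1),s)$ together with the glide $(v,s)\mapsto(v+(1,0),-s)$. The translation subgroup of $\Gamma$ is $(2\bbZ\ti\bbZ)\ti\bbZ$, so this quotient is precisely your $(\tilde{T}\ti\bbS)/\iota$ with $H=2\bbZ\ti\bbZ$ and deck translation $v=(1,0)$: the paper does the construction downstairs and descends the lift, while you lift $g_0$ first and re-run \cref{sec:proofdacs} equivariantly upstairs (equivariant \cref{prop:isodom}, lifted cone fields, $T_v$-invariant shear). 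These are the same verification in two orders, and your descent argument via uniqueness of the dominated splitting is correct.

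The one genuine gap is the step you flag at the end but do not justify: the existence of an index-two subgroup $H\subof\bbZ^2$ invariant under $A$. Such an $H$ is the kernel of a nonzero functional $\phi:\bbZ^2\to\bbZ/2$ with $\phi\circ A=\phi$, i.e.\ a nonzero fixed vector for the action of $A$ mod $2$ on the dual of $(\bbZ/2)^2$. Since $A$ mod $2$ lies in $GL(2,\bbZ/2)\cong S_3$, such a fixed functional exists if and only if $A$ mod $2$ has order at most two, equivalently if and only if $\operatorname{tr} A$ is even; it fails for the cat map $\left(\begin{smallmatrix}2&1\\1&1\end{smallmatrix}\right)$, whose reduction mod $2$ has order three. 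This is not an artifact of the method: if $i(\bbT^2)$ is one-sided and $f$-invariant with $i\inv\circ f\circ i=g_0$, then the first Stiefel--Whitney class of the normal bundle of $i(\bbT^2)$ is a \emph{nonzero} class in $H^1(\bbT^2;\bbZ/2)$ fixed by $g_0^*=A^*$, so some such parity hypothesis is unavoidable for the statement itself. In fairness, the paper's own proof silently makes the identical assumption --- for $\tf$ to descend to $\bbR^3/\Gamma$ one needs $A(2\bbZ\ti\bbZ)\subof 2\bbZ\ti\bbZ$ and $(A-I)(1,0)\in 2\bbZ\ti\bbZ$ --- so your write-up reproduces the paper's argument, unaddressed subtlety included; you at least isolate exactly where the restriction enters.
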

\begin{proof}
    Assume $g_0$ has a splitting of the form $\Eu \oplus \Ec$
    and construct $f:\bbT^3 \to \bbT^3$ as in \cref{sec:proofdacs}.
    Assume $\bbT^3$ is defined as $\bbR^3/\bbZ^3$
    and lift $f$ to a map $\tf:\bbR^3 \to \bbR^3$
    such that $\tf(\bbR^2 \ti 0) = \bbR^2 \ti 0$.
    Construct a new closed 3-manifold by quotienting $\bbR^3 = \bbR^2 \ti \bbR$
    by the group generated by the translations
    $(v,s) \mapsto (v, s+1)$
    and
    $(v,s) \mapsto (v+(0,1), s)$
    and the isometry
    $(v,s) \mapsto (v+(1,0), -s)$.
\end{proof}

This concludes our construction of examples in dimension 3.
The rest of paper handles constructions in higher dimension.

\section{Compact center-stable manifolds of higher dimension} \label{sec:higher} 

This section proves \cref{thm:allsinkssimple}.
In fact, we will prove the following restatement of the theorem which gives
more technical details about the nature of the constructed diffeomorphism $F$.

\begin{prop} \label{prop:allsinks}
    Let $f_0 : M \to M$ be a diffeomorphism,
    let $X \subof M$ be a finite invariant set such that
    every $x  \in  X$ is either a periodic source or sink,
    and let $U$ be a neighborhood of $X$.
    Then,
    there are a diffeomorphism $f : M \to M$,
    a toral automorphism $A : \bbTD \to \bbTD$,
    a smooth map $h : M \to \bbTD$,
    and a diffeomorphism $F : M \ti \bbTD \to M \ti \bbTD$
    defined by
    \[
        F(x, v) = (f(x), Av + h(x))
    \]
    such that{:}
    \begin{enumerate}
        \item $F$ is strongly partially hyperbolic;

        \item A is a linear Anosov diffeomorphism with
        $\dim \Es_A = \dim \Eu_A = \dim M$;

        \item $f(x) = f_0(x)$ and $h(x) = 0$ for all $x  \in  M \sans U$;

        \item if $x  \in  NW(f) \sans X$ and $v  \in  \bbTD$, then
        \[            \Es_F(x,v) = 0 \oplus \Es_A(v),
            \quad
            \Ec_F(x,v) = T_x M \oplus 0,
            \qandq
            \Eu_F(x,v) = 0 \oplus \Eu_A(v);
        \]
        \item if $x  \in  X$ is a sink and $v  \in  \bbTD$, then
        \[            \Es_F(x,v) = T_x M \oplus 0,
            \quad
            \Ec_F(x,v) = 0 \oplus \Es_A(v),
            \qandq
            \Eu_F(x,v) = 0 \oplus \Eu_A(v);
        \]
        \item if $x  \in  X$ is a source and $v  \in  \bbTD$, then
        \[            \Es_F(x,v) = 0 \oplus \Es_A(v),
            \quad
            \Ec_F(x,v) = 0 \oplus \Eu_A(v),
            \qandq
            \Eu_F(x,v) = T_x M \oplus 0.
        \]  \end{enumerate}  \end{prop}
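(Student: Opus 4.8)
The plan is to realize $F$ as a skew product over a diffeomorphism $f$ isotopic to $f_0$, with a fixed linear Anosov fibre map $A$ whose rates are so extreme that $A$ dominates $f_0$ in both directions away from $X$, while near the points of $X$ the modified base map out-contracts or out-expands $A$. Concretely, I would first fix a linear Anosov automorphism $A:\bbTD\to\bbTD$ with $\dim\Es_A=\dim\Eu_A=\dim M=:d$ (so $D=2d$) and, after replacing it by a high power $A^N$ if necessary, arrange that its unstable rate $\mu$ exceeds $\sup_M\|Df_0\|$ and its stable rate $\nu$ is smaller than $\inf_M m(Df_0)$. With such $A$, wherever $f=f_0$ and $h=0$ the fibre bundles $0\oplus\Eu_A$ and $0\oplus\Es_A$ (which are always $DF$-invariant) dominate the base $T_xM\oplus0$ in both directions, so the base is forced to be the centre. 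I would then modify $f_0$ only inside $U$: near each sink (at each point of its orbit) I isotope the map to a strong contraction with $\|Df\|<\nu$, and near each source to a strong expansion with $m(Df)>\mu$, keeping $f=f_0$ and $h=0$ on $M\sans U$ and keeping $f$ isotopic to $f_0$. Since sinks and sources are isolated in the non-wandering set, shrinking $U$ guarantees $NW(f)\cap U=X$, so every point of $NW(f)\sans X$ has its entire orbit in $M\sans U$.

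The crucial simplification is $NW$-locality. Because $F$ is a skew product, $NW(F)\subof NW(f)\ti\bbTD\subof\bigl(X\cup(M\sans U)\bigr)\ti\bbTD$, so on $NW(F)$ the map is either the explicit product $f_0\ti A$ (over $M\sans U$) or the explicit linear model over a point of $X$. On the product part the banded rates make $0\oplus\Eu_A$ dominate the centre $T_xM\oplus0$, which in turn dominates $0\oplus\Es_A$ since $m(Df_0)>\nu$; this is exactly (4). Over a sink the strong base contraction makes $T_xM\oplus0$ the strong stable with centre $0\oplus\Es_A$, giving (5); over a source the strong base expansion makes $T_xM\oplus0$ the strong unstable with centre $0\oplus\Eu_A$, giving (6). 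Thus, once a \emph{global} continuous invariant splitting with these assignments exists, \cref{cor:nwineq} upgrades the three defining inequalities from $NW(F)$ to all of $M\ti\bbTD$ and yields (1).

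The real work is producing that global invariant splitting, and this is where the shear $h$ is indispensable. The naive product splitting $\{0\oplus\Eu_A,\,T_xM\oplus0,\,0\oplus\Es_A\}$ is invariant but has no fixed domination ordering: the fastest-expanding $d$-plane is $0\oplus\Eu_A$ over generic points but $T_xM\oplus0$ over a source, so no single invariant sub-bundle of the product dominates everywhere. I would instead build the strong unstable bundle $\Eu_F$ by applying \cref{thm:chaindom} to $F$ with $Z$ a set containing the chain-recurrent points (contained in the explicit region): there $\Eu_F$ is unambiguously the fastest $d$-plane, namely $0\oplus\Eu_A$ away from sources and $T_xM\oplus0$ over sources, and it carries a dominated splitting $\Eu_F\oplus\Ecs_F$. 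The theorem then extends it to all of $M\ti\bbTD$ provided that every transition point has a $d$-plane whose forward and backward iterates accumulate on $\Eu_F$. Running the same argument for $F\inv$ produces $\Es_F$ (with $\Ecu_F\oplus_>\Es_F$), and $\Ec_F$ is the complementary invariant bundle.

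The main obstacle is precisely verifying this accumulation hypothesis at points lying in the basins of the sources and sinks inside $U$: there the product structure is degenerate, since somewhere in the transition the base direction and $0\oplus\Eu_A$ (or $0\oplus\Es_A$) share the same rate, and one must use the shear $h$ to tilt the candidate plane $V_y$ decisively onto $\Eu_F$ under iteration. I expect this to be handled by a higher-dimensional analogue of \cref{lemma:zAz} and \cref{lemma:chooz}: the integer vectors generating the fibre translation in $h$ are chosen so that the relevant distance to $\Eu_A$ grows linearly while the slope of the iterated plane is driven to zero, exactly as the vector $z$ was selected in \cref{sec:proofdacs}. With the splitting and its bundle assignments in hand, reading off (4)–(6) and invoking \cref{cor:nwineq} for (1) completes the proof.
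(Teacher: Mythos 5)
Your overall architecture matches the paper's: choose $A$ with rates extreme enough that $f_0 \ti A$ is partially hyperbolic with the base as center, modify $f$ near each point of $X$ so the base out-contracts (resp.\ out-expands) the fiber rates there, recognize that a shear $h$ is indispensable to rotate the center bundle into the fiber direction along the basin, build the global splitting with \cref{thm:genchaindom}, and finish with \cref{cor:nwineq}. The gap is in the one step you defer: the design of the shear and the verification of the accumulation hypothesis. You propose ``a higher-dimensional analogue of \cref{lemma:zAz} and \cref{lemma:chooz},'' with integer vectors generating the fibre translation, but that mechanism does not transfer to this setting. In \cref{sec:proofdacs} the shear is a loop $\bbS \to \bbT^2$ whose homotopy class $z \in \bbZ^2$ is what \cref{lemma:zAz} exploits, together with hyperbolicity of $A$ acting on that class. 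Here the shear must be a map from a $d$-dimensional coordinate disc around the sink into the fiber, supported in that (contractible) disc, so there is no homotopy class to choose; and what must be shown to converge is a sequence of $d$-dimensional planes, i.e.\ graphs of matrices $L_n$, not scalar slopes. The paper's actual mechanism is different: $h(x) = \rho(\|x\|)\,x$, so $Dh = I$ near the sink, combined with a \emph{two-stage} base contraction with rates $b$ and $a$ straddling the fiber rate $\lam$ ($a < \lam < b < 1$). In the outer stage the graph transform is the affine contraction $L \mapsto \tfrac{\lam}{b} L + \tfrac{1}{b} I$, driving $L_n$ toward $(b-\lam)\inv I$; in the inner stage it becomes the expansion $L \mapsto \tfrac{\lam}{a} L + \tfrac{1}{a} I$, blowing the diagonal entries up so that $\graph(L_n) \to 0 \ti \bbRd$. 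Making this rigorous requires controlling invertibility of the matrices $L_n$ and $\|L_n\inv\| \to 0$, which the paper does with the ``quality'' (row diagonal dominance) machinery of \cref{lemma:gershgorin,lemma:bolicity}. Without some matrix-valued control of this kind, the hypothesis of \cref{thm:genchaindom} at transition points cannot be verified, and your proof does not close.

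A secondary omission: you perform all modifications simultaneously, but when the basin of a sink overlaps the basin of a source, the bundle you must feed into the local model at a sink's transition region has already been disturbed by the modification at the source (and vice versa). The local model needs a uniform bound $\|L\| < C$ on the bundles expressed as graphs over the fiber directions in the transition annulus. The paper handles this by inserting the tori one periodic orbit at a time (\cref{prop:addcu}) and carrying the ``graph like'' property as an inductive invariant (\cref{lemma:graphopen,lemma:verthyp}), which guarantees exactly those bounds at each stage. Your one-shot application of \cref{thm:genchaindom} would need an argument replacing this bookkeeping for points whose backward orbit leaves a source and whose forward orbit enters a sink.
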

Note that the notation and, in particular,
the functions $f$, $g$, and $h$ play very different roles here than
in previous sections.

The basic idea of the construction is to replace the possibly non-linear
behaviour of $f_0$ in a neighbourhood of a point $x  \in  X$ with a simple linear
contraction or expansion.  Then, both $f$ and $A$ are linear maps and there are
exactly three rates of contraction or expansion given by $f$ and the stable and
unstable directions of $A$.  This allows us to restrict our consideration to
the case of a linear map
\[
        F(w, x, y) = (\lam \inv w, b x, \lam y)
\]
defined on $\bbRd \ti \bbRd \ti \bbRd$ and where $0 < \lam < b < 1$.
We deform this map so that a $d$-dimensional subspace which lies roughly
in the direction of $0 \ti \bbRd \ti 0$
converges to the subspace $0 \ti 0 \ti \bbRd$
under application of the derivative $DF^n$ as $n \to +\infty$.
This provides the effect of pushing the center direction into the stable
direction of $A$.

The first step is to establish the following.

\begin{lemma} \label{lemma:shearup}
    For $0 < \lam < b < 1$ and $C > 1$,
    there is a diffeomorphism $f$ of\, $\bbRd$ and a smooth map
    $h : \bbRd \to \bbRd$
    such that the diffeomorphism $F$ of\, $\bbRd \ti \bbRd$
    defined by
    \[
        F(x,y) = (f(x), \lam y + h(x) )
    \]
    has the following properties.
    If $p = (x,y)  \in  \bbRd \ti \bbRd$ with $b  \le  \|x\|  \le  1$,
    then
    \begin{enumerate}
        \item $f(x) = b x$ and $h(x) = 0$; and

        \item if $V \subof \bbRd \ti \bbRd$ is the graph of a linear map
        $L : \bbRd \to \bbRd$
        with $\|L\| < C$,
        then
        $DF^n_p(V)$ tends to $0 \ti \bbRd$ as $n$ tends to $+\infty$.
    \end{enumerate}  \end{lemma}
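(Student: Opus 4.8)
The plan is to take $f$ to be a radial contraction of $\bbRd$ that agrees with $x \mapsto bx$ on the annulus but contracts \emph{faster} than $\lam$ near the origin, and to use the shear $h$, supported near $0$, to tilt subspaces off of the horizontal $\bbRd \ti 0$. The governing idea is that once $f$ contracts more strongly than the fiber rate $\lam$, the fiber direction $0 \ti \bbRd$ becomes the \emph{dominant} (slowest) direction of $DF$ at the fixed point $(0,0)$, so that every $d$-plane transverse to the complementary strong direction is attracted to $0 \ti \bbRd$ under $DF^n$. The sole job of $h$ is to guarantee that the near-horizontal planes allowed in the hypothesis are transverse to that strong direction.

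Concretely, I would first fix $0 < b' < \lam$, choose $r_0 < r_1 < b$, and build a smooth diffeomorphism $f$ with $f(x) = bx$ for $\|x\| \ge b$, with $f(x) = b'x$ on the ball $R = \{\|x\| \le r_0\}$, and with $\|Df\| < \lam$ on all of $R$; radiality makes $f(R) \Subof R$, so $R$ is a forward-invariant trap that every forward orbit of an annulus point enters in a uniformly bounded number of iterates. I take $h$ smooth with $h = 0$ for $\|x\| \ge r_1$ and $h(x) = x$ on $R$, so $Dh(0) = I$. Then $F(x,y) = (f(x), \lam y + h(x))$ satisfies (1), and the induced action on $d$-planes is recorded by noting that $DF_{(x,y)}$ carries the graph of a linear map $L$ to the graph of
\[
    L' = \bigl(Dh(x) + \lam L\bigr)\, Df(x)\inv .
\]

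The heart of the matter is the behaviour inside $R$, where $Df = b'I$ and $Dh = I$, so the slope map reduces to the affine map $L \mapsto (\lam/b')\,L + (1/b')I$. As $\lam/b' > 1$, this expands about its unique fixed matrix $L_* = (b'-\lam)\inv I = -\tfrac{1}{\lam-b'}\,I$, and for an orbit that has spent $j$ iterates in $R$ one gets $L_{n} - L_* = (\lam/b')^{\,j}(L_{\mathrm{entry}} - L_*)$. Hence $m(L_n) \ge (\lam/b')^{\,j}\,m(L_{\mathrm{entry}} - L_*) - \|L_*\| \to \infty$, i.e. the plane converges to $0 \ti \bbRd$, \emph{provided} $L_{\mathrm{entry}} - L_*$ is invertible, where $L_{\mathrm{entry}}$ is the slope at the first entry into $R$. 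Outside $R$ the shear vanishes and $f$ contracts at rate between $b'$ and $b$, so a bounded-time, bounded-distortion estimate yields a uniform bound $\|L_{\mathrm{entry}}\| \le C_1$ over all starting planes with $\|L\| < C$ and all entry points, and $C_1$ stays bounded as $b' \to \lam$.

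The main obstacle is exactly this transversality: I must exclude that some admissible plane enters $R$ with $L_{\mathrm{entry}} - L_*$ singular. This is where the scalar form of $L_*$ pays off. Because $L_* = -\tfrac{1}{\lam-b'}I$ is a negative multiple of the identity, any $L_{\mathrm{entry}}$ with $\|L_{\mathrm{entry}}\| \le C_1$ satisfies
\[
    L_{\mathrm{entry}} - L_* \;\succeq\; \Bigl(\tfrac{1}{\lam-b'} - C_1\Bigr) I ,
\]
which is positive definite, hence invertible, as soon as $\tfrac{1}{\lam-b'} > C_1$. Since $C_1$ is bounded independently of the closeness of $b'$ to $\lam$, I would secure this inequality by choosing $b'$ sufficiently near $\lam$ (from below). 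As every forward orbit enters $R$ in uniformly bounded time and then stays, the in-$R$ estimate applies to every $p$ with $b \le \|x\| \le 1$, giving $DF^n_p(V) \to 0 \ti \bbRd$ and finishing the proof.
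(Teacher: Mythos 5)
Your proof is correct, but it resolves the crux of the lemma by a genuinely different mechanism than the paper. Both arguments share the same skeleton: a radial contraction $f$, a shear $h$ equal to the identity near the origin, the graph-transform recursion $L' = (Dh + \lam L)(Df)\inv$, and the observation that once the orbit is trapped in the inner ball where $Df = b' I$ with $b' < \lam$ and $Dh = I$, the recursion $L \mapsto (\lam/b')L + (1/b')I$ expands away from its fixed point $L_* = -(\lam - b')\inv I$ and pushes planes to the vertical, \emph{provided} $L_{\mathrm{entry}} - L_*$ is invertible. The difference lies in how this transversality is secured. The paper inserts an intermediate annulus (radii $b^N$ down to $b^2$) where the shear is already fully on but $f$ still contracts at the rate $b > \lam$; there the recursion \emph{contracts} toward the positive-definite matrix $(b-\lam)\inv I$, and the paper then tracks this positivity through the transition annulus and the final expansion via the ``quality'' (diagonal-dominance) invariant of \cref{lemma:gershgorin} and \cref{lemma:bolicity}, exploiting radial symmetry to keep the Jacobians diagonal. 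You dispense with all of that: you bound the entry slope by a constant $C_1$ uniform in $b'$ (bounded time and bounded distortion outside the inner ball) and then make the repelling fixed point enormous, $\|L_*\| = (\lam - b')\inv > C_1$, by taking $b'$ close to $\lam$, so invertibility of $L_{\mathrm{entry}} - L_*$ is automatic. Your route is more elementary — it needs neither the quality machinery nor the paper's reduction to the special case $\tfrac{b-\lam}{b-1} < \lam$ — at the cost of the bookkeeping needed to confirm that $C_1$ really is uniform as $b' \to \lam$; this does hold, since the radial interpolation of $f$ is increasing from $b'$ to $b$ so that $m(Df) \ge b' \ge \lam/2$, and the entry time is at most $\log r_0/\log b$, both independent of $b'$. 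Two small points to tidy: the Loewner inequality $L_{\mathrm{entry}} - L_* \succeq (\tfrac{1}{\lam-b'} - C_1)I$ should be phrased via the symmetric part or replaced by the Neumann-series bound, since $L_{\mathrm{entry}}$ need not be symmetric (the invertibility conclusion is unaffected); and your statement that ``outside $R$ the shear vanishes'' is not literally true on the annulus $r_0 \le \|x\| \le r_1$, though only boundedness of $Dh$ is used there.
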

As an aid in proving \cref{lemma:shearup}, we first introduce a notion of
the ``quality'' of a square matrix.
This is closely related to the idea of a row diagonally dominated matrix,
however we use different wording here in order to avoid potential confusion
between different notions of domination.

Let $A$ be a $d \ti d$ matrix with entries $a_{i j}$.
Define the \emph{quality} of the matrix as
\[
    q(A) :=
        \frac { \min \{ a_{i i} \: : \: 1  \le  i  \le  d \} }
             { \sum \{ |a_{i j}| \: : \: 1  \le  i,j  \le  d, \: i  \ne  j \} }.
\]
To have positive quality, a matrix must have positive diagonal entries.
We allow $q(A) = +\infty$ 
which occurs if and only if $A$ is diagonal and positive
definite.

\begin{lemma} \label{lemma:gershgorin}
    If $q(A) > 2$, then
    $A$ is invertible and the operator norm of the inverse satisfies
    \[
        \left\|A \inv \right\|  \le 
        \max \left\{ \frac{2 d}{a_{i i}} : 1  \le  i  \le  d \right\}.
    \]  \end{lemma}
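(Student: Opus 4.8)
The plan is to read strict diagonal dominance out of the quality hypothesis, use this to get invertibility, and then bound $\|A\inv\|$ by estimating the conorm $m(A)$ directly via the largest coordinate of a unit vector. First I would unwind the definition: writing $m := \min_i a_{ii}$ and $S := \sum_{i \ne j} |a_{ij}|$ for the total off-diagonal sum in the denominator, the hypothesis $q(A) > 2$ says precisely $m > 2S$. Since for each fixed row $i$ the partial sum $\sum_{j \ne i} |a_{ij}|$ is at most the full sum $S$, this gives $a_{ii} \ge m > 2S \ge 2 \sum_{j \ne i} |a_{ij}|$ for every $i$. In particular each diagonal entry strictly dominates its row, so by the Gershgorin circle theorem $0$ is not an eigenvalue and $A$ is invertible.

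The main step is a lower bound on the conorm, since for an invertible operator one has $\|A\inv\| = 1/m(A)$, where $m(A)$ is the conorm defined earlier. I would fix a unit vector $v = (v_1, \dots, v_d)$ and choose an index $k$ at which $|v_k|$ is maximal, so that $|v_k| \ge 1/\sqrt{d}$. Examining only the $k$-th coordinate of $Av$ and using $|v_j| \le |v_k|$ for all $j$ yields $|(Av)_k| \ge |v_k|\bigl(a_{kk} - \sum_{j \ne k} |a_{kj}|\bigr) \ge |v_k|(m - S) > |v_k| \cdot m/2 \ge m/(2\sqrt{d})$, the last inequality using $S < m/2$ and $a_{kk} \ge m$. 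Hence $\|Av\| \ge |(Av)_k| \ge m/(2\sqrt{d})$ for every unit $v$, i.e. $m(A) \ge m/(2\sqrt{d})$.

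Combining the two parts gives $\|A\inv\| = 1/m(A) \le 2\sqrt{d}/m \le 2d/m = \max_i \{2d/a_{ii}\}$, where the last equality uses that the $a_{ii}$ are positive. I do not expect a genuine obstacle here; the two points needing care are that diagonal dominance must be extracted from the \emph{global} off-diagonal sum in the definition of quality rather than a per-row sum, and the coordinate-selection bound $|v_k| \ge 1/\sqrt{d}$ that converts an entrywise estimate into a Euclidean-norm estimate. The factor $d$ in the statement is in fact slack, the argument producing the sharper constant $2\sqrt{d}$; I would keep the weaker $2d$ since it is all that the application in \cref{lemma:shearup} requires.
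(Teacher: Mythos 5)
Your proof is correct and follows essentially the same route as the paper: pick the index $k$ of the largest coordinate of $v$, use the row diagonal dominance implied by $q(A)>2$ to bound $|(Av)_k|$ from below by roughly $\tfrac{1}{2}a_{kk}|v_k|$, and convert this into a lower bound on the conorm $m(A) = 1/\|A\inv\|$. The only differences are cosmetic: you invoke the Gershgorin circle theorem explicitly for invertibility (the paper gets it implicitly from the conorm bound) and you track the sharper constant $2\sqrt{d}$ before relaxing it to the stated $2d$.
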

\begin{proof}
    This is a variation on the Gershgorin circle theorem.
    Suppose $v  \in  \bbRd$ is non-zero and
    let $i$ be an index such that $|v_i|  \ge  |v_j|$ for all $j$.
    Then,
    \begin{align*}
            \left| \sum_{j=1}^d a_{i j} v_j \right|
            \,  \ge  \,
            \left( a_{i i} - \sum_{j  \ne  i} |a_{i j}| \right) |v_i|
            \,  \ge  \,
            \tfrac{1}{2} a_{i i} |v_i|
    \end{align*}
    which implies that
    \begin{math}
        \|A v\|
             \ge 
            \tfrac{1}{2d} a_{i i} \|v\|.
    \end{math}  \end{proof}
\begin{lemma} \label{lemma:bolicity}
    If $A$ is a $d \ti d$ matrix with $q(A) > 0$
    and $B$ is a positive definite diagonal matrix with entries $b_{i j}$,
    then
    \[
        q(A B)  \ge 
            q(A) \,
            \min
            \left\{
                \frac{b_{i i}}{b_{j j}} : 1  \le  i,j  \le  d
            \right\}.
    \]  \end{lemma}
\begin{proof}
    Multiply $A$ and $B$ and check.
\end{proof}
\begin{proof}[Proof of \cref{lemma:shearup}]
    We prove \cref{lemma:shearup} in the specific case where
    \[
        \frac{b - \lam}{b - 1} < \lam.
    \]
    Showing that the general case of $\lam < b < 1$ may be proved from this
    special case is left to the reader.
    With this assumption added, there is a constant $0 < a < \lam$
    such that
    \[
        \frac{b - a}{b - 1} < a.
    \]
    Define a function
    \begin{math}
        g_0 : [0,\infty) \to [a,b]  \end{math}
    such that
    \begin{enumerate}
        \item $g_0(t) = a$ for $t  \le  b$,

        \item $g_0(t) = b$ for $t  \ge  1$,
        and

        \item \begin{math}
            0  \le  t g_0'(t) < a  \end{math}
        for all $t  \ge  0$.
    \end{enumerate}
    Define a smooth bump function $\rho:[0,\infty) \to [0,1]$
    with $\rho(t) = 0$ for $t  \ge  b$, and $\rho(t) = 1$ for $t  \le  b^2$.
    Define $h : \bbRd \to \bbRd$
    by $h(x) = \rho(\|x\|) x$.

    Before defining $f$,
    we first consider the behaviour of
    \begin{math}
        \hF(x,y) := (b x, \lam y + h(x))  \end{math}
    under iteration.
    Let $p = (x,y)$, $V$, and $L$ be as in item (2) of
    the statement of the lemma being proved.
    In particular, $b  \le  \|x\|  \le  1$.
    For $n  \ge  0$, define $\hV_n := D \hF^n_p(V)$
    and let $\hL_n : \bbRd \to \bbRd$ be the linear map
    such that $\graph(\hL_n) = \hV_n$.
    The definition of $\hF$ implies that
    \[
        \hL_{n+1} = \tfrac{\lam}{b} \hL_n + \tfrac{1}{b} Dh
    \]
    where the derivative $Dh$ is evaluated at $b^n x$.
    If $n > 2$, then $Dh$ is the identity map, $I$, and
    \[
        \hL_{n+1} = \tfrac{\lam}{b} \hL_n + \tfrac{1}{b} I.
    \]
    It follows that $\hL_n$ converges exponentially fast to $(b - \lam) \inv I$.
    When viewed as a matrix, $(b - \lam) \inv I$
    is diagonal and positive definite and so its ``quality,''
    as defined above, is $q((b - \lam) \inv I) = +\infty$.
    Therefore, there is $N > 2$ such that $q(\hL_n) > 4$ for all $n  \ge  N$.
    By compactness,
    one may find a uniform value of $N$ such that
    this lower bound on $q(\hL_n)$ holds for any starting $p = (x,y)$, $V$, and $L$
    with $\|L\| < C$.
    
    With $N$ now fixed, define $g : [0, \infty) \to [a,b]$
    by $g(t) := g_0(b^{-N} t)$
    and observe that
    \begin{enumerate}
        \item $g(t) = a$ for $t  \le  b^{N+1}$,

        \item $g(t) = b$ for $t  \ge  b^N$,
        and

        \item \begin{math}
            0  \le  t g'(t) < a  \end{math}
        for all $t  \ge  0$.
      \end{enumerate}
    Define $f$ by $f(x) = g(\|x\|) x$.
    With $f$ and $h$ now defined,
    we show that $F(x,y) = (f(x)$, $\lam y + h(x))$
    satisfies the conclusions of the lemma.

    This definition of $F$ has a form of radial symmetry{:}
    if $R$ is a rigid rotation about the origin in $\bbRd$,
    then
    $f \circ R = R \circ f$,
    $h \circ R = R \circ h$,
    and $F \circ (R \ti R) = (R \ti R) \circ F$.
    Further, any one-dimensional subspace in $\bbRd$ is invariant under $f$.
    Because of this symmetry,
    when analysing orbits of $F$,
    we need only consider points of the form
    $p = (x,y)$ where $x  \in  \bbR \ti 0$.
    That is, if $x$ is written in coordinates as
    $x = (x_1$, $x_2, \ldots, x_d)$,
    then $x_2 = x_3 = \cdots = x_d = 0$.

    The partial derivatives of $f : \bbRd \to \bbRd$
    are given by
    \[
        \frac{\del f_i}{\del x_j}
        =
        g(\|x\|) \delta_{i j}
        +
        \frac{x_i x_j}{\|x\|} g'(\|x\|).
    \]
    Since we are assuming $x  \in  \bbR \ti 0$,
    the terms $x_i x_j$ all evaluate to 0 except for the term
    $x_1 x_1$.
    Therefore
    \begin{align*}
        \frac{\del f_1}{\del x_1}
        &=
        g(\|x\|)
        +
        \|x\| g'(\|x\|)
        &&\\
        \frac{\del f_i}{\del x_i}
        &=
        g(\|x\|)
        &&\text{ if $i > 1$, and}
        \\
        \frac{\del f_i}{\del x_j}
        &=
        0
        && \text{ if } i  \ne  j.
    \end{align*}
    Further $g'(\|x\|)$ is non-zero only when
    $b^{N+1} < \|x\| < b^N$ and
    one may show that
    \[        g(\|x\|)  \le  g(\|x\|) + \|x\| g'(\|x\|)  \le  2 g(\|x\|).
    \]
    In other words, the Jacobian of $f$ is a diagonal matrix where no entry is
    more than twice as large as any other.

    Let $p = (x,y)$ with $x  \in  \bbR \ti 0$ and $b  \le  \|x\|  \le  1$.
    Let $V$ and $L$ be as in item (2) of the statement of the lemma.
    For $n  \ge  0$, define $V_n := DF^n_p(V)$
    and $L_n : \bbRd \to \bbRd$ such that $\graph(L_n) = V_n$.
    We now analyze $L_n$ as $n$ tends to $+\infty$.
    First, if $n < N$,
    then $\|f^n(x)\|  \ge  b^N$
    and the functions $F^n$ and $\hF^n$ are equal in a neighborhood of $p$.
    Therefore $L_N = \hL_N$
    and in particular $q(L_N) > 4$.

    For the case $n = N$,
    the equality $\graph(L_{N+1}) = DF(\graph(L_N))$
    may be written as
    \[    
            \begin{bigset}
            (u, L_{N+1}(u)) : u  \in  \bbRd  \end{bigset}
        =
            \begin{bigset}
            (Df(v), \lam L_N(v) + v) : v  \in  \bbRd  \end{bigset}
    \]
    showing that $L_{N+1} = (\lam L_N + I) \circ Df \inv$
    where $Df$ is evaluated at $f^N(x)$.
    \Cref{lemma:bolicity}, along with the above remark about the Jacobian of $f$,
    shows that 
    \[
        q \bigl( (\lam L_N + I) \circ Df \inv \bigr)
        \,  \ge  \,
        \tfrac{1}{2} q \bigl( \lam L_N + I \bigr).
    \]
    and this implies that
    $q(L_{N+1})  \ge  \tfrac{1}{2} q(L_N) > 2$.

    Finally, for $n > N$,
    the point $f^n(x)$ satisfies $\|f^n(x)\|  \le  b^{N+1}$.
    For points in this region, $Df = a I$ and so
    \begin{math}
        L_{N+1} = \tfrac{\lam}{a} L_n + \tfrac{1}{a} I.
    \end{math}
    which implies that
    $q(L_{n+1}) > q(L_n) > 2$
    for all large $n$.
    Since $\tfrac{\lam}{a} > 1$, the linear map $L_n$ when viewed as a matrix
    has positive entries on its diagonal and these entries tend to $+\infty$
    as $n$ tends to $+\infty$.
    \Cref{lemma:gershgorin} implies that $\|L_n \inv\|$ tends to zero as $n \to +\infty$
    and therefore the sequence of subspaces $V_n$ tends to $0 \ti \bbRd$.  \end{proof}

The next result simply adds an expanding direction to \cref{lemma:shearup}.

\begin{cor} \label{cor:fullshearup}
    For $0 < \lam < b < 1$ and $C > 1$,
    there is a diffeomorphism $f$ of\, $\bbRd$ and a smooth map
    $h : \bbRd \to \bbRd$
    such that the diffeomorphism $F$ of\, $\bbRd \ti \bbRd \ti \bbRd$
    defined by
    \[
        F(w, x, y) = (\lam \inv w, f(x), \lam y + h(x) )
    \]
    has the following properties.
    If $p = (w,x,y)  \in  \bbRd \ti \bbRd \ti \bbRd$ with $b  \le  \|x\|  \le  1$,
    then
    \begin{enumerate}
        \item $f(x) = b x$ and $h(x) = 0$;

        \item if $V \subof \bbRd \ti \bbRd \ti \bbRd$ is the graph of a linear map
        $L : \bbRd \to \bbRd \ti \bbRd$
        with $\|L\| < C$,
        then
        $DF^n_p(V)$ tends to $\bbRd \ti 0 \ti 0$ as $n$ tends to $+\infty$;
        and
    
        \item if $V \subof \bbRd \ti \bbRd \ti \bbRd$ is the graph of a linear map
        $L : \bbRd \ti \bbRd \to \bbRd$
        with $\|L\| < C$,
        then
        $DF^n_p(V)$ tends to $\bbRd \ti 0 \ti \bbRd$ as $n$ tends to $+\infty$.
      \end{enumerate}  \end{cor}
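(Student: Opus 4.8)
The plan is to reuse the diffeomorphism $f$ and the map $h$ furnished by \cref{lemma:shearup} for the same constants $\lam$, $b$, and $C$, and simply to adjoin the expanding factor $w \mapsto \lam\inv w$. Writing $G(x,y) = (f(x), \lam y + h(x))$ for the map of $\bbRd \ti \bbRd$ studied in the lemma, the map here is $F(w,x,y) = (\lam\inv w, G(x,y))$, so conclusion (1) is immediate from conclusion (1) of the lemma. The first thing I would record is that at every point the derivative splits in block form as
\[
    DF =
    \begin{pmatrix}
        \lam\inv I & 0 & 0 \\
        0 & Df & 0 \\
        0 & Dh & \lam I
    \end{pmatrix},
\]
whose lower-right $2d \ti 2d$ block is exactly the derivative of $G$. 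Thus $DF^n$ expands the $w$-factor by $\lam^{-n}$ and acts on the $(x,y)$-factor by $DG^n$. I would also extract from the construction in \cref{lemma:shearup} the auxiliary fact that $f$ is eventually the linear contraction $x \mapsto a x$ with $a < \lam < 1$, which forces a uniform bound $\|DG^n\| \le K$ for all $n$; this is the one ingredient I need to pull out of the lemma's proof rather than from its statement.

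For conclusion (2), I would take $V = \graph(L)$ with $L : \bbRd \to \bbRd \ti \bbRd$ a graph over the $w$-factor. A one-line computation shows that $DF^n_p(V)$ is again a graph, of the linear map $M_n = \lam^n\, DG^n\, L$. Since $\|DG^n\| \le K$ while $\lam^n \to 0$, we get $\|M_n\| \le \lam^n K C \to 0$, so $V_n$ flattens onto the graph of the zero map, namely $\bbRd \ti 0 \ti 0$. Geometrically this is just the statement that the $w$-direction dominates: it grows like $\lam^{-n}$ while every vector in the $(x,y)$-subspace has bounded orbit, so any subspace graphed over $w$ collapses onto the $w$-axis.

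Conclusion (3) is where the real work — such as it is — lies, and the key observation will be a decoupling. I would take $V = \graph(L)$ with $L : \bbRd \ti \bbRd \to \bbRd$ and split $L(w,x) = L_w w + L_x x$. Applying $DF$ and reparameterizing by $w' = \lam\inv w$ and $x' = Df \cdot x$, I expect to obtain the two independent recursions
\[
    L_w \mapsto \lam^2 L_w
    \qandq
    L_x \mapsto (\lam L_x + Dh)\, Df\inv ,
\]
with $Df$, $Dh$ evaluated at $x$. The first gives $L_w \to 0$ geometrically, so the images of the $w$-parameter directions converge into $\bbRd \ti 0 \ti 0$. The second recursion is literally the recursion for a graph over the $x$-factor under $G$, so \cref{lemma:shearup} applies verbatim and the corresponding $(x,y)$-subspaces converge to $0 \ti \bbRd$; that is, the images of the $x$-parameter directions converge into $0 \ti 0 \ti \bbRd$. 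Combining the two, $V_n \to \bbRd \ti 0 \ti \bbRd$.

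The main obstacle is not any single estimate but the bookkeeping in (3): I must verify that the $w$- and $x$-parameter directions genuinely evolve independently, so that I may treat them separately and invoke the lemma as a black box for the $x$-part, and I must phrase the convergence of a $2d$-dimensional subspace carefully, since $L_x$ under iteration does not converge but blows up — its graph converging to the vertical $0 \ti \bbRd$ is exactly what realizes the push of the $x$-direction into the $y$-direction. Extracting the uniform bound $\|DG^n\| \le K$ from the construction is the only place where I rely on the proof of \cref{lemma:shearup} rather than on its statement.
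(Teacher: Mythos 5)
Your proposal is correct and takes essentially the same approach as the paper: the paper's entire proof of this corollary is the single line ``Use the same $f$ and $h$ as in \cref{lemma:shearup},'' and your argument---adjoining the $\lam\inv$-expanding factor, using the block-triangular form of $DF$ and the uniform bound on $\|DG^n\|$ to collapse graphs over the $w$-factor, and decoupling the $L_w$ and $L_x$ recursions so that \cref{lemma:shearup} handles the $x$-part verbatim---is precisely the verification the paper leaves to the reader. All of your computations check out, including the key decoupling $L_w \mapsto \lam^2 L_w$, $L_x \mapsto (\lam L_x + Dh)\,Df\inv$ and the final Grassmannian convergence of the direct sum.
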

\begin{proof}
    Use the same $f$ and $h$ as in \cref{lemma:shearup}.
\end{proof}
With this established, we now consider diffeomorphisms defined on closed
manifolds.
For a closed manifold $M$ and a hyperbolic toral automorphism
$A : \bbTD \to \bbTD$,
an \emph{A-map} is a map
\begin{math}
    F : M \ti \bbTD \to M \ti \bbTD  \end{math}
of the form 
\[
    F(x,v) = (f(x), A v + h(x)).
\]
See \cite{goh2015partially} for a more general definition and further
details.
If $F$ is also a (strongly) partially hyperbolic diffeomorphism,
we call it a partially hyperbolic $A$-map.
Note that we do not a priori assume that the partially hyperbolic
splitting has any relation to the fibers of the torus bundle.

There is a small subtlety in proving \cref{prop:allsinks}
in the case where the basin of a sink overlaps the basin of a source.
To handle this, we will prove \cref{prop:allsinks}
by induction and keep track of a property we call being
``graph like'' for the splitting at a point.

For a partially hyperbolic $A$-map
and a point $x  \in  M$,
the subbundle $\Eu$ is \emph{graph like}
at $x$ if, for all $v  \in  \bbTD$,
$\Eu(x,v)$ is the graph of a linear function from $\Eu_A(v)$ to
$\Es_A(v) \oplus T_x M$.
Similarly,
$\Ecu$, $\Ecs$, and $\Es$ are graph like at $x$ if they are graphs of linear functions
\[
    T_x M \oplus \Eu_A(v) \to \Es_A(v),
    \quad
    T_x M \oplus \Es_A(v) \to \Eu_A(v),
    \qandq
    \Es_A(v) \to \Eu_A(v) \oplus T_x M
\]
respectively.
If all of $\Eu$, $\Ecs$, $\Ecu$, and $\Es$ are graph like at $x$,
we say the splitting is graph like at $x$.

Since the bundles in the splitting are continuous and $DF$-invariant 
the following is easily verified.

\begin{lemma} \label{lemma:graphopen}
    Let $F$ be a partially hyperbolic A-map with base map
    $f : M \to M$.
    For a bundle $E  \in  \{\Eu, \Ecu, \Ecs, \Es\}$,
    the set of graph-like points is open and $f$-invariant.
\end{lemma}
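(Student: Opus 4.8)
The plan is to recast each graph condition as a transversality between the invariant bundle $E$ and a continuous reference complement, and then split the statement into openness and invariance. Over a point $(x,v)$ the ambient space splits as $T_{(x,v)}(M\ti\bbTD)=T_xM\oplus\Es_A(v)\oplus\Eu_A(v)$, and $E$ is graph-like at $x$ precisely when, for every $v$, the subspace $E(x,v)$ is complementary to the reference subbundle obtained by deleting the domain factor of the defining linear map: namely $T_xM\oplus\Es_A$ for $\Eu$, $T_xM\oplus\Eu_A$ for $\Es$, $0\oplus\Eu_A$ for $\Ecs$, and $0\oplus\Es_A$ for $\Ecu$. Since complementarity of two continuous subbundles of complementary dimension amounts to the nonvanishing of a continuous determinant, the locus $O_E\subof M\ti\bbTD$ on which $E(x,v)$ meets its reference complement transversally is open. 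If $x_0$ is graph-like then $\{x_0\}\ti\bbTD\subof O_E$, and as $\bbTD$ is compact the tube lemma yields a neighbourhood $U$ of $x_0$ with $U\ti\bbTD\subof O_E$; thus every point of $U$ is graph-like and the graph-like set is open.

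For invariance I would use the block form of the derivative. Writing a tangent vector as $(m,s,u)\in T_xM\oplus\Es_A\oplus\Eu_A$, the map $F(x,v)=(f(x),Av+h(x))$ satisfies
\[
    DF_{(x,v)}(m,s,u)=\bigl(Df\,m,\;A_s s+(Dh\,m)_s,\;A_u u+(Dh\,m)_u\bigr),
\]
with $A_s=A|_{\Es_A}$ and $A_u=A|_{\Eu_A}$, and with $(Dh\,m)_s,(Dh\,m)_u$ the fibre components of $Dh\,m$; the vanishing of any $T\bbTD\to TM$ block records that the base map is fibre-independent. In particular the fibre bundles $0\oplus\Es_A$ and $0\oplus\Eu_A$ are $DF$-invariant. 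For $\Ecs$ and $\Ecu$ this settles invariance at once: the reference complement is exactly one of these invariant bundles, so $DF$ descends to a linear isomorphism of the quotient by it, and graph-likeness is just the statement that $E$ maps isomorphically onto that quotient — a property the descended isomorphism carries from $x$ to $f(x)$ and back. Hence the $\Ecs$- and $\Ecu$-graph-like sets are $f$-invariant.

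The main obstacle is the $\Eu$ and $\Es$ cases, where the reference complements $T_xM\oplus\Es_A$ and $T_xM\oplus\Eu_A$ fail to be $DF$-invariant, because the shear $Dh$ injects the horizontal factor $T_xM$ into both fibre directions. My approach would be to quotient by the invariant bundle $0\oplus\Es_A$ (for $\Eu$) or $0\oplus\Eu_A$ (for $\Es$), reducing to the two-factor situation analysed in \cref{lemma:shearup} and \cref{cor:fullshearup}, in which a graph over the horizontal factor is driven into the expanded, respectively contracted, fibre direction. There the persistence of transversality is not formal: it rests on the domination inherent in partial hyperbolicity, the expansion of $A_u$ (resp. the contraction of $A_s$) dominating the operator norm of the shear. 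Pinning down the invariant cone condition equivalent to $\Eu$- (resp. $\Es$-) graph-likeness and checking that $DF^{\pm1}$ preserves it is the crux of the lemma, and it is exactly here that the quantitative estimates underlying \cref{lemma:shearup} do the real work.
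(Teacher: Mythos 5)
Your openness argument (continuity of the bundles, transversality as nonvanishing of a continuous determinant, tube lemma over the compact fibre $\bbTD$) and your invariance argument for $\Ecs$ and $\Ecu$ (their reference complements $0\oplus\Eu_A$ and $0\oplus\Es_A$ are $DF$-invariant because the fibre derivative of an $A$-map is $A$ itself, so $DF^{\pm 1}$ carries the transversality back and forth) are correct, and together they are exactly the ``easy verification'' the paper intends: its entire proof is the remark that continuity and $DF$-invariance of the bundles make the claims routine.

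The gap is your third paragraph. For $E=\Eu$ and $E=\Es$ you correctly observe that the reference complements $T_xM\oplus\Es_A$ and $T_xM\oplus\Eu_A$ are not $DF$-invariant, but you then do not prove invariance of the graph-like set: you explicitly leave ``pinning down the invariant cone condition and checking that $DF^{\pm1}$ preserves it'' as the crux, so these two cases of the lemma are not established. Moreover, the engine you propose to close them with is unavailable at this level of generality: \cref{lemma:shearup} and \cref{cor:fullshearup} are statements about the particular diffeomorphisms constructed there (a specific $f$, a specific $h$, prescribed rates), whereas \cref{lemma:graphopen} is asserted for an arbitrary partially hyperbolic $A$-map, whose definition carries no bound at all relating $\|Dh\|$ to the expansion of $A$ on $\Eu_A$; ``the expansion of $A_u$ dominating the operator norm of the shear'' is not a hypothesis you possess. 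Note also that your quotient by $0\oplus\Es_A$ does not remove the difficulty but only relocates it: in the quotient, $\Eu$-graph-likeness becomes transversality of the (invariant) image of $\Eu$ to the image of the horizontal factor, and the horizontal is still not invariant — the same obstruction in two factors instead of three. So as written, the proposal proves the lemma for $\Ecs$ and $\Ecu$ only; for $\Eu$ and $\Es$ it is an unfinished program. You have, to your credit, put your finger on a real subtlety that the paper's one-sentence proof glosses over — the soft complement-invariance argument literally applies only to $\Ecs$ and $\Ecu$ — but identifying the obstruction is not the same as overcoming it, and a complete proof must either find an equivalent formulation of $\Eu$-/$\Es$-graph-likeness as transversality to a genuinely $DF$-invariant object, or supply the missing quantitative input from hypotheses that the lemma actually provides.
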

Next, we consider a normally attracting fiber.

\begin{lemma} \label{lemma:verthyp}
    For a partially hyperbolic A-map $F$ with base map
    $f : M \to M$,
    if $x  \in  M$ is a periodic sink for $f$ and
    $x \ti \bbTD$ is tangent to $\Ecu$,
    then $\Es$ and $\Ecs$ are graph like
    for every point in the basin of $x$.
\end{lemma}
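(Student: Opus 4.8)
The plan is to reduce the whole statement to the single fiber over the sink and then spread graph-likeness across the basin using openness and invariance. By \cref{lemma:graphopen}, for each $E \in \{\Es,\Ecs\}$ the set $G_E$ of base points at which $E$ is graph like is open and $f$-invariant. If $x$ has period $p$, then every $y$ in its basin satisfies $f^{np}(y)\to x$, so $f^{np}(y)$ eventually lies in any prescribed neighborhood of $x$; hence, once I show $x\in G_{\Es}\cap G_{\Ecs}$, openness gives a neighborhood of $x$ inside $G_E$, and $f$-invariance pulls this back along the orbit to put $y\in G_E$. So it suffices to treat the fiber over $x$.

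On that fiber the tangency hypothesis fixes the splitting. Since $x\ti\bbTD$ is tangent to $\Ecu$, we have $\Ecu_F(x,v)=0\oplus T_v\bbTD=0\oplus(\Es_A(v)\oplus\Eu_A(v))$; the expanding direction inside this plane is $\Eu_F(x,v)=0\oplus\Eu_A(v)$, and the slow direction is $\Ec_F(x,v)=0\oplus\Es_A(v)$. The center-stable case is then immediate: $\Ecs_F(x,v)=\Ec_F\oplus\Es_F$ is the $DF$-invariant complement of $0\oplus\Eu_A(v)$, and since it contains no expanding ($\Eu_A$) direction it must equal $T_x M\oplus\Es_A(v)$, i.e. the graph of the zero map over $T_x M\oplus\Es_A(v)$. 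Thus $\Ecs$ is graph like at $x$ for every $v$.

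The crux is the strong stable bundle: I must show that $\Es_F(x,v)$ projects isomorphically onto $\Es_A(v)$, rather than collapsing onto $T_x M$. This is where the shear of the construction is essential. Exactly as \cref{cor:fullshearup} (built on \cref{lemma:shearup}) produces the center-unstable direction by pushing the $M$-factor into $\Es_A$ under forward iteration, the strong stable bundle is the corresponding object for $DF\inv$ and is tilted into $\Es_A$. Concretely, I would write $\Es_F(x,v)$ as the graph of a linear map $\psi_v:T_x M\to\Es_A(v)\oplus\Eu_A(v)$ and use $DF^p$-invariance along the periodic orbit to obtain a cocycle equation for $\psi$; its $\Es_A$-component is a contraction whose inhomogeneous term is the $\Es_A$-part of $Dh$, so nondegeneracy of that term forces the $\Es_A$-component of $\psi_v$ to be invertible. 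That invertibility is precisely the assertion that $\Es_F(x,v)$ is a graph over $\Es_A(v)$.

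I expect this last tilt to be the main obstacle, since it is exactly the feature that separates the construction from a product $F(x',w)=(f(x'),Aw)$, for which $\Es_F=T_x M$ and $\Es$ fails to be graph like. Once the invertibility of the $\Es_A$-component is in hand we have $x\in G_{\Es}$ as well, and the reduction of the first paragraph finishes the proof.
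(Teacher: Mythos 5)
Your first two paragraphs track the paper's proof almost exactly: the spreading of graph-likeness from $x$ to its basin via \cref{lemma:graphopen} is the paper's final step, and the identification $\Ec_F(x,v)=0\oplus\Es_A(v)$, $\Eu_F(x,v)=0\oplus\Eu_A(v)$ by uniqueness of the dominated splitting of the affine dynamics on the invariant fiber is the paper's middle step. One small overstatement: $\Ecs_F(x,v)$ need not \emph{equal} $T_xM\oplus\Es_A(v)$ — if $Dh$ has a nonzero $\Eu_A$-component along the orbit of $x$, then $\Es_F$, hence $\Ecs_F$, carries a nonzero tilt into $\Eu_A$ — but what you actually need, and what does follow from transversality of $\Es_F$ to the fiber together with $\Ec_F=0\oplus\Es_A(v)$, is that $\Ecs_F=\Es_F\oplus\Ec_F$ is a \emph{graph} over $T_xM\oplus\Es_A(v)$; that part of your argument is fine.

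The genuine gap is the one you flagged yourself: you never prove that the $\Es_A$-component of $\psi_v$ is invertible, you only reduce it to a ``nondegeneracy'' of the $\Es_A$-part of $Dh$ along the orbit of $x$ — and no such hypothesis exists in the lemma, which is stated for an arbitrary partially hyperbolic $A$-map with $x\ti\bbTD$ tangent to $\Ecu$; nothing prevents $h$ from being constant near the orbit of $x$. So your proof does not close. It is worth saying how this compares with the paper: the paper's entire treatment of this step is the sentence ``since $\Es_F$ is transverse to $x\ti\bbTD$, it is graph like at $x$,'' and your analysis shows this implication is not valid — transversality to the fiber only says $\Es_F(x,v)=\{(\delta x,\psi_v\delta x):\delta x\in T_xM\}$ is a graph over $T_xM$, and $T_xM\oplus0$ itself is transverse to the fiber without being graph like. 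In fact the lemma fails in this generality: take $f$ north--south on $S^1$ and $A$ the cat map, with contraction $a<\lam$ at the sink and expansion $a\inv>\lam\inv$ at the source, $h$ locally constant near both fixed points, and the shear placed in the open arcs, tilting into both eigendirections of $A$; \cref{thm:genchaindom} extends the obvious splittings on the two fibers to all of $S^1\ti\bbT^2$, the sink fiber is tangent to $\Ecu$, yet $\Es_F$ over the sink is exactly $T_xM\oplus0$. (Your pure-product remark does not quite furnish this counterexample, since a \emph{global} product with such a strong sink is never partially hyperbolic — $\Es$ could not be continuous at the source fiber — but moving the shear away from the sink repairs that while keeping $\Es_F$ horizontal over the sink.) What makes the lemma true where the paper actually uses it, namely for the maps built in \cref{prop:addcu} via \cref{cor:fullshearup}, is precisely your cocycle computation: there $Dh=I$ and $Df=aI$ near the sink, so the unique bounded solution is $\psi^s_v=(a-\lam)\inv I$, which is invertible. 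So your approach is the right one for those maps, but it requires input about $Dh$ that neither your write-up nor the lemma's hypotheses supply, and which the paper's transversality one-liner does not supply either.
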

\begin{proof}
    Since $\Es_F$ is transverse to $x \ti \bbTD$,
    it is graph like at $x$.
    By the uniqueness of the dominated splitting on $x \ti \bbTD$,
    \begin{math}
        \Ec_F(x, v) = \Es_A(v)
    \end{math}
    for all $v  \in  \bbTD$.
    Therefore, $\Ecs_F$ is also graph like at $x$.
    By the previous lemma,
    being graph like at $x$ extends to being graph like
    on the basin of $x$.
\end{proof}
The next lemma allows us to replace non-linear sinks with linear ones.

\begin{lemma} \label{lemma:sinkpaste}
    Let $f_0 : M \to M$ be a diffeomorphism
    with a periodic sink $x_0 = f_0^k(x_0)$
    and let $\ep > 0$ and $0 < b < 1$.
    Then there is a diffeomorphism $f : M \to M$
    and a coordinate chart
    $\varphi : [-1, 1]^d \to M$
    such that
    \begin{enumerate}
        \item if dist(x, $x_0) > \ep$, then
        $f(x) = f_0(x)$,

        \item $f$ and $f_0$ have the same non-wandering set,

        \item $\varphi(0) = x_0$, and

        \item $\varphi \inv \circ f^k \circ \varphi(y) = b y$
        for all $y  \in  [-1,1]^d$.
    \end{enumerate}  \end{lemma}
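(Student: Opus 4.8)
The plan is to realize $f$ as a localized modification $f = f_0\circ\phi$, where $\phi$ is a diffeomorphism of $M$ supported in a small ball $B = \{\,\dist(\cdot,x_0) < r\,\}$ with $r < \ep$ and fixing $x_0$. First I would choose $r$ small enough that the orbit pieces $B, f_0(B),\dots,f_0^{k-1}(B)$ are pairwise disjoint, which is possible because the points $x_0, f_0(x_0),\dots,f_0^{k-1}(x_0)$ are distinct. Writing $g_0 := f_0^k$ for the return map, a short induction then shows that for $y\in B$ one has $f^j(y) = f_0^j(\phi(y))$ for $1\le j\le k$, since each intermediate point $f_0^{j-1}(\phi(y))$ lies in $f_0^{j-1}(B)$, which is disjoint from the support of $\phi$. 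In particular $f^k|_B = g_0\circ\phi$, so the whole problem reduces to choosing $\phi$, supported in $B$ and fixing $x_0$, so that $g_0\circ\phi$ is linear in a chart near $x_0$ while remaining a contraction toward $x_0$.

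To build $\phi$ I would work in a chart. Since $x_0$ is a hyperbolic periodic sink, there is a smooth chart $\varphi_0$ centred at $x_0$ and an adapted norm in which $G_0 := \varphi_0\inv\circ g_0\circ\varphi_0$ is $C^1$-close to its linear part $S := DG_0(0)$ on a small ball and satisfies $\|G_0(y)\|\le c\|y\|$ for some $c<1$. I would then define the conjugate return map $P$ (playing the role of $\varphi_0\inv\circ f^k\circ\varphi_0$) by interpolating radially between $P(y) = by$ on an inner cube $[-1,1]^d$ and $P = G_0$ near the outer boundary of the chart, with the transition confined to an annulus inside $B$. Setting $\phi := g_0\inv\circ\varphi_0\circ P\circ\varphi_0\inv$ gives a map supported in $B$, equal to the identity wherever $P = G_0$ (hence near $\partial B$, so it extends by the identity to all of $M$), and satisfying $g_0\circ\phi = \varphi_0\circ P\circ\varphi_0\inv$. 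By construction $\varphi_0\inv\circ f^k\circ\varphi_0 = P = by$ on $[-1,1]^d$, which is condition (4); condition (1) holds since $\phi$ is supported in $B\subset\{\dist(\cdot,x_0)<\ep\}$, and (3) is immediate.

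The main obstacle is arranging that the interpolated map $P$ is simultaneously a \emph{diffeomorphism} and a \emph{contraction}, since a naive convex interpolation between two contractions need be neither injective nor norm-decreasing. I would resolve this in two steps. First, because $G_0$ is $C^1$-close to the linear contraction $S$ on the transition annulus, the derivative of a reasonable interpolation is a small $C^0$-perturbation of a convex combination $(1-\beta)S + \beta\,bI$; choosing a path of \emph{invertible} linear contractions joining $S$ to $bI$ (a routine fact, using here that $\det Dg_0(x_0)>0$, which is exactly what makes the orientation-preserving model $y\mapsto by$ attainable; this holds in our applications, and otherwise one replaces $by$ by a fixed linear contraction in the orientation class of $Dg_0(x_0)$) keeps this combination invertible, so $DP$ stays invertible and $P$ is a local diffeomorphism. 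Second, to upgrade to global injectivity I would slow the radial transition, reparametrizing the radius by a diffeomorphism of the interval so that $\beta$ varies arbitrarily slowly; this makes the $\beta'$-term in $DP$ negligible and renders $P$ a $C^1$-small perturbation of the honest radial contraction $y\mapsto M(\|y\|)y$, so a Hadamard/degree argument gives that $P$ is a diffeomorphism onto its image, while the estimate $\|P(y)\|\le\max(b,c)\,\|y\|$ shows $P$, and hence the return map $g_0\circ\phi$, is a contraction.

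Finally, condition (2) follows from the locality of the modification. Outside $B$ we have $f = f_0$, so orbits avoiding $B$ are unchanged. Inside, the return map $f^k|_B$ is a contraction with unique fixed point $x_0$, so the periodic orbit of $x_0$ persists as the only recurrence meeting $B$; since $B$ lies in the basin of the sink, any non-wandering point of $f_0$ other than this orbit has an orbit disjoint from $B$ and is therefore unaffected. Hence $NW(f) = NW(f_0)$, completing the verification.
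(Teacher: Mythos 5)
Your proposal is, at bottom, the same argument the paper gives: the paper's proof is two sentences citing the standard pasting techniques of \cite{wil1972pasting} (``perturb so the return map is linear near $0$, then deform the linear map to the homothety''), and what you have written is precisely the content of that citation, made self-contained --- the localization $f=f_0\circ\phi$ with $\phi$ supported in a ball meeting the orbit of $x_0$ only near $x_0$, the radial interpolation between the return map and $by$, invertibility of the interpolated derivative via a path of invertible linear contractions, a degree argument for global injectivity, and locality for the non-wandering set. So this is not a different route; it is the unpacked one, and the unpacking is essentially correct. Two points deserve comment. First, your orientation caveat is a genuine catch and not a side remark: conclusion (4) forces $\det Df^k(x_0)=b^d>0$, while for small $\epsilon$ any $f$ satisfying (1) has $f^k=f_0^k\circ\phi$ near $x_0$ with $\phi$ a diffeomorphism of a ball equal to the identity near its boundary; such a $\phi$ fixes $x_0$ (by (3)--(4) and injectivity of $f_0^k$) and has $\det D\phi(x_0)>0$, so the sign of $\det Df^k(x_0)$ equals that of $\det Df_0^k(x_0)$. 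Hence \cref{lemma:sinkpaste} as stated is \emph{false} when $Df_0^k(x_0)$ reverses orientation, and your fix (a linear contraction in the orientation class of $Df_0^k(x_0)$ as the model) is the right one. Your parenthetical that positivity ``holds in our applications'' is not justified, though: the sinks in \cref{prop:addcu} are arbitrary, so the reflection model is genuinely needed downstream as well (fortunately the radial construction of \cref{lemma:shearup} commutes with orthogonal maps, so it adapts).

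Second, one sentence of yours would fail as literally written: on a fixed transition annulus you cannot make $\beta$ ``vary arbitrarily slowly,'' since its total variation is $1$, and the relevant error term in $DP$ has size comparable to $|\beta'(\|y\|)|\cdot\|y\|$, whose supremum is bounded below by $1/\log(r_2/r_1)$ on an annulus $r_1\le\|y\|\le r_2$. Reparametrizing within a fixed annulus gains nothing; what makes the term negligible is taking the annulus multiplicatively wide (equivalently, $\beta$ a slow function of $\log\|y\|$), i.e.\ placing the inner cube at a much smaller scale than the outer boundary of the chart. That freedom is available in your setup, so this is an imprecision rather than a fatal gap. Relatedly, after this correction $P$ is a $C^1$-small perturbation of the matrix-radial map $y\mapsto S_{\beta(\|y\|)}y$, not of a scalar radial contraction; such maps are not obviously injective on their own, but your degree argument needs only that $\det DP$ has constant sign and that $P$ agrees with the injective map $G_0$ near the boundary, so injectivity still closes.
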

\begin{proof}
    This follows from standard methods of
    pasting diffeomorphisms \cite{wil1972pasting}.
    First, one may make a $C^1$ small perturbation in order to
    assume that $\varphi \inv \circ f^k \circ \varphi$ is linear
    in a neighborhood of 0.
    Then, deform the linear map inside that neighborhood
    to get the desired homothety.
\end{proof}
Now we state what will be the inductive step in proving \cref{prop:allsinks}.

\begin{prop} \label{prop:addcu}
    Let A be a hyperbolic toral automorphism of $\bbT^D$
    with eigenvalues $\lam < 1$ and $\lam \inv > 1$,
    each of multiplicity $d = \tfrac{1}{2} D$.
    Suppose $F_0$ is a partially hyperbolic A-map
    having a base map $f_0 : M \to M$
    with $\dim M = d$
    and $x_0$ is a periodic sink
    such that the splitting is graph like at $x_0$.
    For any $\ep > 0$, there is a partially hyperbolic A-map
    $F$ such that
    \begin{enumerate}
        \item if dist(x, $x_0) > \ep$,
        then
        $F(x,v) = F_0(x,v)$
        for all $v  \in  \bbTD$;

        \item if the splitting for $F_0$ is graph like at
        $x  \ne  x_0$, then the splitting for $F$ is also
        graph like at $x$;
        and

        \item $x_0 \ti \bbTD$ is an $F$-periodic submanifold tangent to $\Ecu_F$.
    \end{enumerate}  \end{prop}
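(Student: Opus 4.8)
The plan is to perform surgery on $F_0$ in a small neighbourhood of $x_0$, splicing in the explicit local model supplied by \cref{cor:fullshearup} so that the fibre $x_0\ti\bbTD$ becomes tangent to $\Ecu$.

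First I would normalize the sink. Writing $k$ for the period of $x_0$ and choosing a contraction rate $b\in(\lam,1)$, \cref{lemma:sinkpaste} produces, after a $C^1$-small perturbation supported in the $\ep$-ball about $x_0$, a chart $\varphi$ in which $f^k$ is the linear homothety $y\mapsto by$. This perturbation leaves the non-wandering set unchanged and, being $C^1$-small, preserves the partial hyperbolicity of $F_0$. Since the surgery is entirely local and the orbit of $x_0$ meets the chart only once per $k$ iterates, I would reduce to a fixed point by passing to the first-return map: this replaces $A$ by $A^k$, so the two fibre rates become $\lam^{\pm k}$, and because $\lam^k\le\lam<b<1$ we stay within the regime $\lam^k<b<1$ demanded by \cref{cor:fullshearup}. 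Using the trivialization of the bundle and the eigenspace splitting $T\bbTD=\Eu_A\oplus\Es_A$, I identify a neighbourhood of $x_0\ti\bbTD$ with a neighbourhood of the origin in the model $\bbRd\ti\bbRd\ti\bbRd$ of \cref{cor:fullshearup}, the three factors being the unstable fibre direction, the base $T_{x_0}M$, and the stable fibre direction.

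Into this chart I would insert the model map $F(w,x,y)=(\lam^{-k}w,\,f(x),\,\lam^k y+h(x))$ of \cref{cor:fullshearup}, keeping $F=F_0$ outside the chart. Here $f$ is the radial homothety-with-bump, contracting the base at the fast rate $a<\lam^k$ near $x_0$ and at rate $b$ away from it, while $h$ shears the base only into the $\Es_A$ factor. Because \cref{cor:fullshearup} arranges $f(x)=bx$ and $h(x)=0$ on the annulus $b\le\|x\|\le1$, the spliced map matches $F_0$ on the overlap and is smooth; rescaling the chart so the support $\{\|x\|\le b^N\}$ of the modification lies within distance $\ep$ of $x_0$ yields conclusion (1). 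Conclusion (3) then follows by computing $DF^k$ along $x_0\ti\bbTD$: it exhibits the three rates $a<\lam^k<1<\lam^{-k}$, with $\Es_F$ transverse to the fibre, $\Ec_F=\Es_A$, and $\Eu_F=\Eu_A$, whence $\Ecu_F=\Es_A\oplus\Eu_A=T\bbTD$. Thus $x_0\ti\bbTD$ is an $F$-periodic submanifold tangent to $\Ecu_F$.

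To promote this to global partial hyperbolicity I would feed the plane-convergence statements \cref{cor:fullshearup}(2)--(3) into the sequence criterion \cref{thm:genchaindom}: away from $x_0$ the dominating $A$-dynamics preserves the cone fields of \cref{lemma:coneprops}, while those two items guarantee that the candidate $\Eu$- and $\Ecu$-subspaces, transported by $DF^n$, accumulate as $n\to\pm\infty$ on the known splitting over the non-wandering set, so the dominated splittings $\Eu\oplus\Ecs$ and $\Ecu\oplus\Es$ extend to all of $M\ti\bbTD$. The step I expect to be the main obstacle is conclusion (2), the preservation of graph-likeness at points $x\ne x_0$: the surgery changes the global splitting even at points far from $x_0$, so one cannot simply appeal to $F=F_0$ there. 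I would argue that the inserted shear is triangular --- it carries the base and $\Eu_A$ directions into $\Es_A$ but keeps $\Eu_A$ unstable --- so it preserves the graph structure of each of $\Eu$, $\Ecu$, $\Ecs$, $\Es$ over the corresponding $A$-directions; combining this with \cref{lemma:graphopen}, which makes the graph-like locus open and $f$-invariant, propagates graph-likeness out of the unperturbed region and shows it survives wherever $F_0$ possessed it.
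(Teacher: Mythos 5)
Your overall strategy matches the paper's: normalize the sink via \cref{lemma:sinkpaste}, splice in the model of \cref{cor:fullshearup}, extend the splittings with \cref{thm:genchaindom}, and handle conclusion (2) by propagating graph-likeness. But there is a genuine gap in your normalization step. You assert that the perturbation producing the homothety $y \mapsto by$ is ``$C^1$-small'' and therefore preserves partial hyperbolicity of $F_0$. That is not true: \cref{lemma:sinkpaste} first linearizes by a $C^1$-small perturbation, but then deforms the linear germ $Df_0^k(x_0)$ (which may be an arbitrary contraction, e.g.\ highly non-conformal or with rates far from $b$) into $b\,I$; this second deformation is local but not $C^1$-small, so openness of partial hyperbolicity does not apply. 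Nor can you invoke \cref{cor:nwineq} directly, since that corollary presupposes a globally defined invariant splitting, which the surgery destroys inside the basin. This is exactly why the paper inserts an intermediate stage: it builds $F_1$ (linear near $x_0 \ti \bbTD$, with the fiber component killed), re-proves that $F_1$ is partially hyperbolic by applying \cref{thm:genchaindom} with the old bundles $\Eu_{F_0}$, $\Ecu_{F_0}$ as candidate subspaces on a fundamental domain $\barU \sans f_1(U)$ of the basin, and checks that $F_1$ is graph like exactly where $F_0$ was. Without this stage, your later appeal to \cref{cor:fullshearup}(2)--(3) also loses its hypotheses: those items require the incoming subspaces on the annulus $b \le \|x\| \le 1$ to be graphs of linear maps with a uniform bound $\|L\| < C$, and that uniform bound is extracted from continuity and graph-likeness of the splitting of the \emph{normalized} map $F_1$ on $\barU$ --- the very facts your shortcut fails to establish.

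A second, related omission: matching the spliced model with the ambient map on the overlap requires not only $f_1^k = b\,\id$ there but also that the fiber coupling vanish near the orbit, since the model of \cref{cor:fullshearup} has trivial $h$ on the annulus. In general $h_0 \ne 0$ near $x_0$, so ``the spliced map matches $F_0$ on the overlap'' is false as stated. The paper handles this by choosing a neighborhood $U$ of the orbit inside the basin with $f_0(\barU) \subof U$ and replacing $h_0$ by a map $h_1$ that vanishes on $f(U)$ --- another non-small local deformation whose effect on the splitting is again absorbed by the \cref{thm:genchaindom} argument above. Both gaps are repairable with tools you already cite, but they must be repaired: either perform the two deformations and re-run \cref{thm:genchaindom} for the intermediate map (the paper's route), or apply \cref{thm:genchaindom} once to the final map with candidate bundles taken from $F_0$ on a fundamental domain, in which case you must separately prove that transport through the (non-small) interpolation region keeps those bundles graphs with uniformly bounded norm.
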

\begin{proof}
    This proof breaks into two steps.
    First, we deform $F_0$ to produce a partially hyperbolic map $F_1$
    which is linear in a neighborhood of $x_0 \ti \bbTD$,
    but which still has a graph like splitting at $x_0$.
    Then, we paste in the dynamics given by \cref{cor:fullshearup},
    to produce a partially hyperbolic map $F$ for which
    $\Ecu_F$ is tangent to $x_0 \ti \bbTD$.

    Let $U$ be a neighborhood of the orbit of $x_0$
    such that $\barU$ is contained in the basin of attraction
    and $f_0(\barU) \subof U$.
    Define a smooth function $h_1 : M \to \bbTD$
    such that $h_1(x) = h_0(x)$
    for all $x  \in  M \sans U$
    and $h_1(x) = 0$ for all $x  \in  f(U)$.

    Fix $b$ such that $\lam < b < 1$ where
    $\lam$ is the stable eigenvalue of $A$.
    Let $k$ denote the period of $x_0$.
    By \cref{lemma:sinkpaste},
    there is a coordinate chart
    $\varphi : [-1,1]^d \to M$
    and a diffeomorphism
    $f_1 : M \to M$
    such that
    \begin{math}
        \varphi \inv \circ f_1^k \circ \varphi (x) = b x
          \end{math}
    for all $x  \in  [-1,1]^d$.
    Moreover,
    we may freely assume that $\varphi([-1,1]^d) \subof f_0(U)$
    and that
    $f_1(x) = f_0(x)$ for all $x  \in  M \sans f_0(U)$.
    By abuse of notation,
    we identify $[-1,1]^d$ with its image
    and regard $[-1,1]^d$ as a subset of $M$.
    
    Define a diffeomorphism $F_1$ of $M \ti \bbTD$ by
    $F_1(x,v) = (f_1(x,v)$, $Av + h_1(x))$.
    If $x  \in  \barU \sans f_1(U)$ and $v  \in  \bbTD$,
    define $\Eu_{F_1}(x,v) := \Eu_{F_0}(x,v)$.
    Using \cref{thm:genchaindom},
    one may then establish the existence of a dominated splitting
    $\Eu_{F_1} \oplus \Ecs_{F_1}$
    on all of $M \ti \bbTD$.
    Similarly,
    If $x  \in  \barU \sans f_1(U)$ and $v  \in  \bbTD$,
    define $\Ecu_{F_1}(x,v) := \Ecu_{F_0}(x,v)$
    and apply the same reasoning to establish a dominated splitting of
    the form
    $\Ecu_{F_1} \oplus \Es_{F_1}$
    on all of $M \ti \bbTD$.
    From this, one may show that $F_1$ is partially hyperbolic
    and that the splitting of $F_1$ is graph like at a point $x$
    if and only if the original $F_0$ was graph like at $x$.

    Since $\Eu_{F_1}$ is continuous and graph like on $U$,
    there is a uniform constant $C > 1$
    such that if $x  \in  [-1,1]^d \subof M$
    with $b  \le  \|x\|  \le  1$ and $v  \in  \bbTD$
    then 
    $\Eu_{F_1}(x,v)$
    is the graph of a linear function $L : \Eu_A(x,v) \to T_x M \oplus \Es_A(x,v)$
    with $\|L\| < C$.
    A similar bound also holds when $\Ecu_{F_1}(x,v)$ is expressed as the graph
    of a linear function.
    By \cref{cor:fullshearup},
    there are functions $f : M \to M$
    and $h : M \to \bbTD$
    such that $F$ defined by
    $F(x,v) = (f(x)$, $Av + h(x))$
    satisfies the following properties.
    \begin{enumerate}
        \item If either $x  \in  M \sans [-1,1]^d$
        or $x  \in  [-1,1]^d$ with $\|x\| > 1$,
        then $f(x) = f_1(x)$ and $h(x) = h_1(x)$.

        \item If $x  \in  [-1,1]^d$ with $b  \le  \|x\|  \le  1$,
        then $f(x) = b x$.
        Further,
        if $\{n_j\} \subof \bbN$ is such that
        $F^{n_j}(x,v)$ converges to a point
        $(x_0,v_0)$ in $x_0 \ti \bbTD$,
        then
        $DF^{n_j}(\Eu_{F_1}(x,v))$ converges to $0 \ti \Eu_A(v_0)$
        and
        $DF^{n_j}(\Ecu_{F_1}(x,v))$ converges to $0 \ti T_{v_0} \bbTD$.
    \end{enumerate}
    Then
    \cref{thm:genchaindom}
    shows that $F$ is partially hyperbolic
    with $x_0 \ti \bbTD$ tangent to $\Ecu_{F}$.
\end{proof}
With \cref{prop:addcu} established,
\cref{prop:allsinks} easily follows.

\begin{proof}[Proof of \cref{prop:allsinks}]
    Given $f_0$, define a hyperbolic toral automorphism $A : \bbTD \to \bbTD$
    such that $F_0 := f_0 \ti A$ is partially hyperbolic.
    For instance, $A$ can be the direct product of 
    $d$ copies of a high iterate of the cat map.
    Clearly, $F_0$ is a partially hyperbolic $A$-map
    and the splitting is graph like at all points.
    Let $x_0$ be any point in $X$ and apply \cref{prop:addcu}
    to $F_0$ and $x_0$
    to produce a map $F_1$ where $x_0 \ti \bbTD$ is tangent either to
    $\Ecs$ and $\Ecu$.
    If $X$ contains a point $x_1$ which is not in the orbit of $x_0$,
    then apply \cref{prop:addcu}
    to $F_1$ and $x_1$ to produce a map $F_2$.
    After a finite number of steps of this form,
    the desired map $F$ in \cref{prop:allsinks} is constructed.
\end{proof}

\bigskip

\acknowledgement
The author thanks Christian Bonatti,
Andrey Gogolev,
Ni\-colas Gourmelon, Rafael Potrie,
and the Monash Topology Writing Group
for helpful discussions.


\bibliographystyle{alpha}
\bibliography{dynamics}

\end{document}